\documentclass{amsart}
\usepackage{amsmath,amssymb,amsthm,amscd,xypic}
\usepackage{color}
\usepackage{enumerate}
\usepackage[T1]{fontenc} 
\usepackage{hyperref}

\title{Silting, cosilting, and extensions of commutative ring}

\author{Simion Breaz, Michal Hrbek, George Ciprian Modoi}
\address[Simion Breaz and George Ciprian Modoi]{Babe\c s--Bolyai University, Faculty of Mathematics and Computer Science \\
1, Mihail Kog\u alniceanu, 400084 Cluj--Napoca, Romania}
\address[Michal Hrbek] {Institute of Mathematics of the Czech Academy of Sciences, \v{Z}itn\'{a} 25, 115 67 Prague, Czech Republic}
\email[Simion Breaz]{bodo@math.ubbcluj.ro}
\email[George Ciprian Modoi]{cmodoi@math.ubbcluj.ro}
\email[Michal Hrbek]{hrbek@math.cas.cz}

\subjclass[2010]{} \keywords{}

\thanks{ }
\date{\today}


\renewcommand{\iff}{if and only if }


\newcommand{\Z}{\mathbb{Z}}
\newcommand{\Q}{\mathbb{Q}}

\DeclareMathOperator{\Hom}{Hom} 
\DeclareMathOperator{\RHom}{{\bf R}Hom}

\DeclareMathOperator{\Ext}{Ext} 
\DeclareMathOperator{\Ker}{Ker} 
\DeclareMathOperator{\Coker}{Coker}

\newcommand\lotimes{\otimes^{\mathbf L}}

\newcommand{\A}{\mathcal{A}}

\newcommand{\I}{\mathcal{I}}
\newcommand{\CL}{\mathcal{L}}
\newcommand{\CK}{\mathcal{K}}
\newcommand{\Y}{\mathcal{Y}}
\newcommand{\C}{\mathcal{C}}
\newcommand{\D}{\mathcal{D}}

\newcommand{\U}{\mathcal{U}}
\newcommand{\X}{\mathcal{X}}
\newcommand{\V}{\mathcal{V}}
\newcommand{\W}{\mathcal{W}}

\newcommand{\CS}{\mathcal{S}}
\newcommand{\CZ}{\mathcal{Z}}
\newcommand{\CE}{\mathcal{E}}
\newcommand{\bp}{\mathfrak{p}}
\newcommand{\bq}{\mathfrak{q}}

\newcommand{\CX}{\mathcal{X}}
\newcommand{\CY}{\mathcal{Y}}


\newcommand{\ModR}{\hbox{\rm Mod-}R}

\newcommand{\Qcoh}{\hbox{\rm Qcoh-}}

\newcommand{\op}{^\mathrm{op}}
\newcommand{\Mod}[1]{\hbox{\rm Mod-}{#1}}
\newcommand{\add}{\mathrm{add}}
\newcommand{\Add}{\mathrm{Add}}

\newcommand{\Prod}{\mathrm{Prod}}

\newcommand{\Spec}[1]{\mathrm{Spec}({#1})}
\newcommand{\Dr}{\mathbf{D}}
\newcommand{\Der}[1]{\mathbf{D}({#1})}

\newcommand{\Loc}{\mathrm{Loc}}

\newcommand{\susp}{\mathrm{susp}\,}
\newcommand{\cosusp}{\mathrm{cosusp}\,}

\newcommand{\hocolim}{\mathrm{hocolim}}


\theoremstyle{plain}
\newtheorem{thm}{Theorem}[section]
\newtheorem{lemma}[thm]{Lemma}
\newtheorem{prop}[thm]{Proposition}
\newtheorem{cor}[thm]{Corollary}

\newtheorem{thm-intro}{Theorem}

\theoremstyle{definition}

\theoremstyle{remark}
\newtheorem{rem}[thm]{Remark}

\newtheorem{setting}{Setting}

\begin{document}


\begin{abstract}
We study the transfer of (co)silting objects in derived categories of module categories via the extension functors induced by a morphism of commutative rings. It is proved that the extension functors preserve (co)silting objects of (co)finite type. In many cases the bounded silting property descends along faithfully flat ring extensions. In particular, the notion of bounded silting complex is Zariski local.
\end{abstract}

\subjclass[2010]{13D09, 16E35, 18G10, 13D30, 13C12, 
16D40, 18G80}

\keywords{silting object, cosilting object, ring extension, ascent-descent property}

\maketitle

\section{Introduction}

Silting theory  provides useful tools in the study of various triangulated categories. We refer to \cite{An-19} and \cite{NSZ} for details about the influence of this. In particular, it studies $t$-structures with special properties that provide good approximations.  In the case of the unbounded derived category of a module category these $t$-structures are induced by some objects (called silting, respectively cosilting, objects) that can be interpreted as generalizations of tilting complexes (in particular, they also generalize the $n$-tilting modules), \cite{AMV:2015},  \cite{Wei-isr}. In \cite{An-Hr} it is shown that if we are in the derived category of a commutative ring, the silting complexes of length $2$ are related with other structures associated to that ring (Gabriel filters, torsion theories of finite type, the spectrum, cf. \cite{Ga-Pr}. Moreover, it was proved in \cite{AJS}, \cite{AH21}, \cite{H20} and  \cite{HHZ21} that there exists a strong connection between the spectrum of the ring and the $t$-structures associated to (co)silting objects in the unbounded derived category. 

Since we have such a connection, it is natural to ask if the silting property is Zariski local or, even more, if it is an ad-property (see \S \ref{subsec-locality} for the relevant definitions). It is already known that the properties of being projective,  $1$-tilting and $2$-silting are ad-properties (cf. \cite{RG}, \cite{HST20}, and \cite{Br20}). The Zariski  locality for $n$-tilting modules was also proved in \cite{HST20}. In order to provide answers for this question, we will study transfers, induced by a morphism $\lambda:R\to S$ of commutative rings, of the (co)silting properties of objects from $\Der R$ or $\Der S$. More precisely, we will consider the algebraic transfer that uses the derived functors $-\lotimes_R S:\Der R\to \Der S$ and $\RHom_R(S,-):\Der R\to \Der S$  of the covariant, respectively contravariant, extension functors associated to $\lambda$, and the topological transfer (for cosilting objects of cofinite type) via the canonical map $\lambda^\star:\Spec S\to \Spec R$. In this way we continue the investigations realized in \cite{HST20} for the case of tilting modules and in \cite{Br20} for silting complexes of length $2$. For other contributions in the study of the transfer of some related properties via ring extensions we refer to the already mentioned paper \cite{RG} (projective modules), but also to \cite{CK16} (injective modules), \cite{Mi95} (compact tilting complexes) and \cite{To05} ($n$-tilting and $n$-cotilting modules). 

We will prove in Theorem \ref{ascend} that the silting property \textit{ascends} along $\lambda$  (i.e., the functor $-\lotimes_R S$ preserves silting objects), and that the same is true for pure-injective cosilting objects via the functor $\RHom_R(S,-)$. Moreover, these functors preserve the (co)finiteness type properties, see Theorem \ref{new-ascend-(co)finite}. For (co)silting objects of (co)finite type we can also apply a topological transfer since $\lambda^\star$ is continuous with respect to Hochster's topology, Theorem \ref{topological-transfer}. In fact, this transfer coincides up to equivalence to that realized by using the derived extension functors. Moreover, if $\lambda$ is faithfully flat then $\lambda^\star$ is also closed, and this allows us to identify all (co)silting complexes of (co)finite type from $\Der S$ that can be obtained, up to equivalence, by ascending a (co)silting object from $\Der R$. 

On the other side, as in the case of $n$-tilting modules (cf. \cite{HST20}), it is not clear if the descent of the (co)silting property is valid for all commutative rings. More precisely, the (co)silting property \textit{descends} with respect to a faithfully flat morphism $\lambda:R\to S$ of commutative rings if the functor $-\lotimes_R S$ reflects the silting objects (respectively, $\RHom_R(S,-)$ reflects the cosilting objects). In the last part of the paper we study the property for bounded (co)silting complexes. Even though we are not able to prove the descent property for all bounded complexes over commutative rings, we can indicate many situations when it is valid. We prove in Theorem \ref{a-cosilting} that the cosilting property descends for complexes that are duals of complexes of projectives. Note that this result is satisfactory for noetherian rings since in this case all bounded cosilting complexes have this form, up to equivalence. For the silting case, we prove in Corollary \ref{silt-zar-loc} that the bounded silting property is Zariski local. Moreover, we provide in Remark \ref{list-descend} an extensive list of rings for which the silting property descends for bounded complexes of projectives, cf. Proposition \ref{descent-ff-gen}. This list includes all noetherian rings, all rings that are of finite injective dimension or finite pure global dimension, in particular all rings of cardinality $\aleph_n$ for some integer $n \geq 0$. Also, the $n$-tilting property descends for these rings (Corollary \ref{tilt-descent-ff-gen}).

Unfortunately, we were not able to prove that the properties used in Lemma \ref{char-silt} to characterize (co)silting objects in $\Der R$ descend along faithfully flat morphisms (in \cite{Br20} and \cite{HST20} it is proved that these properties descend together for complexes of length $2$ and for $1$-tilting modules, but the proofs cannot be extended to general bounded complexes). In order to avoid this obstruction, we use two results that can be of independent interest. More precisely, in Theorem \ref{char-silt-bounded} it is proven that for bounded silting complexes the closure of the class $T^{\perp_{>0}}$ with respect to direct sums can be replaced by the weaker condition $\Add(T)\subseteq T^{\perp_{>0}}$. This generalizes a recent result proved for $n$-tilting modules by Positselski and \v S\v tov\'\i\v cek \cite[\S 2]{PoS}. The other one is presented in Theorem \ref{loc-pure-inj}, where it is proved that if $R$ is a commutative ring such that $\Der R$ coincides with its smallest localizing subcategory that contains all pure-injective objects then for every injective morphism of rings $\lambda:R\to S$ that is pure, the smallest localizing subcategory of $\Der R$ that contains $S$ is $\Der R$. 

Although in this paper we are mostly interested in commutative rings, for the descending property of silting objects we also need to consider a non-commutative setting: Marks and Vit\'oria had notice in \cite{MV18} that every bounded silting complex can be interpreted as a tilting module in the category of representations of a Dynkin quiver. Therefore, in Section \ref{sect-silt-cosilt} the results will be presented for general rings. In Section \ref{sect-ascend} we study the ascent properties for silting and cosilting for some morphisms that are induced by a ring morphism whose domain is commutative (note that for general rings the property to be silting does not ascend, e. g. \cite[Example 2.5]{Br20}). The next two sections are dedicated to extensions induced by morphisms of commutative rings. In Section \ref{Sect-cos-com} we study the transfer of (co)silting complexes of cofinite type by using both the algebraic and topological transfers mentioned above. In the last section we study the descent property for bounded silting complexes.

If $R$ is a ring then $\ModR$ will denote the category of all right $R$-modules, and $\Der R$ will be the unbounded derived category of $\ModR$. We recall that $\Der R$ is a triangulated category, and we will denote by ${\_}[1]$ the shift. The $i$-th homology functor is denoted by $H^i:\Der R\to Ab$.


\section{Preliminaries}

\subsection{Derived functors} Let $R$ and $S$ be two rings and let $X\in\Der {R\op\otimes_\Z S}$ be a complex of $(R,S)$-bimodules. Then the derived tensor product \[-\lotimes_RX:\Der R\to\Der S\] is the left adjoint of the derived hom functor \[\RHom_S(X,-):\Der S\to\Der R.\]
The link between the right derived hom functor and the usual hom functor is expressed by the natural isomorphism  
\[H^i\RHom_R(X,Y)=\Hom_{\Der R}(X,Y[i]).\]
Moreover the adjunction isomorphism 
\[\Hom_{\Der S}(-\lotimes_RX,-)\cong\Hom_{\Der R}(-,\RHom_S(X,-))\]
has an enriched version 
\[\RHom_S(-\lotimes_RX,-)\cong\RHom_R(-,\RHom_S(X,-)),\] the initial isomorphism being recovered by taking $H^0$.  
Letting $X$ to run over all objects in  $\Der {R\op\otimes_\Z S}$ we get two bifunctors. In particular, {if $R$ is commutative, }the derived tensor product induces a monoidal structure on the category $\Der R$, the unit being $R$ itself, and the internal hom being $\RHom_R(-,-)$.

Using these functors one can define two dualities on $\Der R$, namely \[(\ )^*:=\RHom_R(-,R):\Der R\to\Der{R\op}\] and \[(\ )^+:=\RHom_\Z(-,\Q/\Z):\Der R\to\Der{R\op}.\]
We denote by the same symbols the respective dualities going in the other sense, namely $\Der{R\op}\to\Der R$. 

We record some basic properties of these functors. Recall that an object $C\in \Der R$ is \textit{compact} if the covariant functor $\Hom_{\Der R}(C,-)$ commutes with respect to direct sums (this definition is valid for general triangulated categories). In the case of $\Der R$ the compact objects are those objects that are isomorphic to bounded complexes of finitely generated projective modules.  

\begin{lemma}\cite[Lemma 21]{AH21}. Let $X,C\in\Der R$ and $Y\in\Der{R\op}$, with $C$ compact. Then:
\begin{enumerate}
    \item For any $n\in\Z$ we have $H^n(X^+)\cong H^{-n}(X)^+$ and $H^n(X^+)=0$ \iff $H^{-n}(X)=0$.
    \item There are natural isomorphisms
    \[\RHom_{R\op}(Y,X^+)\cong(X\lotimes_RY)^+\cong\RHom_R(X,Y^+),\]
    \[\RHom_R(C,X)\cong X\lotimes_RC^*,\]
    and 
    \[\RHom_R(C,X)^+\cong C\lotimes_RX^+.\]
\end{enumerate}
\end{lemma}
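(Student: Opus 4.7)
The proof has two largely independent parts; throughout I would be careful about the left/right module conventions implicit in the notation, since that is the only real trap.

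For part (1), the key input is that $\Q/\Z$ is an injective cogenerator of $\Ab$. Injectivity means that $\Hom_\Z(-,\Q/\Z)$ is exact on $\ModR$, so it already computes its own right derived functor: after fixing a K-injective or just any representative $X$, the complex $X^+=\RHom_\Z(X,\Q/\Z)$ is isomorphic to the termwise dual $\Hom_\Z(X,\Q/\Z)$ (with the usual sign/indexing flip). Exactness then lets the dualization commute with taking cohomology, yielding $H^n(X^+)\cong H^{-n}(X)^+$. Because $\Q/\Z$ is a cogenerator of $\Ab$, an abelian group $A$ vanishes iff $A^+=\Hom_\Z(A,\Q/\Z)=0$; combined with the previous isomorphism this gives the vanishing statement.

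For the first pair of isomorphisms in part (2), I would just apply the enriched adjunction recalled in the excerpt,
\[\RHom_S(-\lotimes_R X,-)\cong\RHom_R(-,\RHom_S(X,-)),\]
specialized to $S=\Z$ with second slot $\Q/\Z$. Plugging in $X\in\Der R$ and $Y\in\Der{R\op}$ in the first slot gives $(X\lotimes_R Y)^+\cong\RHom_R(X,Y^+)$ on one side. The symmetric iso $(X\lotimes_R Y)^+\cong\RHom_{R\op}(Y,X^+)$ comes from the same adjunction, now viewing $X\lotimes_R Y$ as $Y\lotimes_{R\op}X$ and reading off the other variance.

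The identity $\RHom_R(C,X)\cong X\lotimes_R C^*$ for compact $C$ is the usual perfect-complex duality. I would construct the natural evaluation map, check directly that it is an isomorphism when $C=R$ (both sides are $X$), and note stability under shifts, direct summands, and triangles. Since the compact objects of $\Der R$ are exactly the thick closure of $R$ (i.e.\ perfect complexes), a devissage argument propagates the iso to all compact $C$. Finally, the last formula is obtained by bookkeeping from the previous ones:
\[\RHom_R(C,X)^+\cong(X\lotimes_R C^*)^+\cong\RHom_{R\op}(C^*,X^+)\cong X^+\lotimes_{R\op}C^{**}\cong C\lotimes_R X^+,\]
where the second iso is the first bullet of part (2), the third is the perfect-complex iso applied in $\Der{R\op}$ (using that $C^*$ is compact when $C$ is), and the last uses the biduality $C^{**}\cong C$ for perfect complexes.

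The only place where a genuine argument (rather than a substitution in the adjunction) is needed is the step $\RHom_R(C,X)\cong X\lotimes_R C^*$ and its biduality consequence, so I would plan to spend the bulk of the verification there, while the rest reduces to chasing the conventions on $R$ vs.\ $R\op$ and making sure the two sides of each natural isomorphism land in the correct category.
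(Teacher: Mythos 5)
The paper does not actually prove this lemma; it is stated verbatim as a citation of \cite[Lemma 21]{AH21}, with no proof given in the present text. So there is no in-paper argument to compare against, and your task is really a reconstruction of the argument from \cite{AH21}.

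That said, your reconstruction is correct and is the standard one. Part (1) is exactly right: injectivity of $\Q/\Z$ makes $\RHom_\Z(-,\Q/\Z)$ computable termwise, so it commutes with cohomology up to the degree flip, and the cogenerator property of $\Q/\Z$ gives the vanishing equivalence. For part (2), specializing the enriched tensor-hom adjunction at $S=\Z$ and second slot $\Q/\Z$ (with $Y$ resp.\ $X$ supplying the bimodule) yields the two outer isomorphisms to $(X\lotimes_RY)^+$, and the devissage argument for $\RHom_R(C,X)\cong X\lotimes_RC^*$ (natural evaluation map, iso for $C=R$, thick closure) is the usual perfect-complex duality. Your final chain
\[
\RHom_R(C,X)^+\cong(X\lotimes_R C^*)^+\cong\RHom_{R\op}(C^*,X^+)\cong X^+\lotimes_{R\op}C^{**}\cong C\lotimes_R X^+
\]
is valid once one keeps track of which side the module structures sit on; the only ingredients you use beyond the earlier isomorphisms are that $C^*$ is again compact and that $C\to C^{**}$ is an isomorphism on perfect complexes, both of which follow from the same thick-closure devissage. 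I would only suggest writing out the bimodule structures explicitly in that chain (e.g.\ $C^*\in\Der{R\op}$, $X^+\in\Der{R\op}$, $C^{**}\in\Der R$) so the tensor over $R\op$ and the final commutation $X^+\lotimes_{R\op}C^{**}\cong C^{**}\lotimes_R X^+$ are visibly well-formed, since that is exactly the trap you flagged at the outset.
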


\subsection{Local properties of objects of $\Der R$}\label{subsec-locality} Let $\mathfrak{P}$ be some property which can be satisfied by an object of the unbounded derived category of a commutative ring. Given a commutative ring $R$, let $\mathfrak{P}_R$ denote the full subcategory of objects of $\Der R$ satisfying $\mathfrak{P}$. 

In general, it is possible to extend the consideration of the property $\mathfrak{P}$ from the affine setting of modules over commutative rings to the non-affine setting of the category $\Qcoh X$ of quasi-coherent sheaves over a scheme $X$ in the following way. Given a scheme $X$, let $\Der X = \Der {\Qcoh X}$ denote the unbounded derived category of the category of quasi-coherent sheaves over $X$. Let $\mathcal{O}_X$ denote the structure sheaf on $X$. For any open subset $U \subseteq X$ we have the module of sections functor ${\_}(U): \Qcoh X \to \Mod {\mathcal{O}_X(U)}$. If $U$ is open affine then this functor is exact \cite[Lemma 30.2.2.]{stacks}, and so it extends naturally to a triangle functor ${\_}(U): \Der X \to  \Der{\mathcal{O}_X(U)}$. Then we say that an object $\mathcal{M} \in \Der X$ satisfies the property $\mathfrak{P}$ if the section object $\mathcal{M}(U) \in \Der{\mathcal{O}_X(U)}$ belongs to $\mathfrak{P}_{\mathcal{O}_X(U)}$ for {\em any} open affine subset $U$ of $X$.

Well-behaved properties of quasi-coherent sheaves are expected to be verifiable on any choice of open affine cover of $X$. A property $\mathfrak{P}$ is therefore called {\em (Zariski) local} if for any scheme $X$, any open affine cover $X = \bigcup_{i \in I}U_i$, and any $\mathcal{M} \in \Der X$, we have that $\mathcal{M}$ satisfies $\mathfrak{P}$ whenever $\mathcal{M}(U_i) \in \mathfrak{P}_{\mathcal{O}_X(U_i)}$ for all $i \in I$. Thanks to the following result, called the {\em Affine Communication Lemma}, there is a purely module-theoretic criterion sufficient to check that a property is local.

\begin{lemma}\label{affine-communication}
   Let $\mathfrak{P}$ be a property of objects of $\Der R$ where $R$ runs over commutative rings. Assume the following two conditions for any commutative ring $R$:
   \begin{enumerate}
       \item If $X \in \Der R$ satisfies $\mathfrak{P}$ then $X \lotimes_R R[f^{-1}] \in \mathfrak{P}_{R[f^{-1}]}$ for any element $f \in R$,
       \item Let $f_0,f_1,\ldots,f_{n-1} \in R$ be elements such that $R = \sum_{i<n}f_i R$ and consider the ring homomorphism $R \to S := \prod_{i<n}R[f_i^{-1}]$. Then for any $X \in \Der R$, if $X \lotimes_R S \in \mathfrak{P}_S$ then $X \in \mathfrak{P}_R$.
   \end{enumerate}
   Then the property $\mathfrak{P}$ is local.
\end{lemma}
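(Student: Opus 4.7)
The plan is to adapt the classical \emph{Affine Communication Lemma} of algebraic geometry to the derived setting. Fix an open affine cover $X = \bigcup_{i \in I}U_i$ and $\mathcal{M} \in \Der{X}$ with $\mathcal{M}(U_i) \in \mathfrak{P}_{\mathcal{O}_X(U_i)}$ for every $i$. For an arbitrary open affine $V \subseteq X$ with $R := \mathcal{O}_X(V)$ the goal is to conclude $\mathcal{M}(V) \in \mathfrak{P}_R$. The strategy has three steps: (i) topologically refine so that each point of $V$ sits inside a common basic open of both $V$ and of some $U_i$; (ii) transfer $\mathfrak{P}$-membership from the $U_i$-side to the $V$-side via condition~(1); (iii) conclude by quasi-compactness and condition~(2).

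For step~(i), given $p \in V$ pick $i$ with $p \in U_i$, choose $f \in R$ with $p \in D_V(f) \subseteq V \cap U_i$, and then inside $D_V(f) = \Spec{R[f^{-1}]}$ pick a basic open $D_{U_i}(g) \ni p$ of $U_i$ contained in $D_V(f)$. Writing the image of $g$ under the restriction $\mathcal{O}_X(U_i) \to \mathcal{O}_X(D_V(f)) = R[f^{-1}]$ as $h/f^k$ for some $h \in R$ and clearing denominators yields $D_{U_i}(g) = D_V(fh)$, a basic open of $V$ that is simultaneously a basic open of $U_i$.

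For step~(ii), I invoke the standard compatibility that whenever $D_{V'}(h)$ is a basic open of an open affine $V' \subseteq X$, there is a natural isomorphism $\mathcal{M}(D_{V'}(h)) \cong \mathcal{M}(V') \lotimes_{\mathcal{O}_X(V')} \mathcal{O}_X(V')[h^{-1}]$ (classical for quasi-coherent sheaves, extended to $\Der{X}$ by resolving $\mathcal{M}$). Applying this twice, once inside $V$ and once inside $U_i$, together with condition~(1) applied to $\mathcal{M}(U_i) \in \mathfrak{P}_{\mathcal{O}_X(U_i)}$ on the $U_i$-side, one obtains $\mathcal{M}(V) \lotimes_R R[(fh)^{-1}] \cong \mathcal{M}(D_V(fh)) \in \mathfrak{P}_{R[(fh)^{-1}]}$. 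Quasi-compactness of $\Spec{R}$ then extracts finitely many such basic opens $D_V(f_0), \ldots, D_V(f_{n-1})$ covering $V$, so $R = \sum_{j<n}f_j R$.

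For step~(iii), set $S := \prod_{j<n}R[f_j^{-1}]$. Since the index set is finite, $S$ is also a finite coproduct of $R$-algebras, and the derived tensor product distributes to give $\mathcal{M}(V) \lotimes_R S \cong \bigoplus_{j<n} \mathcal{M}(V) \lotimes_R R[f_j^{-1}]$, with each summand in $\mathfrak{P}$ by step~(ii). Under the natural equivalence $\Der{S} \cong \prod_{j<n}\Der{R[f_j^{-1}]}$, membership in $\mathfrak{P}_S$ matches componentwise $\mathfrak{P}$-membership (the forward direction is forced by condition~(1) applied to the idempotent $e_j \in S$ with $S[e_j^{-1}] \cong R[f_j^{-1}]$), so $\mathcal{M}(V) \lotimes_R S \in \mathfrak{P}_S$, and condition~(2) yields $\mathcal{M}(V) \in \mathfrak{P}_R$. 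The principal subtleties are the compatibility of the derived section functor with derived localization in step~(ii), and the componentwise detection of $\mathfrak{P}$ over a finite product ring in step~(iii) --- the latter is natural for all intrinsic properties of derived modules but is not explicitly recorded in the hypotheses.
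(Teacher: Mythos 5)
Your proof follows the standard Affine Communication Lemma argument, which is exactly what the paper invokes via \cite[Lemma 3.4]{EGT} and \cite[Lemma 2.1]{HST20}: steps (i) and (ii) are correct (the identification $D_{U_i}(g)=D_V(fh)$, the compatibility of the section functor with localization on basic opens, and the transfer across the common basic open using condition~(1)), and step (iii) reduces to condition~(2) as intended. The subtlety you flag is genuine. Conditions~(1) and~(2) as stated do not logically imply componentwise detection of $\mathfrak{P}$ over a finite product ring $S=\prod_{j<n}R[f_j^{-1}]$: condition~(1), applied to the localizations at the central idempotents $e_j$ of $S$, gives only the forward implication, and condition~(2), applied over $S$ with the $e_j$ as the generating set, degenerates to a tautology because $\prod_j S[e_j^{-1}]\cong S$. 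So no combination of the two hypotheses yields the needed converse.

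This is therefore a gap in the abstract formulation of the lemma rather than in your argument. A per-component form of condition~(2) --- namely, that $X\lotimes_R R[f_j^{-1}]\in\mathfrak{P}_{R[f_j^{-1}]}$ for all $j<n$ implies $X\in\mathfrak{P}_R$ --- removes the issue entirely; it is the form found in other standard presentations of the Affine Communication Lemma, and it is precisely what your steps (ii) and (iii) feed into. In all of the paper's applications (bounded complexes of projectives, silting, cosilting) the two formulations coincide, because $\Der S$ decomposes as $\prod_{j<n}\Der{R[f_j^{-1}]}$ and each property under consideration is visibly detected componentwise under this equivalence. But for the lemma to hold for an arbitrary property $\mathfrak{P}$ one should either state condition~(2) per-component, or add componentwise detection of $\mathfrak{P}$ over finite products of rings as an explicit hypothesis; you were right to single this out.
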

\begin{proof}
   The proof is completely analogous to the non-derived version in \cite[Lemma 3.4]{EGT}, see also \cite[Lemma 2.1]{HST20}.
\end{proof}
It is often the case that a local property satisfies a stronger variant than the conditions of Lemma~\ref{affine-communication}. Following \cite[Definition 3.5]{EGT}, we say that a property $\mathfrak{P}$ is an {\em ad-property (=ascent-descent property)} if the following two conditions hold for any commutative ring $R$ and $X \in \Der R$:
\begin{enumerate}
       \item If $X \in \mathfrak{P}_R$ then $X \lotimes_R S \in \mathfrak{P}_{S}$ for any flat ring morphism $R \to S$,
       \item If $X \lotimes_R S \in \mathfrak{P}_S$ then $X \in \mathfrak{P}_R$ for any faithfully flat ring morphism $R \to S$.
\end{enumerate}

\subsection{Phantoms} 
We say that a morphism $\varphi:U\to V$ in $\Der R$ is a \textit{phantom} if for every compact object $C\in\Der R$ we have $\Hom_{\Der R}(C,\varphi)=0$, \cite{krause-00}. A triangle $X\to Y\to Z\overset{\delta}\to X[1]$ is called \textit{pure} if $\delta$ is a phantom. 

\begin{rem}
The notion of phantom is used in many triangulated categories. For instance, in the homotopy category of spectra a map $f$ is a phantom if and only if all maps induced by $f$ on homologies are zero. In the case of derived categories of modules this property is not true. In order to see this, we consider in  $\Der \Z$ the morphism $\varphi$ induced by the following morphism of complexes
$$\xymatrix{\cdots\ar[r]& 0\ar[r]\ar[d] & \Z\ar[r]^{2\cdot\_} \ar[d]^{1_\Z} & \Z \ar[r]\ar[d] & 0 \ar[r]\ar[d] & \cdots \\
\cdots\ar[r]& 0\ar[r] & \Z\ar[r] & 0 \ar[r] & 0 \ar[r] & \cdots}
.$$
It is easy to see that $H^n(\varphi)=0$ for all $n\in\Z$. But $\varphi\neq 0$ (since it is not equal to 0 in the homotopy category), so it is not a phantom since its domain is a compact object.
\end{rem}

\begin{lemma}\cite[Lemma 2.6]{AH21}\label{dual-phantom}
A morphism $\varphi:U\to V$ in $\Der R$ is a phantom if and only if $\varphi^+=0$. 
\end{lemma}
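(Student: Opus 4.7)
The plan is to translate both sides of the biconditional into conditions on the natural transformation $H^0(\varphi \lotimes_R -)\colon \Der{R\op}\to \Ab$, and then invoke compact generation of $\Der{R\op}$. First I would use part (2) of the preceding lemma: the natural isomorphism $\RHom_{R\op}(W, X^+) \cong (X \lotimes_R W)^+$, applied to $X=\varphi$ and any $W\in \Der{R\op}$, yields upon taking $H^0$ and using part (1) of the same lemma that
\[
\Hom_{\Der{R\op}}(W,\varphi^+) \;\cong\; H^0(\varphi\lotimes_R W)^+.
\]
Because $\Q/\Z$ is an injective cogenerator of $\Ab$, Pontryagin duality is faithful on abelian groups, so the left-hand side vanishes precisely when $H^0(\varphi\lotimes_R W)=0$. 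In particular, $\varphi^+=0$ is equivalent to $H^0(\varphi\lotimes_R W)=0$ for every $W\in\Der{R\op}$.

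Next I would do the same for the phantom condition. For compact $C\in\Der R$, the dual $D:=C^*\in\Der{R\op}$ is again compact, and the isomorphism $\RHom_R(C,X)\cong X\lotimes_R C^*$ from part (2) of the lemma gives $\Hom_{\Der R}(C,\varphi)\cong H^0(\varphi\lotimes_R D)$. Since every compact of $\Der{R\op}$ arises as $C^*$ for some compact $C\in \Der R$, $\varphi$ is a phantom iff $H^0(\varphi\lotimes_R D)=0$ for every compact $D\in\Der{R\op}$. The direction $(\Leftarrow)$ of the lemma is then immediate.

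For the direction $(\Rightarrow)$ the task is to promote vanishing of $H^0(\varphi\lotimes_R -)$ on compacts to vanishing on all of $\Der{R\op}$. The plan is to use that $\Der{R\op}$ is compactly generated, so any $W\in\Der{R\op}$ can be written as a filtered homotopy colimit $W=\hocolim_\alpha D_\alpha$ of compact objects (for instance by choosing a K-projective resolution and expressing it as the filtered colimit of its bounded perfect subcomplexes). Since $\varphi\lotimes_R -$ is a left adjoint in each variable it preserves homotopy colimits, so $\varphi\lotimes_R W\cong \hocolim_\alpha(\varphi\lotimes_R D_\alpha)$. Filtered colimits of abelian groups are exact, hence $H^0$ commutes with the filtered homotopy colimit on the level of morphisms, giving
\[
H^0(\varphi\lotimes_R W) \;=\; \colim_\alpha H^0(\varphi\lotimes_R D_\alpha) \;=\; 0,
\]
where the last equality uses that a filtered colimit of zero morphisms of abelian groups is zero.

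The main step that needs care is precisely this last one: one must use that every object of $\Der{R\op}$ is a \emph{filtered} homotopy colimit of compacts (not merely a sequential homotopy colimit built from compacts by extensions), because it is the filteredness that lets $H^0$ commute with the colimit and that makes the colimit of zero maps equal to zero. With this point understood, the reduction encapsulated in the first two paragraphs closes the argument.
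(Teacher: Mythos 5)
Your reduction of both sides to the vanishing of $H^0(\varphi\lotimes_R-)$, on all of $\Der{R\op}$ and on compacts respectively, is correct, and the easy implication follows as you say. The harder direction is where the gap lies: the parenthetical justification you offer for the key claim, namely that a K-projective resolution of $W$ is a filtered colimit of its bounded perfect subcomplexes, is false as stated. A bounded, finitely generated subcomplex of a complex of projective modules is a complex of finitely generated modules but not, in general, of projective modules, so such subcomplexes need not be compact in $\Der{R\op}$. The correct remedy is to take a semifree (cellular) resolution $P$ of $W$ and consider the finite sub-cell-complexes, i.e.\ the sub-dg-modules spanned by finite sets of generating cells closed under taking the cells appearing in each differential; these really are bounded complexes of finitely generated free modules, they form a filtered system, and their chain-level colimit is $P$. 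Since $P$ and all its sub-cell-complexes are K-flat, the filtered colimit genuinely computes the derived tensor in each variable, and your exactness-of-$H^0$ argument then closes the proof. So the route you sketch is salvageable, but the specific way you try to realize $W$ as a filtered colimit of compacts does not work and must be replaced.

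There is also a shorter alternative, closer in spirit to the argument of \cite[Lemma~2.6]{AH21}, which avoids the colimit step entirely. Your second paragraph already shows that $\varphi$ is phantom in $\Der R$ if and only if $\Hom_{\Der{R\op}}(D,\varphi^+)=0$ for every compact $D$ of $\Der{R\op}$, i.e.\ if and only if $\varphi^+$ is phantom in $\Der{R\op}$. But the target $U^+$ of $\varphi^+$ is pure-injective (any object of the form $X^+$ is, since $X^+\to X^{+++}$ splits by a triangle identity), and a phantom with pure-injective target must vanish: completing it to a triangle yields a pure triangle, pure-injectivity forces the first map of that triangle to split, and hence the connecting phantom is zero. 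Therefore $\varphi^+=0$, giving the harder implication without appealing to any presentation of $W$ as a colimit of compacts.
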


Assume that $\lambda:R\to S$ is a flat morphism of commutative rings and that $A$ is an $R$ algebra such that $R$ is central. We consider the $S$-algebra $B=A\otimes_R S$. Applying the functor $-\lotimes_R S=-\otimes_A B:\Der{A}\to \Der B$, we obtain a natural morphism (of $R$-modules) $\Hom_{\Der{A}}(C,Y)\to \Hom_{\Der{B}}(C\lotimes_R S,Y\lotimes_R S)$ that induces a natural morphism of $S$-modules $$\Xi:\Hom_{\Der{A}}(C,Y)\otimes_R S\to \Hom_{\Der{B}}(C\lotimes_R S,Y\lotimes_R S).$$ These morphisms are also described in \cite[Lemma 3]{Zim}.

The proof of the following lemma follows the lines of the proofs presented in \cite[Lemma 2.2]{NT}, \cite[Lemma 3]{Zim} for noetherian rings, where $C$ may be a complex of finitely generated modules. 

\begin{lemma} \label{lemma-zim}
Let $\lambda:R\to S$ be a flat morphism of commutative rings. Suppose that $A$ is an $R$-algebra such that $R$ is central and that $B=A\otimes_R S$. If $C,X\in\Der{A}$are two complexes such that $C$ is compact, the natural morphism $$\Xi:\Hom_{\Der{A}}(C,X)\otimes_R S\to \Hom_{\Der{B}}(C\lotimes_R S,X\lotimes_R S)$$ is an isomorphism.
\end{lemma}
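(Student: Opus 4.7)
The plan is to do a dévissage on $C$, reducing to the base case $C = A$ and using the standard fact that the compact objects of $\Der{A}$ form the smallest thick subcategory containing $A$. For fixed $X \in \Der{A}$, I would let $\mathcal{C}$ denote the full subcategory of $\Der{A}$ consisting of those $C$ for which $\Xi$ is an isomorphism. Both the source $\Hom_{\Der{A}}(-, X) \otimes_R S$ and the target $\Hom_{\Der{B}}(- \lotimes_R S,\, X \lotimes_R S)$ of $\Xi$ are contravariant cohomological functors with values in $\Mod S$: the former because $\Hom_{\Der{A}}(-, X)$ is cohomological and $-\otimes_R S$ is exact by flatness of $\lambda$, and the latter because $-\lotimes_R S$ is triangulated and the ordinary $\Hom$ is cohomological. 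By the naturality of $\Xi$ together with the five-lemma, $\mathcal{C}$ is closed under triangles, shifts, and direct summands, hence it is a thick subcategory of $\Der{A}$.

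For the base case $C = A$, flatness of $\lambda$ ensures that $S$ is K-flat over $R$, so $A \lotimes_R S = A \otimes_R S = B$; thus the source and target of $\Xi$ become $H^0(X) \otimes_R S$ and $H^0(X \lotimes_R S)$ respectively. Flatness of $S$ over $R$ yields the natural isomorphism $H^0(X) \otimes_R S \cong H^0(X \otimes_R S) = H^0(X \lotimes_R S)$, and unwinding the construction of $\Xi$ on a representing morphism $f : A \to X$ shows that $\Xi$ is precisely this canonical map. Hence $A \in \mathcal{C}$, and since $\mathcal{C}$ is thick and contains $A$, it contains every object in the thick subcategory generated by $A$, i.e., every compact object of $\Der A$.

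The main subtlety I anticipate is just the identification of $\Xi$ with the canonical map in the base case, which requires tracing the definitions of the derived tensor product and the $\Hom$-pairing carefully; the rest of the argument---cohomological functors, the five-lemma, and thick subcategory generation---is standard.
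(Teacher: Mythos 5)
Your proof is correct, and it takes a mildly but genuinely different route from the paper's. The paper first replaces $C$ by a bounded complex of finitely generated projective modules and then performs an explicit induction on the length of that complex, using brutal truncations to get a triangle $C' \to C \to C''$ with strictly shorter ends, and handling the length-one base case (a finitely generated projective in a single degree) via the identification $\Hom_{\Der A}(C,X)\cong\Hom_A(C^i,H^i(X))$ together with an isomorphism from Enochs--Jenda. You instead invoke the abstract fact that the compact objects form the smallest thick subcategory containing $A$, observe that the class where $\Xi$ is an isomorphism is thick because both sides are contravariant cohomological functors (flatness of $S$ over $R$ makes $-\otimes_R S$ exact, which is what makes the source cohomological), and take $C=A$ as the base case, where $\Xi$ is just the flat base-change isomorphism $H^0(X)\otimes_R S\xrightarrow{\ \sim\ }H^0(X\lotimes_R S)$. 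The two proofs encode the same dévissage, but your packaging is cleaner: the thick-subcategory formalism absorbs the shifts, summands, and cone steps that the paper treats by hand, and by starting from $A$ rather than from an arbitrary finitely generated projective in some degree you avoid the appeal to the Enochs--Jenda Hom-tensor isomorphism and need only the elementary flat base change for homology. The one point worth making explicit in a final write-up is the identification of $\Xi_A$ with the canonical map $H^0(X)\otimes_R S\to H^0(X\lotimes_R S)$, which you already flag as the main subtlety; once that is unwound, the rest is standard.
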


\begin{proof} Observe that we can assume that $C$ is a bounded complex of finitely generated projective modules, so we can view the homomorphisms from $\Hom_{\Der{A}}(C,X)$ as morphisms in the homotopy category. As in the above mentioned proof we apply the induction on the length of $C$. If $C$ is of length $1$, it follows that $C$ is a finitely generated projective module concentrated in a degree $i$. It follows that $\Hom_{\Der{A}}(C,X)\cong \Hom_A(C^i,H^i(X))$ and, using the flatness of $S$, we obtain the isomorphisms \begin{align*}
    \Hom_{\Der{A}}(C\lotimes_R S,X\lotimes_R S)& \cong \Hom_B(C^i\otimes_R S,H^i(X\otimes_R S)) \\  &\cong \Hom_B(C^i\otimes_R S,H^i(X)\otimes_R S).\end{align*} We apply \cite[Lemma 3.2.4]{EJ} to conclude that, in this particular case, $\Xi$ is an isomorphism. 

Assume that the statement is valid for all bounded complexes of projectives of length at most $n-1$, and suppose that $C$ has length $n$ ($n\geq 2$). Using the brutal truncations we observe that we can embed $C$ in a triangle 
$$C'\to C\to C''\overset{+}\to$$ such that $C'$ and $C''$ are complexes of finitely generated projective modules of length at most $n-1$. 

We obtain the commutative diagram 
$$\xymatrix{\Hom_{\Der{A}}(C',X[-1])\otimes_R S\ar[r]^{\cong}\ar[d] & \Hom_{\Der{B}}(C'\lotimes_R S,X[-1]\lotimes_R S)\ar[d]\\ 
\Hom_{\Der{A}}(C'',X)\otimes_R S\ar[r]^{\cong}\ar[d] &  \Hom_{\Der{B}}(C''\lotimes_R S,X\lotimes_R S)\ar[d]\\ 
\Hom_{\Der{A}}(C,X)\otimes_R S\ar[r]\ar[d] & \Hom_{\Der{B}}(C\lotimes_R S,X\lotimes_R S)\ar[d]\\
\Hom_{\Der{A}}(C',X)\otimes_R S\ar[r]^{\cong}\ar[d] & \Hom_{\Der{B}}(C'\lotimes_R S,X\lotimes_R S)\ar[d] \\ 
\Hom_{\Der{A}}(C'',X[1])\otimes_R S\ar[r]^{\cong}& \Hom_{\Der{B}}(C''\lotimes_R S,X[1]\lotimes_R S),
}$$ 
where the horizontal arrows are the corresponding maps $\Xi$. The conclusion follows from five lemma. 
\end{proof}

\begin{rem}
If $\lambda:R\to S$ is faithfully flat then the derived functor $-\lotimes_R S:\Der R\to \Der S$ does not reflect zero-morphisms. In order to see this, let consider a prime $p$ and the faithfully flat inclusion $\Z_{(p)}\to \mathbb{J}_p,$ where $\Z_{(p)}$ it the (local) ring of all rational numbers whose denominator is coprime with $p$ and $\mathbb{J}_p$ is the ring of $p$-adic integers. Observe that 
$$\Hom_{\Der{\Z_{(p)}}}(\mathbb{Q},\Z_{(p)}[1])\cong\Ext_{\Z_{(p)}}(\mathbb{Q},\Z_{(p)})\neq 0.$$ However, $$\Hom_{\Der{\mathbb{J}_{p}}}(\mathbb{Q}\lotimes_{\Z_{(p)}}\mathbb{J}_{p},\Z_{(p)}\lotimes_{\Z_{(p)}}\mathbb{J}_{p}[1])\cong\Ext_{\mathbb{J}_{p}}(\mathbb{Q}\lotimes_{\Z_{(p)}}\mathbb{J}_{p},\mathbb{J}_{p})= 0,$$ since $\mathbb{J}_{p}$ is pure-injective and $\mathbb{Q}\lotimes_{\Z_{(p)}}\mathbb{J}_{p}$ is flat.  
However, $-\lotimes_R S$ reflects the phantoms.
\end{rem}

\begin{prop}\label{phantoms}
Let $\lambda:R\to S$ be a faithfully flat morphism of commutative rings. Suppose that $A$ is an $R$-algebra such that $R$ is central and that $B=A\otimes_R S$. If $\phi:Z\to X[1]$ is a map $\Der{A}$ such that the induced map  $Z\lotimes_RS\to X[1]\lotimes_RS$ is phantom in $\Der B$ then $\phi$ is phantom too. 
\end{prop}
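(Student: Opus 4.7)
The plan is to take an arbitrary compact object $C \in \Der A$ with a morphism $f \colon C \to Z$, and to show that the composite $\phi \circ f \colon C \to X[1]$ vanishes. Applying the derived extension functor $- \lotimes_R S \colon \Der A \to \Der B$ yields the morphism $(\phi \circ f) \lotimes_R S = (\phi \lotimes_R S) \circ (f \lotimes_R S)$ in $\Der B$. The source $C \lotimes_R S$ is compact in $\Der B$, since $C$ may be represented by a bounded complex of finitely generated projective $A$-modules and tensoring such modules with $B = A \otimes_R S$ over $A$ yields finitely generated projective $B$-modules. Because $\phi \lotimes_R S$ is a phantom in $\Der B$ by hypothesis, the composite $(\phi \lotimes_R S) \circ (f \lotimes_R S)$ vanishes in $\Hom_{\Der B}(C \lotimes_R S, X[1] \lotimes_R S)$.

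Now I invoke Lemma \ref{lemma-zim} for the compact object $C$: the canonical morphism
\[
\Xi \colon \Hom_{\Der A}(C, X[1]) \otimes_R S \longrightarrow \Hom_{\Der B}(C \lotimes_R S, X[1] \lotimes_R S)
\]
is an isomorphism of $S$-modules, and it sends a simple tensor $g \otimes 1$ to $g \lotimes_R S$. In particular, $\Xi((\phi \circ f) \otimes 1) = (\phi \lotimes_R S) \circ (f \lotimes_R S) = 0$, and since $\Xi$ is injective we conclude $(\phi \circ f) \otimes 1 = 0$ in $\Hom_{\Der A}(C, X[1]) \otimes_R S$.

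Finally, I use faithful flatness of $\lambda$: for every $R$-module $M$ (and in particular for the $R$-module $\Hom_{\Der A}(C, X[1])$, on which $R$ acts via its central embedding into $A$), the unit map $M \to M \otimes_R S$, $m \mapsto m \otimes 1$, is injective. Hence $\phi \circ f = 0$, and since $f$ was an arbitrary morphism from a compact object, $\phi$ is a phantom in $\Der A$.

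The steps are all essentially bookkeeping; the only point requiring a little care is the injectivity of $M \to M \otimes_R S$ under faithful flatness, which is standard (it follows from splitting the canonical $M \otimes_R S \to M \otimes_R S \otimes_R S$ via multiplication in $S$ and then applying the flat-detects-exactness property of $S$ over $R$). The result of \cite[Lemma 2.2]{NT} invoked via Lemma \ref{lemma-zim} handles the main technical content: without it we would only have a natural transformation rather than an isomorphism $\Xi$, and the argument would fail.
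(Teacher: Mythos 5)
Your argument is correct and is essentially the paper's own proof: both reduce to showing $\Hom_{\Der A}(C,\phi)=0$ for every compact $C$, both note that $C\lotimes_R S$ is compact so that $\Hom_{\Der B}(C\lotimes_R S,\phi\lotimes_R S)=0$, both transfer this back via the isomorphism $\Xi$ of Lemma~\ref{lemma-zim}, and both finish with faithful flatness (you via injectivity of $m\mapsto m\otimes 1$, the paper via $N\otimes_R S=0\Rightarrow N=0$ applied to the image, which are interchangeable). The only cosmetic difference is that you argue element-by-element with a fixed $f\colon C\to Z$, whereas the paper phrases the same step at the level of the $R$-module map $\Hom_{\Der A}(C,\phi)$.
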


\begin{proof}
For every compact $C$ in $\Der A$ the object $C\lotimes_RS$ is compact in $\Der B$, hence $\Hom_{\Der B}(C\lotimes_RS,\phi\lotimes_R S)=0$. Using Lemma \ref{lemma-zim} it follows that $\Hom_{\Der A}(C,\phi)\otimes_R S=0$, hence $\Hom_{\Der A}(C,\phi)=0$. 
%
\end{proof}

\section{Silting and cosilting in $\Der R$}\label{sect-silt-cosilt}

\subsection{TTF-triples in triangulated categories}
We recall some standard notions and notations in triangulated categories. Let $\D$ be a triangulated category. 
For a subcategory $\C\subseteq\D$ and a subset $I\subseteq\Z$ of the set of integers, we denote 
\[\C^{\perp_I}=\{X\in\D\mid\Hom_\D(C,X[i])=0,\hbox{ for all }C\in\C\hbox{ and all }i\in I\}\]
and 
\[{^{\perp_I}\C}=\{X\in\D\mid\Hom_\D(X,C[i])=0,\hbox{ for all }C\in\C\hbox{ and all }i\in I\}.\] Often the set $I$ is symbolized as $\leq n$, $<n$, $=n$ or similar, where $n\in\Z$, with the obvious meaning. 

A {\em torsion pair} in $\D$ is a pair $(\U,\V)$ of subcategories $\D$ such that:
\begin{enumerate}
    \item Both $\U$ and $\V$ are closed under direct summands.
    \item $\U^{\perp_0}=\V$ and $\U={^{\perp_0}\V}$.
    \item Every objects $X\in\D$ lies in a triangle $U\to X\to V\to U[1]$ with 
    $U\in\U$ and $V\in\V$. 
\end{enumerate}
It is clear that if $(\U,\V)$ is a torsion pair, then $\U$ is closed under coproducts and $\V$ is closed under products. 
A {\em TTF-triple} in $\D$ is a triple that  $(\U,\V,\W)$ such that both $(\U,\V)$ and $(\V,\W)$ are torsion pairs. Let $\CS$ be a set of objects from $\D$. The TTF triple is \textsl{generated by $\CS$} if $\V=\CS^{\perp_0}$. An object $C\in \D$ is called \textit{compact} if the covariant functor $\Hom_\D(C,-)$ commutes with respect to direct sums. If $\CS$ is a set of compact objects then we will say that the TTF-triple is \textit{compactly generated}. 

A torsion pair is called \textit{t-structure} (\textit{co-t-structure}) if in addition $\U$ is closed under positive (respectively negative shifts). Note that the closure of $\U$ under positive (negative) shifts, that is $\U[1]\subseteq\U$ (respectively $\U[-1]\subseteq\U$) is equivalent to $\V[-1]\subseteq\V$ (respectively $\V[1]\subseteq\V$). In these cases, we say that $\U$ is the aisle, respectively $\V$ is the coaisle of the (co)-t-structure $(\U,\V)$. 

\subsection{Silting and cosilting objects in derived categories}
Let $T\in\Der R$. We say that $T$ is a {\em silting object} if the pair $(T^{\perp_{>0}},T^{\perp_{\leq 0}})$ is a t-structure in $\Der R$. In this case, the induced t-structure  is called the {\em silting t-structure} induced by $T$. A class of objects is a \textit{silting class} if it is an aisle of a silting t-structure. 
Dually, an object $C\in\Der{R}$ is called \textit{cosilting} if $({^{\perp_{\leq0}}C},{^{\perp_{>0}}C})$ is a t-structure, called the {\em cosilting t-structure} induced by $C$. A coaisle of a cosilting t-structure is called a \textit{cosilting class}. Two (co)silting objects are \textit{equivalent} if they induce the same (co)silting class.

\begin{rem}\label{functors-classes}
We know by \cite[Lemma 4.5]{PS18} that two silting objects $T_1$ and $T_2$ are equivalent \iff $\Add(T_1)=\Add(T_2)$. Similarly, two cosilting objects $C_1$ and $C_2$ are equivalent \iff $\Prod(C_1)=\Prod(C_2)$. 
\end{rem}

\begin{lemma}\label{char-silt}
 An object $T\in\Der R$ is silting \iff
 \begin{enumerate}[{\rm (S1)}]
        \item $T\in T^{\perp_{>0}}$. 
        \item $T^{\perp_{>0}}$ is closed under coproducts.
        \item $T$ generates $\Der R$, that is $T^{\perp_{\Z}}=\{0\}$. 
    \end{enumerate}
 Dually, a pure-injective object $C\in\Der{R}$ is cosilting \iff
    \begin{enumerate}[{\rm (C1)}]
        \item $C\in{^{\perp_{>0}}C}$. 
        \item ${^{\perp_{>0}}C}$ is closed under products.
        \item $C$ cogenerates $\Der{R}$, that is ${^{\perp_{\Z}}C}=\{0\}$. 
    \end{enumerate}
\end{lemma}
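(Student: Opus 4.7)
The plan is to argue the silting equivalence directly in both directions, and then obtain the cosilting one by the formally dual argument, flagging precisely where pure-injectivity of $C$ is needed.

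For the forward direction of the silting equivalence, I would start by assuming $(\U,\V):=(T^{\perp_{>0}},T^{\perp_{\leq 0}})$ is a t-structure. The torsion pair identity $\U={}^{\perp_0}\V$ applied to $T$, combined with $\Hom_{\Der R}(T,V)=0$ for every $V\in\V$ (since $0\leq 0$), yields $T\in\U$, which is (S1). Aisles of t-structures are always closed under coproducts, so (S2) holds. For (S3), any $X\in T^{\perp_\Z}$ lies in $\U\cap\V$, so torsion pair orthogonality forces $\Hom_{\Der R}(X,X)=0$ and hence $X=0$.

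For the reverse direction, suppose (S1)--(S3). Then $T^{\perp_{>0}}$ is automatically closed under $[1]$ and under extensions, is closed under coproducts by (S2), and contains $T$ by (S1); hence it contains $\susp(T)$, the smallest subcategory of $\Der R$ with these closure properties. Because $\Der R$ is compactly generated, \cite{AJS} supplies a t-structure $(\susp(T),\susp(T)^{\perp_0})$. A short computation then identifies $\susp(T)^{\perp_0}=T^{\perp_{\leq 0}}$: one first checks that $\susp(T)$ coincides with the smallest subcategory closed under extensions and coproducts containing the set $\{T[i]:i\geq 0\}$, and then for any $X\in T^{\perp_{\leq 0}}$ observes that $\{U:\Hom_{\Der R}(U,X)=0\}$ contains these generators and is stable under extensions and coproducts. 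Now given $X\in T^{\perp_{>0}}$, I take the approximation triangle $U\to X\to V\to U[1]$ provided by this t-structure, with $U\in\susp(T)\subseteq T^{\perp_{>0}}$ and $V\in T^{\perp_{\leq 0}}$. The long exact sequence of $\Hom_{\Der R}(T,-)$ together with the vanishings $\Hom_{\Der R}(T,X[i])=\Hom_{\Der R}(T,U[i])=0$ for $i>0$ forces $V\in T^{\perp_{>0}}$, so $V\in T^{\perp_\Z}=\{0\}$ by (S3). Therefore $X\cong U\in\susp(T)$, giving $T^{\perp_{>0}}=\susp(T)$, which is the required aisle.

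The cosilting equivalence then follows by the formally dual argument, with $\cosusp(C)$ in the role of the coaisle. The hypothesis that $C$ is pure-injective is precisely what is needed to invoke the cosilting analogue of \cite{AJS} to produce the t-structure $({^{\perp_0}\cosusp(C)},\cosusp(C))$; alternatively, the silting argument can be transported through the character-dual $(\ )^+=\RHom_\Z(-,\Q/\Z)$ from the Preliminaries, under which pure-injective cosilting objects in $\Der R$ correspond to silting objects in $\Der{R\op}$.

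The main obstacle in this outline is the existence and explicit identification of the approximation t-structure generated by $\susp(T)$ (respectively $\cosusp(C)$). Once this is furnished by \cite{AJS} and its cosilting dual, matching the resulting orthogonal against $\V$ (resp.\ $\U$) and using (S3) (resp.\ (C3)) to collapse the residual approximation term are essentially bookkeeping.
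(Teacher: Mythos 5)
The forward implication and the overall plan for the reverse implication are sound, and in fact the reverse-direction strategy you sketch (generate a t-structure whose aisle is the smallest suspended, coproduct-closed subcategory containing $T$, identify its right orthogonal as $T^{\perp_{\leq 0}}$, then collapse the residual approximation term via (S3)) is essentially the argument behind the reference the paper actually uses, namely \cite[Proposition 4.13]{PS18}.

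However, there is a genuine gap at the pivotal step: you invoke \cite{AJS} to produce the t-structure with aisle the cocomplete pre-aisle generated by $T$. That reference concerns \emph{compactly generated} t-structures and does not apply here, since a silting object $T$ need not be compact. The fact that the smallest suspended, coproduct-closed subcategory containing an arbitrary set of objects of $\Der R$ is actually an aisle (and not merely a pre-aisle) is a nontrivial theorem; this is precisely the content of Neeman \cite{N18}, and the paper explicitly flags this (``the main result of \cite{N18} is essentially the reason why no extra assumption is needed on the silting side''). Without this input, the reverse implication is incomplete.

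The dual half has the same issue in sharper form: there is no cosilting analogue of Neeman's theorem, which is exactly why an extra hypothesis (pure-injectivity of $C$) is required. You correctly flag that pure-injectivity is what produces the t-structure $({^{\perp_0}\cosusp(C)},\cosusp(C))$, but the citation should be \cite[Corollary 5.11]{LV20}, not a ``cosilting analogue of \cite{AJS}''. Finally, the alternative you offer of transporting the silting statement through the character dual $(\ )^+$ does not work: $(\ )^+$ is not an equivalence, and Theorem~\ref{silt-cosilt-dual} only produces a correspondence between silting objects of \emph{finite type} and cosilting objects of \emph{cofinite type}. A general pure-injective cosilting object in $\Der R$ need not arise as $T^+$ for a silting $T$, so the proposed transport is unavailable.
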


\begin{proof}
 The characterization of the silting complexes is a particular case of \cite[Proposition 4.13]{PS18}.
 For the cosilting case, the argument is dual, the only modification occurs when we need to show that
 $C$ induces a t-structure whose aisle is ${^{\perp_{\leq0}}C}$ (and consequently the coaisle is the smallest cosuspended and closed under products subcategory of $\Der R$). Here we use the additional hypothesis that $C$ is pure-injective and then the claim follows by \cite[Corollary 5.11]{LV20} (the main result of \cite{N18} is essentially the reason why no extra assumption is needed on the silting side.)
\end{proof}

\begin{rem}\label{TTF-notations} (1) A $t$-structure  $(\Y,\CZ)$ is silting  \iff it extends to a TTF-triple $(\X,\Y,\CZ)$ which is non-degenerate, suspended and generated by a set of objects in $\Der R$, \cite[Theorem 4.11]{An-19}. If $(\X,\Y,\CZ)$ is induced by the silting object $T$, we will denote it by $(\X_T,\Y_T,\CZ_T)$

(2) A silting object $T$ is bounded, that is, isomorphic in $\Der R$ to a bounded complex of projectives, 
if and only if  $(\X_T,\Y_T,\CZ_T)$ is intermediate and suspended, see \cite[Theorem 2.1]{AH21}. In particular, this TTF-triple is compactly generated. 

(3) Dually, a t-structure $(\U,\V)$ in $\Der{R\op}$ 
is induced by a pure--injective cosilting $C$ object \iff it extends to a non-degenerate cosuspended TTF triple $(\U,\V,\W)$ which is homotopically smashing, \cite[Theorem 4.6]{La}. In this case we will write $(\U,\V,\W)=(\U_C,\V_C,\W_C)$. 

(4) A cosilting object $C$ is bounded, that is, isomorphic in $\Der R$ to a bounded complex of injectives if and only if the associated TTF-triple $(\U_C,\V_C,\W_C)$ is cointermediate and cosuspended, see \cite[Theorem 2.12]{AH21}. As before, in this case $(\U_C,\V_C,\W_C)$ is homotopically smashing.
\end{rem}

\subsection{(Co)Silting of (co)finite type}

For a not necessarily commutative ring $R$, there is a 1-to-1 correspondence (see \cite[Theorem 3.1]{AH21}) 
\[\left\{\begin{array}{c}\hbox{compactly generated TTF }\\ \hbox{ triples in }\Der{R}\end{array}\right\}\longrightarrow\left\{\begin{array}{c}\hbox{compactly generated TTF }\\ \hbox{ triples  in }\Der{R\op}\end{array}\right\} \]
which assigns the TTF triple in $\Der R$ generated by a set $\C$ of compacts in $\Der R$ to the TTF triple in $\Der{R\op}$ generated by the set of compacts $\C^*=\{C^*\mid C\in\C\}$. 

A silting object $T\in\Der R$ is of \textit{finite type} if the TTF triple $(\X_T,\Y_T,\CZ_T)$ is compactly generated. A cosilting object $C$ is of \textit{cofinite type} if the TTF triple $(\U_C,\V_C,\W_C)$ is compactly generated. 

\begin{rem}
From  \cite{SS20} it follows that all compactly generated t-structures in $\Der R$ are homotopically smashing, hence all cosilting objects of cofinite type from $\Der R$ are pure-injective. {The converse is not true in general, but it holds if $R$ is commutative noetherian \cite[Corollary 2.14]{HN21}.}
\end{rem}

\begin{thm}\label{silt-cosilt-dual}\cite[Theorem 3.3]{AH21} The assignment $T\mapsto T^+$ induces: 
\begin{enumerate}[{\rm (1)}]
\item an injective map from the set of equivalence classes of silting objects of finite type from $\Der R$ to the set of equivalence classes of cosilting objects of cofinite type from $\Der{R\op}$;

\item a bijective map between the set of equivalence classes of bounded silting complexes from $\Der R$ and the set of equivalence classes of bounded cosilting complexes of cofinite type from $\Der{R\op}$.
\end{enumerate}
\end{thm}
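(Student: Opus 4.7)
The approach is to realize the map $T\mapsto T^+$ as an instance of the bijection recalled just above the statement, namely the correspondence of \cite[Theorem 3.1]{AH21} between compactly generated TTF triples in $\Der R$ and in $\Der{R\op}$ induced by the assignment $\C\mapsto \C^*$ on sets of compact generators. The goal is to show that this bijection sends the silting TTF of a silting object $T$ of finite type in $\Der R$ to the cosilting TTF of $T^+$ in $\Der{R\op}$, and that in the bounded case the relevant finiteness conditions on the TTFs match on both sides.

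Given a silting $T$ of finite type, the TTF triple $(\X_T,\Y_T,\CZ_T)$ is compactly generated, so $\Y_T=\C^{\perp_0}$ for some set $\C$ of compact objects. Applying the correspondence produces a compactly generated TTF in $\Der{R\op}$ whose middle class is $(\C^*)^{\perp_0}$. The main technical step is to identify this with $(\U_{T^+},\V_{T^+},\W_{T^+})$, and for this I would verify the equality $(\C^*)^{\perp_0}={^{\perp_{>0}}T^+}$ by translating both sides into vanishing of homologies of derived tensor products. On one hand, the natural isomorphism $\RHom_{R\op}(Y,T^+)\cong (T\lotimes_R Y)^+$ from the first Lemma, together with $H^n(X^+)\cong H^{-n}(X)^+$ and the faithfulness of Pontryagin duality, converts the condition $Y\in{^{\perp_{>0}}T^+}$ into a vanishing statement about $H^{<0}(T\lotimes_R Y)$. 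On the other hand, from the compact-duality identity $\RHom_R(C,X)\cong X\lotimes_R C^*$ applied on the opposite side (using $(C^*)^*\cong C$ for compact $C$) one obtains $\RHom_{R\op}(C^*,Y)\cong C\lotimes_R Y$, converting $Y\in(\C^*)^{\perp_0}$ into a corresponding tensor-vanishing condition involving the objects of $\C$. The defining identity $\C^{\perp_0}=T^{\perp_{>0}}$ is what matches these two conditions. Pure-injectivity of $T^+$ is automatic because $\Q/\Z$ is injective over $\Z$, so by Lemma \ref{char-silt} this identification shows that $T^+$ is cosilting, and it is of cofinite type by construction.

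Well-definedness and injectivity on equivalence classes then follow at once from Remark \ref{functors-classes} and its dual: equivalent (co)silting objects are exactly those inducing the same TTF triple, and the TTF correspondence is a bijection, so $T_1^+\sim T_2^+$ forces the silting TTFs of $T_1$ and $T_2$ to coincide and hence $T_1\sim T_2$. For the surjectivity in part (2), start with a bounded cosilting $C$ of cofinite type; by Remark \ref{TTF-notations}(4) its TTF is cointermediate, cosuspended, and compactly generated. Applying the inverse correspondence yields a compactly generated TTF in $\Der R$ that is intermediate and suspended, since the duality $(-)^*$ restricted to bounded complexes of projectives interchanges suspended with cosuspended and preserves the intermediate property. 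By Remark \ref{TTF-notations}(2) this TTF arises from a bounded silting complex $T$, and $T^+\sim C$ follows from the identification established above.

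The main obstacle will be the careful tracking of shifts and closure conditions on $\C$ needed to make the equality $(\C^*)^{\perp_0}={^{\perp_{>0}}T^+}$ fully rigorous: one has to choose $\C$ so that $\C^{\perp_0}$ captures $T^{\perp_{>0}}$ with the right boundary on shifts, and verify that after dualizing by $(-)^*$ the resulting tensor-vanishing condition lands in degrees matching ${^{\perp_{>0}}T^+}$ rather than a shifted variant of it. A secondary point of care is the transfer of the intermediate/cointermediate and suspended/cosuspended properties through the $(-)^*$-duality in the surjectivity argument of part (2).
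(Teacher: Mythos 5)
This theorem is not proved in the paper; it is imported by citation from [AH21, Theorem~3.3], so there is no internal argument to measure your attempt against. Your proposal nevertheless follows the route that the source itself takes: pass through the correspondence of [AH21, Theorem~3.1] between compactly generated TTF triples on the two sides of $(-)^*$, and use the natural isomorphisms of Lemma~21 to identify the image of the silting TTF of $T$ with the cosilting TTF of $T^+$; injectivity is then Remark~\ref{functors-classes}, and surjectivity in the bounded case uses the bounded/(co)intermediate dictionary of Remark~\ref{TTF-notations}.

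One link in your chain is stated in a way that would not hold up as written. You translate $Y\in{^{\perp_{>0}}T^+}$ into vanishing of $H^{<0}(T\lotimes_RY)$ and $Y\in(\C^*)^{\perp_0}$ into a vanishing of homologies of $C\lotimes_RY$ for $C\in\C$, and then say that the identity $\C^{\perp_0}=T^{\perp_{>0}}$ ``is what matches these two conditions.'' But that identity is an equality of subclasses of $\Der R$, whereas the two tensor-vanishing conditions concern $Y\in\Der{R\op}$; they are not literally instances of the same statement, so the match is not immediate. What actually closes the gap is a second application of Pontryagin duality: the vanishing of $H^{j}(T\lotimes_RY)$ for $j<0$ is, by Lemma~21(2) and the faithfulness of $(-)^+$, equivalent to $Y^+\in T^{\perp_{>0}}$, and likewise the $\C$-vanishing is equivalent to $Y^+\in\C^{\perp_0}$; only these conditions on $Y^+\in\Der R$ can be compared via $\C^{\perp_0}=T^{\perp_{>0}}$. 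This is precisely the ``dual definability'' of $\Y_T$ and $\V_{T^+}$ recorded in Remark~\ref{dual-definable}. With that step made explicit, and with the shift bookkeeping for $\C$ and the transfer of (co)suspendedness and (co)intermediateness through $(-)^*$ that you already flag as needing care, the outline is sound and consistent with the cited source.
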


\begin{rem}\label{dual-definable}
From the proof of \cite[Lemma 3.2]{AH21} it follows that in the above correspondences, the classes $\Y_{T}$ and $\V_{T^+}$ are dual definable classes, i.e. $Y\in \Y_T$ if and only if $Y^+\in \V_{T^+}$ and $V\in V_{T^+}$ if and only if $V^+\in \Y_T$.
\end{rem}

\begin{lemma}\label{t-str-H}
 For an object $T\in\Der R$ and a subset $I\subseteq\Z$ we have: 
 \begin{enumerate}[{\rm (a)}]
    \item $T^{\perp_I}=\{X\in\Der R\mid H^n\RHom_R(T,X)=0,\hbox{ for all }n\in I\}$. 
    \item ${^{\perp_I}(T^+)}=\{X\in\Der{R\op}\mid H^n(T\lotimes_RX)=0,\hbox{ for all }n\in-I\}$, \newline where $-I=\{-i\mid i\in I\}$.
\end{enumerate}
\end{lemma}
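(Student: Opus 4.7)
The statement is a direct unpacking of the definitions using the basic properties of the derived functors recalled earlier in the preliminaries. I would present it as a short two-part computation, and I expect no serious obstacle: the work is essentially bookkeeping.

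For part (a), my plan is to start from the definition
\[
T^{\perp_I}=\{X\in\Der R\mid \Hom_{\Der R}(T,X[n])=0,\ \forall n\in I\},
\]
and then invoke the identity
\[
H^n\RHom_R(T,X)\cong \Hom_{\Der R}(T,X[n])
\]
stated in the subsection on derived functors. Vanishing of the left-hand side for every $n\in I$ is therefore equivalent to $X\in T^{\perp_I}$, and (a) follows immediately.

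For part (b), I would unfold ${}^{\perp_I}(T^+)$ as
\[
\{X\in\Der{R\op}\mid \Hom_{\Der{R\op}}(X,T^+[i])=0,\ \forall i\in I\}
=\{X\mid H^i\RHom_{R\op}(X,T^+)=0,\ \forall i\in I\}.
\]
The key step is to transport the computation to the tensor side by applying the natural isomorphism
\[
\RHom_{R\op}(X,T^+)\cong (T\lotimes_R X)^+
\]
from part~(2) of the lemma in the derived-functors subsection. Then I would apply part~(1) of the same lemma, which gives $H^i((T\lotimes_R X)^+)\cong H^{-i}(T\lotimes_R X)^+$ and, crucially, $H^i((T\lotimes_R X)^+)=0$ if and only if $H^{-i}(T\lotimes_R X)=0$. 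Reindexing the vanishing condition $i\in I$ as $-i\in -I$ yields exactly the claimed description. The only mild care needed is to keep the sign on the shift straight and to remember that the faithfulness of the character-dual functor on homology (not just the isomorphism) is what lets us conclude the equivalence of the vanishing conditions.
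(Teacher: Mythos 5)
Your proposal is correct and follows essentially the same route as the paper: part (a) is the direct translation via $\Hom_{\Der R}(T,X[n])\cong H^n\RHom_R(T,X)$, and part (b) reduces the vanishing condition through $\RHom_{R\op}(X,T^+)\cong (T\lotimes_RX)^+$ and the sign flip $H^n(Y^+)=0 \Leftrightarrow H^{-n}(Y)=0$. The only cosmetic difference is that the paper re-derives the isomorphism $\RHom_{R\op}(X,T^+)\cong (T\lotimes_RX)^+$ inline from the adjunction, whereas you cite it from the recalled lemma of \cite{AH21}; both are equally valid.
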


\begin{proof}
The equality (a) follows by the isomorphism \[\Hom_{\Der R}(T,X[n])\cong H^n\RHom_R(T,X),\] valid for any $n\in\Z$.  

For the equality (b), note  that for all $n\in\Z$ we have the isomorphisms:
\begin{align*}
\Hom_{\Der{R\op}}(X,T^+[n])&\cong H^n\RHom_{R\op}(X,T^+)\\
&=H^n\RHom_{R\op}(X,\RHom_\Z(T,\Q/\Z))\\ &\cong H^n\RHom_\Z(T\lotimes_RX,\Q/Z)\\
&=H^n(T\lotimes_RX)^+
\end{align*}
and $H^n(T\lotimes_RX)^+=0$ \iff $H^{-n}(T\lotimes_RX)=0$.
\end{proof}

\begin{cor}\label{t-str-by-H}
Let $T\in\Der R$. 
If $T$ is a silting object and $(\X_T,\Y_T,\CZ_T)$ is the TTF triple associated to $T$, then 
\begin{enumerate}
    \item[{\rm (a)}] $\Y_T=\{X\in\Der R\mid H^n\RHom_R(T,X)=0,\hbox{ for all }n>0\}$. 
    \item[{\rm (b)}] $\CZ_T=\{X\in\Der R\mid H^n\RHom_R(T,X)=0,\hbox{ for all }n\leq 0\}$.
\end{enumerate}
If $T^+$ is cosilting (in $\Der{R^{\mathrm{op}}}$) and $(\U_{T^+},\V_{T^+},\W_{T^+})$ is the corresponding TTF triple, then:
\begin{enumerate}
    \item[{\rm (c)}] $\U_{T^+}=\{X\in\Der{R^{\mathrm{op}}}\mid H^n(T\lotimes_RX)=0,\hbox{ for all }n\geq0\}$.
    \item[{\rm (d)}] $\V_{T^+}=\{X\in\Der{R^{\mathrm{op}}}\mid H^n(T\lotimes_RX)=0,\hbox{ for all }n<0\}$.
\end{enumerate}
\end{cor}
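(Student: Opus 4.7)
The plan is to recognize Corollary \ref{t-str-by-H} as a direct unpacking of Lemma \ref{t-str-H} together with the definitions of the silting and cosilting t-structures from Section 3.2. For each of the four parts, I would identify which index set $I \subseteq \Z$ to plug into the appropriate clause of Lemma \ref{t-str-H}, and then simply invoke that lemma.

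For (a) and (b), the silting t-structure associated to $T$ is by definition $(T^{\perp_{>0}}, T^{\perp_{\leq 0}})$, and in the TTF triple $(\X_T,\Y_T,\CZ_T)$ the middle term $\Y_T$ is both the coaisle of $(\X_T,\Y_T)$ and the aisle of $(\Y_T,\CZ_T)$. Hence $\Y_T = T^{\perp_{>0}}$ and $\CZ_T = T^{\perp_{\leq 0}}$. Then (a) follows from Lemma \ref{t-str-H}(a) applied with $I = \{n \in \Z : n > 0\}$, and (b) from the same clause applied with $I = \{n \in \Z : n \leq 0\}$.

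For (c) and (d), the pure-injective cosilting t-structure associated to $T^+$ is $({}^{\perp_{\leq 0}}(T^+), {}^{\perp_{>0}}(T^+))$, so in the TTF triple $(\U_{T^+},\V_{T^+},\W_{T^+})$ we have $\U_{T^+} = {}^{\perp_{\leq 0}}(T^+)$ and $\V_{T^+} = {}^{\perp_{>0}}(T^+)$. I would apply Lemma \ref{t-str-H}(b) with $I = \{n \leq 0\}$, whose negation $-I = \{n \geq 0\}$ yields (c), and then with $I = \{n > 0\}$, whose negation $-I = \{n < 0\}$ yields (d).

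There is no real obstacle here; the only step demanding a moment of care is the sign flip between $I$ and $-I$ in the cosilting cases, caused by the shift $\Hom_\Z(-,\Q/\Z)$ reverses degrees. That flip is already built into Lemma \ref{t-str-H}(b), so the corollary is a routine substitution.
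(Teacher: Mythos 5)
Your proof is correct and matches exactly what the paper intends: the corollary is a direct substitution into Lemma~\ref{t-str-H}, using $I = \{n > 0\}$ and $I = \{n \leq 0\}$ for the silting parts, and the same index sets (with the sign flip to $-I$ already encoded in Lemma~\ref{t-str-H}(b)) for the cosilting parts, together with the identifications $\Y_T = T^{\perp_{>0}}$, $\CZ_T = T^{\perp_{\leq 0}}$, $\U_{T^+} = {}^{\perp_{\leq 0}}(T^+)$, $\V_{T^+} = {}^{\perp_{>0}}(T^+)$ coming from the definitions and Remark~\ref{TTF-notations}.
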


 \begin{cor}\label{tensor-gen}
Let $T\in\Der R$ be an object such that $T^+$ is cosilting in $\Der R$. Then, for every $X\in\Der{R^{\mathrm{op}}}$ we have $T\lotimes_RX=0$ \iff $X=0$.
 \end{cor}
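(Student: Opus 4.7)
The plan is to combine the two cohomological descriptions in Corollary~\ref{t-str-by-H} with the torsion pair axiom.

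First I would observe that the reverse implication is trivial: if $X = 0$ in $\Der{R^{\mathrm{op}}}$, then certainly $T\lotimes_R X = 0$.

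For the forward implication, suppose $T\lotimes_R X = 0$. Since an object of a derived category vanishes if and only if all its cohomology objects vanish, we get $H^n(T\lotimes_R X) = 0$ for every $n \in \Z$. In particular, $H^n(T\lotimes_R X) = 0$ for all $n \geq 0$ and for all $n < 0$, so by parts (c) and (d) of Corollary~\ref{t-str-by-H} we conclude
\[ X \in \U_{T^+} \cap \V_{T^+}. \]

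Finally, since $T^+$ is cosilting, the pair $(\U_{T^+},\V_{T^+})$ is a torsion pair (in fact a t-structure) in $\Der{R^{\mathrm{op}}}$, and in particular the orthogonality $\V_{T^+} = \U_{T^+}^{\perp_0}$ holds. Thus $X$ lies both in $\U_{T^+}$ and in $\U_{T^+}^{\perp_0}$, so applying $\Hom_{\Der{R^{\mathrm{op}}}}(-,X)$ to the identity of $X$ (viewed once as an object of the aisle and once as an object of the coaisle) forces $\id_X = 0$. Hence $X = 0$, completing the proof.

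There isn't really a hard step here; the corollary is essentially immediate from its predecessor. The only substantive point is the conceptual observation that the intersection of the aisle and the coaisle of a (non-degenerate) t-structure is the zero class, which is a formal consequence of the torsion pair axiom $\V_{T^+} = \U_{T^+}^{\perp_0}$.
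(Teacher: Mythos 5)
Your proof is correct and follows essentially the same route as the paper: reduce to vanishing of all cohomologies of $T\lotimes_R X$, apply parts (c) and (d) of Corollary~\ref{t-str-by-H} to place $X$ in $\U_{T^+}\cap\V_{T^+}$, and conclude $X=0$ from the torsion pair axiom. The paper states the final intersection being zero without elaboration, whereas you spell out the $\Hom(X,X)=0$ argument, but the content is identical.
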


\begin{proof} The converse implication is obvious, so we need to prove only the direct one. Let $X\in\Der{R^{\mathrm{op}}}$ with the property $T\lotimes_RX=0$. Then $H^n(T\lotimes_RX)=0$ for all $n\in\Z$ and Lemma \ref{t-str-by-H} implies that $X\in\U_{T^+}\cap\V_{T^+}=\{0\}$. 
\end{proof}

\subsection{Bounded (co)silting complexes}\label{section-bounded}
In what follows, we will be particularly interested in silting complexes which are \textit{bounded} in the sense that they are isomorphic in the derived category to bounded complexes of projective modules. This is a natural condition to consider since it is part of the definition of $n$-tilting modules, and also it ensures a well-behaved theory of derived equivalences \cite{PS18}. It turns out that the characterization of Lemma~\ref{char-silt} simplifies considerably in this situation. 

For a (full) subcategory $\C$ of $\Der R$, {let $\add(C)$ (resp., $\Add(C)$) denote the subcategory formed by all direct summands of finite coproducts (resp., all coproducts) of objects from $\C$. Similarly, $\Prod(\C)$ is formed by all direct summands of arbitrary products of objects of $\C$. If $\C = \{X\}$ for some object $X$, we drop the curly brackets in the notation.}
We denote by $\susp(\C)$ (resp. $\cosusp(\C)$) the suspended (resp. cosupended) closure of $\C$ in $\Der R$, that is, the smallest full subcategory of $\Der R$ containing $\C$ and closed under direct summands, extensions and $[1]$ (resp., $[-1]$). Both $\susp(\C)$ and $\cosusp(\C)$ admit an explicit description as follows. Let $\CE^+_0 = \add \{C[n] \mid C \in \C, n \geq 0\}$ and $\CE^-_0 = \add \{C[n] \mid C \in \C, n \leq 0\}$. For $i>0$, define inductively subcategory $\CE^+_i$ consisting of all $X \in \Der R$ fitting into a triangle $E_0 \to X \to E_{i-1} \to E_0[1]$ with $E_0 \in \CE^+_0$ and $E_{i-1} \in \CE^+_{i-1}$; subcategories $\CE^-_i$ are defined in the analogous way. Then one can easily check that $\susp(\C) = \bigcup_{i \geq 0}\CE^+_i$ and $\cosusp(\C) = \bigcup_{i \geq 0}\CE^-_i$. This description has a consequence important in what follows:

\begin{lemma}\label{susp-coprod}
Suppose that $\C$ is a full subcategory of $\Der R$ closed under products (resp. coproducts). If $X \in \susp(\C)$ then $X^I \in \susp(\C)$ (resp. $X^{(I)} \in \susp(\C)$) for any set $I$, and the same holds for $\cosusp(\C)$.
\end{lemma}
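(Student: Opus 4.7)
The plan is to exploit the explicit inductive description $\susp(\C) = \bigcup_{i \geq 0}\CE^+_i$ (and its coproduct/cosuspended analogues) provided just before the statement, and to prove the claim by induction on $i$.

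For the base case, fix $X \in \CE^+_0 = \add\{C[n] \mid C \in \C,\ n \geq 0\}$, so that $X$ is a direct summand of a finite coproduct $\bigoplus_{j=1}^m C_j[n_j]$ with $C_j \in \C$ and $n_j \geq 0$. Since $\Der R$ is additive, finite coproducts coincide with finite products, so the $I$-fold product distributes:
\[ \Bigl(\bigoplus_{j=1}^m C_j[n_j]\Bigr)^I \cong \bigoplus_{j=1}^m (C_j^I)[n_j]. \]
By hypothesis $\C$ is closed under products, so each $C_j^I$ lies in $\C$, and consequently the right-hand side lies in $\CE^+_0$. Because taking products preserves direct-summand decompositions (from $X \oplus Y \cong \bigoplus_j C_j[n_j]$ we get $X^I \oplus Y^I \cong (\bigoplus_j C_j[n_j])^I$), $X^I$ is a direct summand of this object and therefore lies in $\CE^+_0$.

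For the inductive step, assume the claim for $\CE^+_{i-1}$ and let $X \in \CE^+_i$, so there is a triangle
\[ E_0 \longrightarrow X \longrightarrow E_{i-1} \longrightarrow E_0[1] \]
with $E_0 \in \CE^+_0$ and $E_{i-1} \in \CE^+_{i-1}$. Using the standard fact that products of distinguished triangles in a triangulated category with products are again distinguished (applied to the constant family indexed by $I$), we obtain a triangle
\[ E_0^I \longrightarrow X^I \longrightarrow E_{i-1}^I \longrightarrow E_0^I[1]. \]
By the base case $E_0^I \in \CE^+_0$, and by induction $E_{i-1}^I \in \CE^+_{i-1}$, so $X^I \in \CE^+_i \subseteq \susp(\C)$. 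This establishes the product statement; the coproduct version is entirely analogous, using that coproducts of triangles are triangles in $\Der R$ and the dual distributivity of coproducts over finite products. Finally, replacing positive shifts by negative shifts throughout gives the corresponding statements for $\cosusp(\C)$.

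I expect the only genuinely non-formal point to be the base case: verifying that the $I$-fold product of a finite coproduct of shifts of objects of $\C$ stays inside $\CE^+_0$, which relies both on the additive identification of finite products and coproducts and on the hypothesis that $\C$ itself is closed under $I$-fold products. Everything else reduces to induction and the fact that products/coproducts of triangles are triangles.
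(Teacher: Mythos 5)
Your proof is correct and follows essentially the same strategy as the paper's: induction on the index $i$ in the filtration $\susp(\C) = \bigcup_{i\ge 0}\CE^+_i$, with the base case handled via closure of $\C$ under products and the inductive step using that products of triangles in $\Der R$ are triangles. The only difference is that you spell out the base case (distributing the $I$-fold product across a finite coproduct and across a direct-summand decomposition) in more detail than the paper, which simply asserts it as clear.
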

\begin{proof}
Assume $\C$ is closed under products and let $I$ be a set. Since $X \in \susp(\C)$, there is by the above description some $i \geq 0$ such that $X \in \CE^+_i$. We prove $X^I \in \CE^+_i$ by induction on $i \geq 0$. If $i=0$ then $X \in \add\{C[n] \mid C \in \C, n \geq 0\}$. Since $\C$ is closed under products, clearly $X^I \in \add\{C[n] \mid C \in \C, n \geq 0\}$. If $i>0$ then there is a triangle $E_0 \to X \to E_{i-1} \to E_0[1]$ with $E_0 \in \CE^+_0$ and $E_{i-1} \in \CE^+_{i-1}$. Then the triangle $E_0^I \to X^I \to E_{i-1}^I \to E_0^I[1]$ together with the induction hypothesis shows that $X^I \in \CE^+_{i}$. The case of coproduct closure and/or the cosuspended closure is handled the same way.
\end{proof}
The following result extends \cite[Theorem 2.3 and Theorem 4.3]{PoS} from the case of $n$-(co)tilting modules, and the proof idea is similar to \emph{loc. cit.} with some necessary modifications. For basics on homotopy limits and colimits in triangulated categories we refer to \cite[\S 1.6]{Neeman}. We will also use the notations 
$\Der{R}^{\geq n}=\{X\in \Der R\mid H^{i}(X)=0 \textrm{ for all }i<n\}$  and $\Der{R}^{\leq n}=\{X\in \Der R\mid H^{i}(X)=0 \textrm{ for all }i>n\}. $

\begin{thm}\label{char-silt-bounded}
Let $T\in\Der R$ be an object which is isomorphic to a bounded complex of projective $R$-modules. Then $T$ is silting \iff
 \begin{enumerate}[{\rm (Sb1)}]
        \item $\Add(T)\subseteq T^{\perp_{>0}}$. 
        \item $T$ generates $\Der R$, that is $T^{\perp_{\Z}}=\{0\}$. 
    \end{enumerate}
 Dually, let $C\in\Der R$ be an object which is isomorphic to a bounded complex of injective $R$-modules. Then $C$ is cosilting \iff
    \begin{enumerate}[{\rm (Cb1)}]
        \item $\Prod(C)\subseteq{^{\perp_{>0}}C}$. 
        \item $C$ cogenerates $\Der{R}$, that is ${^{\perp_{\Z}}C}=\{0\}$. 
    \end{enumerate}
\end{thm}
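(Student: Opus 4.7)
I would split into the easy and hard directions. The easy implication is routine: if $T$ is silting, Lemma \ref{char-silt} gives $T \in T^{\perp_{>0}}$ together with closure of that class under coproducts (and summands), so $\Add(T) \subseteq T^{\perp_{>0}}$, which is (Sb1); (Sb2) is just condition (S3). The cosilting case is entirely dual.

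For the converse, my plan is to verify the three conditions (S1)--(S3) of Lemma \ref{char-silt}. Now (S1) follows from (Sb1) because $T \in \Add(T)$, and (S3) is literally (Sb2); so the entire content lies in establishing (S2), the closure of $T^{\perp_{>0}}$ under coproducts.

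To prove (S2), I exploit the hypothesis that $T$ is a bounded complex of projectives. Since $T^{\perp_{>0}}$ is always closed under extensions, summands, and positive shifts, (Sb1) yields $\susp(\Add(T)) \subseteq T^{\perp_{>0}}$. Applying Lemma \ref{susp-coprod} with $\C = \Add(T)$ (which is coproduct-closed) shows that $\susp(\Add(T))$, hence $T^{\perp_{>0}}$, is stable under self-coproducts $X \mapsto X^{(I)}$. To upgrade this to coproducts of possibly distinct $Y_\alpha \in T^{\perp_{>0}}$, I would adapt the tilting technique of \cite{PoS}: the boundedness of $T$ yields a hypercohomology spectral sequence
\[ E_2^{p,q} = \Hom_{\Der R}(T, H^q(Y)[p]) \Longrightarrow \Hom_{\Der R}(T, Y[p+q]), \]
in which $p$ is supported on a bounded interval determined by the projective amplitude of $T$. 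For $Y = \bigoplus Y_\alpha$ the equality $H^q(Y) = \bigoplus H^q(Y_\alpha)$ reduces the question to controlling $\Hom_{\Der R}(T, M[p])$ for $M$ a coproduct of modules $M_\alpha$ satisfying $\Hom_{\Der R}(T, M_\alpha[p]) = 0$. This module-level statement can be handled by a transfinite-extension argument of Eklof type, combined with (Sb1) at the module level, mimicking the PoS passage from finite to infinite direct sums.

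The cosilting direction is handled dually. The one additional observation needed is that $C$ is pure-injective, as required by Lemma \ref{char-silt}; this is automatic, since pure-injective objects of $\Der R$ are closed under extensions and summands, every injective module is pure-injective, and $C$ is a bounded complex of injectives. The main obstacle I anticipate is the final step for (S2): bridging from self-coproduct closure (which follows cleanly from Lemma \ref{susp-coprod}) to coproduct closure for heterogeneous families $(Y_\alpha)$. The PoS argument for $n$-tilting modules adapts in principle via the spectral sequence above, but careful bookkeeping is required to avoid tacitly invoking compactness of $T$, which is precisely the feature we do not have in the bounded-of-projectives setting.
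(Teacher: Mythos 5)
The easy direction and the reductions to (S1) and (S3) are fine. The problem is your plan for (S2), which is also where the paper diverges fundamentally from what you propose.

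There is a logical gap in the self-coproduct step. Lemma~\ref{susp-coprod} with $\C = \Add(T)$ shows that $\susp(\Add(T))$ is closed under $X \mapsto X^{(I)}$, and (Sb1) gives $\susp(\Add(T)) \subseteq T^{\perp_{>0}}$; but closure of a subcategory under an operation does not transfer to a larger subcategory containing it. In general $\susp(\Add(T)) \subsetneq T^{\perp_{>0}}$ (the aisle is closed under all coproducts while the suspended closure is not), so you cannot conclude that $T^{\perp_{>0}}$ itself is stable under self-coproducts from this. The spectral-sequence step is also problematic: even granting convergence, the $E_2$-page involves $\Hom_{\Der R}(T, H^q(Y_\alpha)[p])$, and $Y_\alpha \in T^{\perp_{>0}}$ does \emph{not} imply $H^q(Y_\alpha) \in T^{\perp_{>0}}$, so the intended reduction to a module-level statement and an Eklof-style argument does not get off the ground. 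In fact the point of the Positselski--\v{S}\v{t}ov\'\i\v{c}ek result (and of this theorem) is precisely that coproduct-closure of $T^{\perp_{>0}}$ is a nontrivial consequence rather than something to be checked directly.

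The paper's proof does not attempt to verify (S2) at all. Instead, it directly produces the t-structure: for each $X$, it builds an inductive tower $X = X_0 \to X_1 \to X_2 \to \cdots$ of approximations $T^{(H_i)}[i] \to X_i \to X_{i+1}$, forms the homotopy colimit $Z$, and shows (using the bounded amplitude of $T$) that $Z \in T^{\perp_{\leq 0}}$ while the fiber $Y$ of $X \to Z$ lies in $T^{\perp_{>0}}$ --- the latter is exactly where Lemma~\ref{susp-coprod} is used, but applied to objects $C_{0i}$ that are genuinely iterated extensions of shifts of $\Add(T)$, hence actually in $\susp(\Add(T))$. Finally $\Hom(\Y,\CZ)=0$ follows by showing that any $X \in \Y$ has trivial $Z$-piece (using (Sb2)) and that $Y$ lies in the suspended coproduct-closed subcategory generated by $T$, against which $\CZ$ is orthogonal. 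If you want to repair your proposal, you should abandon the route through (S2) and instead construct the approximation triangle directly as in the paper.
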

\begin{proof}
    We prove only the silting result, the cosilting one follows by a completely analogous argument. Put $(\Y,\CZ) = (T^{\perp_{>0}},T^{\perp_{\leq 0}})$ and our goal is to show that this constitutes a t-structure in $\Der R$. Without loss of generality, we may assume that $T$ is a complex of projective $R$-modules concentrated in degrees $-n+1,-n+2,\ldots,-1,0$ for some $n>0$.
    
    First, we show that for any $X \in \Der R$ there is a triangle $Y \to X \to Z \to Y[1]$ with $Y \in \Y$ and $Z \in \CZ$. Put $X_0:=X$ and define inductively a sequence of triangles 
    $$T^{(H_i)}[i] \xrightarrow{h_i} X_i \xrightarrow{f_i} X_{i+1} \to T^{(H_i)}[i+1]$$
    where $H_i = \Hom_{\Der R}(T[i],X_i)$ and $h_i$ is the canonical evaluation morphism. Since $\Hom_{\Der R}(T[i],h_i)$ is surjective for each $i \geq 0$ and by (Sb1), we see that 
    $$\Hom_{\Der R}(T[j],X_i) = 0 ~\textrm{ for all } j=0,1,\ldots,i-1.$$
    This construction defines an inductive system 
    $$X_0 \xrightarrow{f_0} X_{1} \xrightarrow{f_1} X_{2} \xrightarrow{f_2} X_{3} \xrightarrow{f_3} \cdots$$ 
    and we consider its homotopy colimit $Z = \hocolim_{i\geq 0}(X_i)$. Recall that for any $j \geq 0$ we have $Z = \hocolim_{i\geq j}(X_i)$, see \cite[Lemma 1.7.1]{Neeman}. Considering $X_j$ as the trivial homotopy colimit $X_j = \hocolim_{i\geq j}(X_j)$ of an inductive tower consisting of identities on $X_j$, \cite[Lemma 1.6.6]{Neeman}, we obtain a commutative square
    $$
    \begin{CD}
    \coprod_{i \geq j}X_i @>(1-\text{shift}(f_i))>> \coprod_{i \geq j}X_i \\
    @A\coprod_{i \geq j}f_{ji}AA @A\coprod_{i \geq j}f_{ji}AA\\
    \coprod_{i \geq j}X_j @>(1-\text{shift}(\mathrm{Id}_{X_j}))>> \coprod_{i \geq j}X_i \\
    \end{CD}
    $$
    where $f_{ji} = f_{i-1}\circ\cdots\circ f_{j+1}\circ f_j$ for $j<i$ and $f_{jj} =\mathrm{Id}_{X_j}$. For each $0 \leq j \leq i$, let $S_{ji}$ be the cone of $f_{ji}$. Then $S_{ji}$ is an iterated extension of the objects $T[k+1]^{(H_k)}$ where $k=j,j+1,\ldots,i-1$. In particular, since $T \in \Der{R}^{\leq 0}$, we have $S_{ji} \in \Der{R}^{\leq -j-1}$. By \cite[1.1.11]{BBD}, the square above extends to a diagram
    $$
    \begin{CD}
    \coprod_{i \geq j}S_{ji} @>>> \coprod_{i \geq j}S_{ji} @>>> C \\
    @AAA @AAA @AAA \\
    \coprod_{i \geq j}X_i @>(1-\text{shift}(f_i))>> \coprod_{i \geq j}X_i @>>> Z\\
    @A\coprod_{i \geq j}f_{ji}AA @A\coprod_{i \geq j}f_{ji}AA @AAA\\
    \coprod_{i \geq j}X_j @>(1-\text{shift}(\mathrm{Id}_{X_j}))>> \coprod_{i \geq j}X_i @>>> X_j\\
    \end{CD}
    $$
    in which all squares commute and all rows and columns extend to triangles. Since $S_{ji} \in \Der{R}^{\leq -j-1}$, we have $\coprod_{i \geq j}S_{ji} \in \Der{R}^{\leq -j-1}$, and therefore $C \in \Der{R}^{\leq -j-1}$. Using that $\Hom_{\Der R}(T,\Der{R}^{\leq -n})=0$, we argue from the rightmost vertical triangle of the diagram that $\Hom_{\Der R}(T[i],Z) \cong \Hom_{\Der R}(T[i],X_{i+n+1}) = 0$ for any $i \geq 0$. This shows that $Z \in \CZ = T^{\perp_{\leq 0}}$.

    Now extend the map $X = X_0 \to Z$ to a triangle $Y \to X \to Z \to Y[1]$ and let us show that $Y \in \Y = T^{\perp_{>0}}$. The diagram above puts $Y$ into a triangle $$\textstyle Y \to \coprod_{i \geq 0}S_{0i} \to \coprod_{i \geq 0}S_{0i} \to Y[1].$$ Put $C_{0i} = S_{0i}[-1]$  for all $i > 0$, so that $Y$ is the mapping cone of $\coprod_{i \geq 0}C_{0i} \to \coprod_{i \geq 0}C_{0i}$. 
    
    Recall that $C_{0i}$ is an iterated extension of objects $T^{(H_0)},\ldots,T^{(H_{i-1})}[i-1]$, and therefore $C_{0i} \in \mathrm{susp}(\Add(T)) \subseteq \Y = T^{\perp_{>0}}$, the latter inclusion follows from (Sb1) and the obvious closure properties of $\Y$. It remains to show that also the coproduct $\coprod_{i \geq 0}C_{0i}$ belongs to $\Y$. Note that by the assumption on $T$, we clearly have $\Der{R}^{\leq -n} \subseteq \Y$. Put $m = n+1$. For any $i > m$, there is a triangle $C_{0m} \to C_{0i} \to C_{mi} \to C_{0m}[1]$, and we know that $C_{mi} \in \Der{R}^{\leq -m}$. Consider the coproduct triangle $$\textstyle \coprod_{i > m}C_{0m} \to \coprod_{i > m}C_{0i} \to \coprod_{i > m}C_{mi} \to \coprod_{i > m}C_{0m}[1].$$ Then $\coprod_{i > m}C_{mi} \in \Der{R}^{\leq -m} = \Der{R}^{\leq -n-1}$. Since $\Der{R}^{\leq -n} \subseteq \Y$, we see that $\coprod_{i > m}C_{0i} \in \Y$ if and only if $\coprod_{i > m}C_{0m} \cong C_{0m}^{(\omega)} \in \Y$. But the latter inclusion follows from Lemma~\ref{susp-coprod} because $C_{0m} \in \susp(\Add(T)) \subseteq \Y$. Therefore, $\coprod_{i > m}C_{0i} \in \Y$. Since we already know that $\coprod_{i \leq m}C_{0i} \in \Y$, we showed that $Y \in \Y$.

    Finally we need to show that $\Hom_{\Der R}(\Y,\CZ) = 0$. Take $X \in \Y$, and consider the same triangle $Y \to X \to Z \to Y[1]$ as constructed above. Since $Y, X \in \Y$, also $Z \in \Y$. But then $Z \in \Y \cap \CZ = T^{\perp_{\Z}}$. It follows by (Sb2) that $Z = 0$ and so $X \cong Y$. By the construction, $Y$ belongs to the smallest suspended and coproduct closed subcategory of $\Der R$ generated by $T$ (since we constructed $Y$ from $T$ by using extensions, $[1]$, and coproducts). Since $\Hom_{\Der R}(T,\CZ) = 0$, this shows $\Hom_{\Der R}(\Y,\CZ) = 0$, which in turn renders $(\Y,\CZ)$ a t-structure.
\end{proof}

We record here for further use a consequence of the above result that was already observed in \cite{PoS}.

\begin{cor}\cite[Corollary 3.6]{PoS} \label{Corollary 3.6-PoS}
Suppose that $T\in \Mod R$ is a module. Then $T$ is $n$-tilting (for a positive integer $n$) if and only if  
\begin{enumerate}[{\rm (i)}]
        \item $T$ is of projective dimension at most $n$;
				\item $\Add(T)\subseteq T^{\perp_{>0}}$;
        \item $T$ generates $\Der R$, that is $T^{\perp_{\Z}}=\{0\}$. 
\end{enumerate}
\end{cor}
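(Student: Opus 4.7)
The plan is to derive the corollary as a direct specialization of Theorem \ref{char-silt-bounded} to the case when the silting object happens to be a module viewed as a stalk complex concentrated in degree zero. First I would observe that, for such a stalk complex $T$, condition (i) that $T$ has projective dimension at most $n$ is equivalent to $T$ being isomorphic in $\Der R$ to a bounded complex of projective $R$-modules, since a finite projective resolution of $T$ of length $n$ serves as such a complex. This identifies the ambient hypothesis of Theorem \ref{char-silt-bounded} with (i), making the theorem directly applicable.

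For the forward implication, I would use that an $n$-tilting module is in particular a silting object in $\Der R$ (viewed as a stalk complex), so by Lemma \ref{char-silt} it satisfies (S1) $T\in T^{\perp_{>0}}$ and (S2) $T^{\perp_{>0}}$ is closed under coproducts. Then any object in $\Add(T)$ is a direct summand of some coproduct $T^{(I)} \in T^{\perp_{>0}}$, and since $T^{\perp_{>0}}$ is closed under summands, we conclude $\Add(T) \subseteq T^{\perp_{>0}}$, giving (ii). Condition (iii) is exactly (S3), and (i) is part of the classical definition of an $n$-tilting module.

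For the converse, I would assume (i), (ii), (iii). By (i), $T$ is isomorphic in $\Der R$ to a bounded complex of projective modules, so Theorem \ref{char-silt-bounded} applies, and its hypotheses (Sb1) and (Sb2) are literally our conditions (ii) and (iii). Hence $T$ is a silting object in $\Der R$. Since $T$ is a module of projective dimension at most $n$, it is an $n$-tilting module in the classical sense via the standard identification of $n$-tilting modules with silting stalk complexes of finite projective dimension, as in \cite{AMV:2015} and \cite{Wei-isr}.

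The argument is essentially a translation; the only nontrivial point is the replacement of the coproduct-closure of the whole class $T^{\perp_{>0}}$ by the weaker condition $\Add(T) \subseteq T^{\perp_{>0}}$, which is precisely the content of Theorem \ref{char-silt-bounded}. The main obstacle, if any, is purely terminological: ensuring that the classical definition of $n$-tilting module being used is equivalent to "silting stalk complex of projective dimension at most $n$"; this is well-established in the cited literature, so no new technical work is required.
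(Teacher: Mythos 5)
Your proposal is correct and follows essentially the same route as the paper's proof, which is simply to combine Theorem \ref{char-silt-bounded} with the identification of $n$-tilting modules with bounded silting stalk complexes of projective dimension at most $n$ (the paper cites \cite[Corollary 3.7]{Wei-isr} for this). You have merely spelled out in detail the two implications that the paper leaves implicit, and the forward direction and the terminological reconciliation you highlight are exactly the points the paper's one-line proof relies on.
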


\begin{proof}
We apply Theorem \ref{char-silt-bounded} together with \cite[Corollary 3.7]{Wei-isr}.   
\end{proof}

\section{Transfer of (co)silting objects using ring morphisms} \label{sect-ascend}

In this section we will use the following

\begin{setting}\label{setting-1}  Let $\lambda:R\to S$ be a morphism of commutative rings. Suppose that $A$ is an $R$-algebra such that $R$ is central and that $B=A\otimes_R S$. Then we have a ring morphism $A\to B$. The covariant, respectively contravariant, extension functor and the restriction functor induced by this morphism (we use the terminology from \cite{Ca}) and their derivates (see \cite{Ye}) can be described, up to natural isomorphism, in the following ways: 
\begin{itemize}
\item $-\otimes_A B\cong -\otimes_R S:\Mod A\to \Mod B$ (the covariant extension functor);
\item $-\lotimes_A B:\Der A\to \Der B$ (the derived covariant extension functor);
{if we assume that $A$ or $S$ is flat over $R$ then we have a natural isomorphism 
\begin{align*}
  (-\lotimes_AB)&\cong(-\lotimes_A(A\otimes_RS))\cong(-\lotimes_A(A\lotimes_RS))\\ &\cong((-\lotimes_AA)\lotimes_RS)\cong(-\lotimes_RS).  
\end{align*}}
\item $\Hom_A(B,-)\cong \Hom_R(S,-):\Mod A\to \Mod B$ (the contravariant extension functor);
\item $\RHom_A(B,-):\Der A\to \Der B$ (the derived  contravariant extension functor); {again, the supplementary assumption that $A$ or $S$ is flat over $R$ allows us to compute 
\begin{align*}
  \RHom_A(B,-)&\cong\RHom_A(S\otimes_RA,-)\cong\RHom_A(S\lotimes_R A,-)\\ &\cong\RHom_R(S,\RHom_A(A,-))\cong\RHom_R(S,-).  
\end{align*}}
\item $\Hom_B(B,-)\cong -\otimes_B B\cong \Hom_S(S,-)\cong -\otimes_S S:\Mod B\to \Mod A$ (the restriction functor);
\item $\RHom_B(B,-)\cong -\lotimes_B B\cong \RHom_S(S,-)\cong -\lotimes_S S:\Der B\to \Der A$ (the derived restriction functor).
\end{itemize}  

Note that the restriction functors do not change the module/complex. Therefore, there is no danger of confusion if we write $X$ instead of $\Hom_B(B,X)$ or $\RHom_B(B,X)$, respectively. 
Moreover, if $\A$ is a subcategory in $\Der A$, we denote by $\A\bigcap \Der B$ the subcategory of $\Der B$ defined by $$\A\bigcap \Der B=\{X\in \Der B\mid \RHom_B(B,X)=X\lotimes_B B\in\A\}.$$
\end{setting}

\subsection{The extension of scalars functors applied on (co)silting objects} We will prove that the derived covariant (contravariant) extension functors defined above preserve the silting (respectively, pure-injective cosilting) objects from $\Der R$. 

We start by observing that \cite[Lemma 1.10]{AJS} and \cite[Proposition 2.3]{H20} are still valid in our setting (with \textit{verbatim} proofs). 

\begin{lemma}\label{r-comm}
Let $R$ be commutative and $A$ be an $R$-algebra such that $R$ is central. Let $(\U,\V)$ be a t-structure in $\Der A$. For every $X\in\Dr^{\leq0}(R)$ we have:
\begin{enumerate}[{\rm (1)}]
    \item If $U\in\U$ then $U\lotimes_RX\in\U$.
    \item If $V\in\V$ then $\RHom_R(X,V)\in\V$.
\end{enumerate}
\end{lemma}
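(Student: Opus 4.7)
The plan is to use that the aisle $\U$ of the t-structure is closed under extensions, coproducts, positive shifts, and — as a consequence — homotopy colimits of inductive sequences, while the coaisle $\V$ is correspondingly closed under extensions, products, negative shifts, and homotopy limits of towers. Then I would prove both assertions by resolving $X$ and reducing to bounded complexes of free modules, where the result is obtained by induction on length.

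Concretely, since $X \in \Dr^{\leq 0}(R)$, pick a K-flat resolution $P \to X$ with $P$ a complex of free $R$-modules concentrated in degrees $\leq 0$ (such $P$ is both K-flat and K-projective). Let $P_{\geq -n} := \sigma^{\geq -n}(P)$ be the subcomplex supported in degrees $[-n,0]$. The chain-level inclusion $P_{\geq -n} \hookrightarrow P_{\geq -n-1}$ has cokernel $(P^{-n-1})[n+1]$, a free module placed in degree $-n-1$, producing short exact sequences of complexes and hence triangles
\[
P_{\geq -n} \to P_{\geq -n-1} \to (P^{-n-1})[n+1] \to P_{\geq -n}[1]
\]
in $\Der R$. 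Moreover, $P$ is the filtered colimit of the $P_{\geq -n}$ along these inclusions, which in $\Der R$ gives $P \cong \hocolim_n P_{\geq -n}$.

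For (1), I would show by induction on $n$ that $U \lotimes_R P_{\geq -n} \in \U$. The base case $n=0$ reduces to $U^{(I)} \in \U$, which holds by coproduct closure of $\U$; the induction step applies $U \lotimes_R -$ to the triangle above, observing that $U \lotimes_R (P^{-n-1})[n+1] \cong U^{(J)}[n+1] \in \U$ by coproduct and positive-shift closure, hence by extension closure the middle term lies in $\U$. Finally, because $U \lotimes_R -$ preserves coproducts and triangles, $U \lotimes_R X \cong \hocolim_n (U \lotimes_R P_{\geq -n})$; from the defining triangle $\coprod_n Y_n \to \coprod_n Y_n \to \hocolim_n Y_n \to$ one sees immediately that the aisle, being closed under coproducts, positive shifts, and extensions, is closed under hocolim of sequences. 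This gives $U \lotimes_R X \in \U$.

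Part (2) is the exact dual. Applying $\RHom_R(-,V)$ to the same triangles gives towers fitting $\RHom_R(X,V) \cong \holim_n \RHom_R(P_{\geq -n}, V)$, and an inductive argument shows each stage is in $\V$: the base is $\RHom_R(R^{(I)},V) \cong V^I \in \V$ by product closure, while the induction uses $\RHom_R((P^{-n-1})[n+1], V) \cong V^J[-n-1] \in \V$ by product and negative-shift closure combined with extension closure in $\V$. The holim of a tower in $\V$ sits in a triangle $\holim_n Y_n \to \prod_n Y_n \to \prod_n Y_n \to$, so closure of $\V$ under products, negative shifts, and extensions gives closure under holim. Thus $\RHom_R(X,V) \in \V$. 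The only real subtlety to keep in mind is that both the hocolim- and holim-closure of the aisle and coaisle follow for free from the basic closure properties of a t-structure, so no extra assumption is needed beyond those in the statement.
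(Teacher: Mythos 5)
Your proof is correct and follows essentially the same route as the arguments in \cite[Lemma 1.10]{AJS} and \cite[Proposition 2.3]{H20}, to which the paper defers: resolve $X$ by a complex of free modules in nonpositive degrees, induct over brutal truncations using the closure of $\U$ (resp.\ $\V$) under coproducts, positive shifts and extensions (resp.\ products, negative shifts and extensions), and pass to the limit via homotopy (co)limits. The paper supplies no independent proof, remarking only that those arguments carry over verbatim to $\Der A$.
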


In the next theorem we will use the notation presented in Remark \ref{TTF-notations}.

\begin{thm}\label{ascend} Assume that we are in the hypotheses of Setting \ref{setting-1}, {and $A$ or $S$ is flat over $R$}. 
\begin{enumerate}[{\rm (I)}] \item Suppose that $T\in\Der A$ is silting. 
\begin{enumerate}[{\rm (1)}]
    \item The complex $T\lotimes_RS$ is silting in $\Der B$. 
    \item $\Y_{T\lotimes_R S}= \Y_T \bigcap \Der B$, and $\CZ_{T\lotimes_R S}= \CZ_T \bigcap \Der B$.
    \item $\X_T\lotimes_R S\subseteq \X_{T\lotimes_R S}$, and $\Y_T\lotimes_R S\subseteq \Y_{T\lotimes_R S}$.
    \item If $T$ is bounded then $T\lotimes_RS$ has the same property.  
    \end{enumerate} 
\item Suppose that $C\in\Der A$ is pure-injective cosilting.
\begin{enumerate}[{\rm (1)}]
    \item The complex $\RHom_R(S,C)$ is pure-injective cosilting in $\Der B$. 
    \item $\U_{\RHom_R(S,C)}= \U_C \bigcap \Der B$, and $\V_{\RHom_R(S,C)}= \V_C \bigcap \Der B$.
    \item $\RHom_R(S,\V_C)\subseteq \V_{\RHom_R(S,C)}$, and $\RHom_R(S,\W_C)\subseteq \W_{\RHom_R(S,C)}$.
    \item If $C$ is bounded cosilting, then $\RHom_R(S,C)$ has the same property.
 \end{enumerate}
\end{enumerate}
\end{thm}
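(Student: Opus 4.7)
The plan is to verify, in each of (I) and (II), the characterizations from Lemma~\ref{char-silt}, using the two adjunctions
\[
\Hom_{\Der B}(X \lotimes_A B, Y) \cong \Hom_{\Der A}(X, Y), \qquad \Hom_{\Der B}(Y, \RHom_A(B, X)) \cong \Hom_{\Der A}(Y, X)
\]
between $\Der A$ and $\Der B$ (where $Y \in \Der B$ is viewed in $\Der A$ via restriction, and the flatness hypothesis identifies $-\lotimes_A B \cong -\lotimes_R S$ and $\RHom_A(B,-) \cong \RHom_R(S,-)$), together with Lemma~\ref{r-comm} to transport membership in t-structures along these extension functors.

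For (I), set $T' = T \lotimes_R S$. The first adjunction applied with $X = T$ yields the identity
\[
(T')^{\perp_I} = T^{\perp_I} \cap \Der B
\]
for every $I \subseteq \Z$. Taking $I = \{n > 0\}$ and $I = \{n \leq 0\}$ gives (I)(2). Conditions (S2) and (S3) for $T'$ now follow, since restriction preserves coproducts and $T^{\perp_{>0}}$ is coproduct-closed, while $T^{\perp_\Z} = 0$ forces $(T')^{\perp_\Z} = 0$. For (S1), I apply Lemma~\ref{r-comm}(1) to the t-structure $(\Y_T, \CZ_T)$ with $S \in \Der{R}^{\leq 0}$: from $T \in \Y_T$ we obtain $T' = T \lotimes_R S \in \Y_T = T^{\perp_{>0}}$, which by the displayed identity is precisely (S1). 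This proves (I)(1). For (I)(3), $\Y_T \lotimes_R S \subseteq \Y_{T'}$ is another application of Lemma~\ref{r-comm}(1), while $\X_T \lotimes_R S \subseteq \X_{T'}$ follows by orthogonality: for $X \in \X_T$ and $Y \in \Y_{T'} = \Y_T \cap \Der B$, the adjunction gives $\Hom_{\Der B}(X \lotimes_R S, Y) \cong \Hom_{\Der A}(X, Y) = 0$ since $Y \in \Y_T = \X_T^{\perp_0}$. Part (I)(4) is immediate: tensoring a bounded complex of projective $A$-modules representing $T$ with $B$ yields a bounded complex of projective $B$-modules representing $T'$.

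Part (II) is dual, using the second adjunction and Lemma~\ref{r-comm}(2) in place of (1). The identifications in (II)(2) and conditions (C1)--(C3) mirror the silting case verbatim. The inclusion $\RHom_R(S, \V_C) \subseteq \V_{\RHom_R(S,C)}$ is Lemma~\ref{r-comm}(2) applied to $(\U_C, \V_C)$; the inclusion $\RHom_R(S, \W_C) \subseteq \W_{\RHom_R(S,C)}$ follows directly from the adjunction combined with $\W_C = \V_C^{\perp_0}$: for $W \in \W_C$ and $Y \in \V_{\RHom_R(S,C)} = \V_C \cap \Der B$, $\Hom_{\Der B}(Y, \RHom_A(B, W)) \cong \Hom_{\Der A}(Y, W) = 0$. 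Part (II)(4) follows because a bounded injective resolution $E^\bullet$ of $C$ yields $\RHom_A(B, E^\bullet) \cong \Hom_A(B, E^\bullet)$, a bounded complex of injective $B$-modules, since coinduction preserves injectivity.

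The one substantive extra step, and what I expect to be the main obstacle, is verifying that $\RHom_A(B, C)$ is pure-injective in $\Der B$. I would use the characterization that $Z$ is pure-injective iff $\Hom(\phi, Z) = 0$ for every phantom $\phi$. Given a phantom $\phi$ in $\Der B$, the key observation is that $D$ compact in $\Der A$ forces $D \lotimes_A B$ compact in $\Der B$, so by the first adjunction $\Hom_{\Der A}(D, \mathrm{Res}(\phi)) \cong \Hom_{\Der B}(D \lotimes_A B, \phi) = 0$, showing that $\mathrm{Res}(\phi)$ is a phantom in $\Der A$. The second adjunction then yields $\Hom_{\Der B}(\phi, \RHom_A(B, C)) \cong \Hom_{\Der A}(\mathrm{Res}(\phi), C) = 0$ by pure-injectivity of $C$, completing the argument.
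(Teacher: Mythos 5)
Your proposal is correct and follows essentially the same route as the paper: verify the conditions of Lemma~\ref{char-silt} using the adjunction $\Hom_{\Der B}(-\lotimes_A B,-)\cong\Hom_{\Der A}(-,-)$ (which immediately gives $(T\lotimes_RS)^{\perp_I}=T^{\perp_I}\bigcap\Der B$, hence (S2), (S3) and part (2)), apply Lemma~\ref{r-comm} with $S\in\Dr^{\leq 0}(R)$ for (S1) and for part (3), and dualize for (II). The only place your argument departs in presentation is the pure-injectivity of $\RHom_A(B,C)$: you give a self-contained phantom argument (restriction preserves phantoms because $-\lotimes_A B$ preserves compacts, and then the second adjunction transfers the vanishing), whereas the paper simply cites \cite[Lemma 5.3]{SS20}; both are fine and your version is a nice explicit unpacking of that citation.
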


\begin{proof} 
(I) (1) The ring homomorphism $\lambda$ induces a structure of $R$-module on $S$, and as a complex of $R$ modules it belongs to $\Dr^{\leq0}(R)$. Since $T$ is silting, it induces a t-structure $(T^{\perp_{>0}}, T^{\perp_{\leq 0}})$ in $\Der A$. From $T\in{T^{\perp_{>0}}}$ it follows by Lemma \ref{r-comm}  that $T\lotimes_R S\in T^{\perp_{>0}}$. 

For every $n\in\Z$ and every $X\in\Der B$, we have the adjunction isomorphism 
\[\Hom_{\Der A}(T,X[n])\cong\Hom_{\Der B}((T\lotimes_RS),X[n]).\]
Using this isomorphism for $n>0$ and $X=T\lotimes_RS$, together with  $T\lotimes_RS\in T^{\perp_{>0}}$, we deduce that the $B$-complex $T\lotimes_RS$ belongs to $(T\lotimes_RS)^{\perp_{>0}}$.

Further, the same isomorphism, applied for $n=0$ and $X=\coprod_{i\in I}X_i$, together with the fact that  (the derived functor of) the restriction of the scalars functor $\Der B\to \Der A$ preserves all coproducts, shows that the class $(T\lotimes_RS)^{\perp_{>0}}\subseteq \Der B$ is closed under coproducts. 

Finally, if $X\in\Der B$ and $X\in(T\lotimes_RS)^{\perp_{\Z}}$, it follows that $\Hom_{\Der A}(T,X[n])=0$ for all $n\in\Z$. Then $X$ is acyclic as an $A$-complex, hence it is acyclic as a $B$-complex.   

(2) These equalities are consequences of the isomorphism
$$\Hom_{\Der B}(T\lotimes_R S,Y)\cong \Hom_{\Der B}(T\lotimes_A A \lotimes_R S,Y)\cong \Hom_{\Der R}(T,\RHom_B(B,Y)).$$

(3) Let $Y\in \Y_{T}$. Then $\RHom_B(B,Y\lotimes_R S)=Y\lotimes_R S\in \CY_T$. For all $i>0$ we have
\begin{align*} & \Hom_{\Der B}(T\lotimes_R S, Y\lotimes_R S[i]) \cong \Hom_{\Der B}(T\lotimes_A B, Y\lotimes_R S[i]) \\  
 \cong &  \Hom_{\Der A}(T, \RHom_B(B,Y\lotimes_R S)[i])\cong  \Hom_{\Der A}(T, Y\lotimes_R S[i])=0.\end{align*} 

Let $X\in \X_T$ and $Y\in \Y_{T\lotimes_R S}$. Then $\RHom_B(B,Y)\in \Y_T$, hence the property $X\lotimes_R S=X\lotimes_A B\in \X_{T\lotimes_R S}$ can be obtained by applying the adjunction isomorphism.

(4) If $T$ is bounded, then obviously the same property holds true for $T\lotimes_RS$.

(II) The functor $\RHom_{A}(B,-)$ preserves all products, hence it preserves the pure-injectivity property, \cite[Lemma 5.3]{SS20}. Therefore, using the natural isomorphisms 
$$\Hom_{\Der A}(U\lotimes_S S,V)\cong \Hom_{\Der B}(U,\RHom_A(B,V)),$$ a proof can be done by following the lines of the proof for (I). 
\end{proof}

\subsection{The transfer of (co)silting complexes of (co)finite type}  In the following, we will study the transfer of (co)silting complexes of (co)finite type. In this case, we have more connections between the associated TTF triples.

\begin{thm}\label{new-ascend-(co)finite}
Assume that we are in Setting \ref{setting-1}, {and $A$ or $S$ is flat over $R$}. 
\begin{enumerate}[{\rm (I)}]
\item If $C\in\Der A$ is cosilting of cofinite type then $\RHom_R(S,C)$ is cosilting of cofinite type in $\Der B$.
\item  If $T\in \Der A$ is silting of finite type then $T\lotimes_R S$ is of finite type in $\Der B$. 
\end{enumerate}
\end{thm}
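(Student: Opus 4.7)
The plan is to follow a uniform pattern for both parts: transfer a compact generating set for the relevant TTF-triple along the $(F,G)$-adjunction, where $F := {-}\lotimes_R S : \Der A \to \Der B$ and $G : \Der B \to \Der A$ is the restriction of scalars. The crucial input is that $G$ preserves arbitrary coproducts (coproducts in derived categories are computed degree-wise and $G$ does not alter the underlying complex), which implies that $F$ sends compact objects in $\Der A$ to compact objects in $\Der B$.

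For (II), I would pick a set $\mathcal{C}$ of compact objects in $\Der A$ witnessing that $T$ is of finite type, so $\Y_T = \mathcal{C}^{\perp_0}$. Theorem~\ref{ascend}(I)(1) gives that $T \lotimes_R S$ is silting in $\Der B$ with middle class $\Y_{T\lotimes_R S} = \Y_T \cap \Der B$ by Theorem~\ref{ascend}(I)(2). Combining this identification with the $(F,G)$-adjunction $\Hom_{\Der A}(C, G(X)) \cong \Hom_{\Der B}(F(C), X)$ yields
\[ \Y_{T \lotimes_R S} = \{X \in \Der B \mid \Hom_{\Der B}(C \lotimes_R S, X) = 0 \textrm{ for all } C \in \mathcal{C}\} = F(\mathcal{C})^{\perp_0}. \]
Since $F(\mathcal{C})$ is a set of compact objects in $\Der B$, this exhibits the TTF-triple $(\X_{T \lotimes_R S}, \Y_{T \lotimes_R S}, \CZ_{T \lotimes_R S})$ as compactly generated, so $T \lotimes_R S$ is of finite type.

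For (I), I would write $\V_C = \mathcal{D}^{\perp_0}$ for a set $\mathcal{D}$ of compacts in $\Der A$. By Theorem~\ref{ascend}(II)(1)--(2), $\RHom_R(S,C)$ is pure-injective cosilting in $\Der B$ with middle class $\V_{\RHom_R(S,C)} = \V_C \cap \Der B$. The identical adjunction calculation produces $\V_{\RHom_R(S,C)} = F(\mathcal{D})^{\perp_0}$, proving cofinite type.

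No serious obstacle is anticipated; the statement is essentially a bookkeeping consequence of Theorem~\ref{ascend} together with the preservation of compactness by $F$. The one subtlety worth highlighting is that in the cosilting case the object itself is transferred by the contravariant $\RHom_R(S,-)$, while its compact generating set is still carried across by the covariant $F$: this is natural because the orthogonality defining compact generation of the TTF-triple is controlled by the adjoint pair $(F,G)$, regardless of which functor was used to move $C$ itself.
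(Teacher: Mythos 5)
Your proposal is correct and follows essentially the same route as the paper: both use that the restriction functor preserves coproducts (hence $-\lotimes_R S$ preserves compactness), together with the $(F,G)$-adjunction and the identification of the transferred (co)silting class from Theorem~\ref{ascend}, to exhibit the image of a compact generating set as a compact generating set for the transferred TTF-triple. The only small point worth making explicit is that, before invoking Theorem~\ref{ascend}(II), one should recall that cosilting objects of cofinite type are automatically pure-injective, which is precisely how the paper opens the argument for part (I).
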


\begin{proof}
(I) Since all cosilting objects of cofinite type are pure-injective, it follows that $\RHom_R(S,C)\in\Der B$ is cosilting. 

There exists a set of compact objects $\mathcal{P}\subseteq \U_C$ such that {$\V_C=\mathcal{P}^{\perp_0}$}. Using the fact the the restriction functor preserves coproducts, it follows that $\mathcal{P}\lotimes_R S=\{P\lotimes_R S\mid P\in \mathcal{P}\}$ is a set of compact objects from $\Der B$.

We have $V\in \V_{\RHom_R(S,C)}$ if and only if $V\lotimes_S S=\RHom_S(S,V)\in \V_C$, and this is equivalent to $$0=\Hom_{\Der R}(\mathcal{P}, \RHom_S(S,V))\cong \Hom_{\Der S}(\mathcal{P}\lotimes_R S, V).$$ This implies that {$\V_{\RHom_R(S,C)}=(\mathcal{P}\lotimes_R S)^{\perp_0},$} hence $\RHom_R(S,C)$ is of cofinite type. 

(II) The proof is similar to the proof used for the cosilting case. 
\end{proof}

\begin{lemma}\label{UT}
Suppose that we are in the hypothesis of Setting \ref{setting-1}. Assuming that $\lambda:R\to S$ is faithfully flat, $T\in \Der A$, and $X\in \Der{A^{\mathrm{op}}}$ then 
$X\in{^{\perp_I}(T^+)}$ if and only if $X\lotimes_R S\in{^{\perp_I}[(T\lotimes_RS)^+]}$ (in $\Der{B\op}$).
\end{lemma}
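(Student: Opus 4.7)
The plan is to reduce both sides of the stated equivalence to the vanishing of homologies of derived tensor products and then invoke the faithful flatness of $\lambda$. First, I will apply Lemma \ref{t-str-H}(b) over the ring $A$: the condition $X \in {^{\perp_I}(T^+)}$ becomes equivalent to $H^n(T \lotimes_A X) = 0$ for all $n \in -I$. Applying the same lemma over the ring $B$ to the pair $T\lotimes_R S \in \Der B$ and $X\lotimes_R S \in \Der{B\op}$, the condition $X \lotimes_R S \in {^{\perp_I}[(T\lotimes_R S)^+]}$ becomes equivalent to $H^n((T\lotimes_R S) \lotimes_B (X\lotimes_R S)) = 0$ for all $n \in -I$.

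The key technical step is to establish the base change isomorphism
\[
(T\lotimes_R S) \lotimes_B (X\lotimes_R S) \;\cong\; (T\lotimes_A X) \lotimes_R S
\]
in $\Der S$. Since $\lambda$ is faithfully flat, $S$ is flat over $R$, hence $B = A\otimes_R S$ is flat over $A$; as recorded in Setting \ref{setting-1}, this gives $T \lotimes_R S \cong T \lotimes_A B$ as right $B$-complexes and, dually, $X \lotimes_R S \cong B \lotimes_A X$ as left $B$-complexes. Plugging these in and using associativity of the derived tensor product together with $B\lotimes_B B \cong B$ will yield the displayed isomorphism.

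Finally, since $S$ is flat over $R$ we have $H^n((T\lotimes_A X)\lotimes_R S) \cong H^n(T\lotimes_A X) \otimes_R S$, and the faithful flatness of $\lambda$ ensures this $S$-module vanishes if and only if $H^n(T\lotimes_A X) = 0$. Chaining the equivalences will yield the claim. The only delicate point I anticipate is the bookkeeping of left/right module structures in the base change isomorphism; no deeper obstruction appears.
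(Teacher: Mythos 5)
Your proposal is correct and follows essentially the same route as the paper: both reduce the assertion via Lemma \ref{t-str-H}(b) (applied over $A$ and over $B$) to a flat base-change isomorphism $H^n\bigl((T\lotimes_RS)\lotimes_B(X\lotimes_RS)\bigr)\cong H^n(T\lotimes_AX)\otimes_RS$ and then invoke faithful flatness of $S$ over $R$. The paper states this chain of isomorphisms without spelling out the bimodule bookkeeping you flag; your outline just makes that step slightly more explicit.
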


\begin{proof}
Using Lemma \ref{t-str-H}, the equivalence is a consequence of the isomorphisms
$$H^n((T\lotimes_R S)\lotimes_S (X\lotimes_R S))\cong H^n(T\lotimes_R X\lotimes_R S)\cong H^n(T\lotimes_R X)\otimes_R S,$$
that hold for all $X\in \Der R$.
\end{proof}


\begin{prop}\label{YTtoYTS}
Suppose that we are in the hypothesis of Setting \ref{setting-1} and that $\lambda:R\to S$ is faithfully flat. Let $T\in\Der{A}$ be a silting complex of finite type. Using the notations from Remark \ref{TTF-notations}, the following are true:
\begin{enumerate}[{\rm (1)}]
\item $\U_{T^+}=\{X\in \Der{A^{\mathrm{op}}}\mid X\lotimes_R S\in \U_{(T\lotimes_R S)^+}\}$;
\item $\V_{T^+}=\{X\in \Der{A^{\mathrm{op}}}\mid X\lotimes_R S\in \V_{(T\lotimes_R S)^+}\}$;
\item $\Y_{T}=\{Y\in\Der A\mid Y\lotimes_R S\in\Y_{T\lotimes_RS}\}.$
\end{enumerate}
\end{prop}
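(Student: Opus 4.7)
The plan is to observe that parts (1) and (2) follow directly from Lemma \ref{UT}, while (3) requires the finite-type hypothesis and a passage through compact generators.

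For (1) and (2): by the definition of the cosilting t-structure one has $\U_{T^+}={^{\perp_{\leq 0}}(T^+)}$ and $\V_{T^+}={^{\perp_{>0}}(T^+)}$, and analogously for the cosilting object $(T\lotimes_R S)^+$ in $\Der{B\op}$. Applying Lemma \ref{UT} with $I=\{i\in\Z\mid i\leq 0\}$ immediately yields (1), and with $I=\{i\in\Z\mid i>0\}$ yields (2). Note that these two parts do not in fact use the finite-type hypothesis on $T$.

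For (3): since $T$ is silting of finite type, the TTF triple $(\X_T,\Y_T,\CZ_T)$ is generated by a set $\mathcal{S}$ of compact objects of $\Der A$, i.e.\ $\Y_T=\mathcal{S}^{\perp_0}$. My first step is to establish the analogous description $\Y_{T\lotimes_R S}=(\mathcal{S}\lotimes_R S)^{\perp_0}$. Each $C\lotimes_R S$ with $C\in\mathcal{S}$ is compact in $\Der B$, and for any $W\in\Der B$ the extension-restriction adjunction gives
\[\Hom_{\Der B}(C\lotimes_R S,W)\cong\Hom_{\Der A}(C,W),\]
where on the right $W$ is viewed in $\Der A$ via restriction; combined with Theorem \ref{ascend}(I)(2), which says $\Y_{T\lotimes_R S}=\Y_T\cap\Der B$, this yields the claim. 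With this description in hand, for $Y\in\Der A$ I chain together the equivalences
\begin{align*}
Y\lotimes_R S\in\Y_{T\lotimes_R S}
&\iff \Hom_{\Der B}(C\lotimes_R S,Y\lotimes_R S)=0 \text{ for all } C\in\mathcal{S}\\
&\iff \Hom_{\Der A}(C,Y)\otimes_R S=0 \text{ for all } C\in\mathcal{S}\\
&\iff \Hom_{\Der A}(C,Y)=0 \text{ for all } C\in\mathcal{S}\\
&\iff Y\in\Y_T,
\end{align*}
where the second equivalence is Lemma \ref{lemma-zim} (applicable precisely because each $C\in\mathcal{S}$ is compact) and the third uses the faithful flatness of $\lambda$.

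The main obstacle is that $T$ itself is not in general compact, so Lemma \ref{lemma-zim} cannot be applied to $T$ directly to obtain (3); the finite-type hypothesis circumvents this by providing a compact generating set for the relevant orthogonal, after which the standard flat-base-change behavior of $\Hom$-groups against compact objects finishes the job. The remaining care is simply bookkeeping to make sure the restriction-extension identifications match the descriptions of $\Y_T\cap\Der B$ in Theorem~\ref{ascend}(I)(2).
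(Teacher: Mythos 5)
Your proof is correct and takes essentially the same approach as the paper: parts (1) and (2) via Lemma~\ref{UT}, and part (3) via compact generation of $\Y_T$, the base-change isomorphism of Lemma~\ref{lemma-zim}, Theorem~\ref{ascend}, and faithful flatness. The only cosmetic difference is that you organize (3) as a single chain of equivalences after first describing $\Y_{T\lotimes_R S}$ as $(\mathcal{S}\lotimes_R S)^{\perp_0}$ using Theorem~\ref{ascend}(I)(2) and adjunction, whereas the paper proves the two inclusions separately, invoking Theorem~\ref{ascend}(I)(3) for one direction.
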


\begin{proof}
(1) and (2) follow from Lemma \ref{UT}.

(3) The inclusion $\Y_{T}\subseteq\{Y\in\Der A\mid Y\lotimes_R S\in\Y_{T\lotimes_RS}\}$ already appeared in Theorem \ref{ascend}.
 
Conversely, since both $T$ and $T\lotimes_R S$ are of finite type, the associated TTF-triples are compactly generated. Let $Y\in\Der A$ be such that $Y\lotimes_R S\in\Y_{T\lotimes_RS}.$ If $C\in \CX_{T}$ is a compact object then $C\lotimes_R S\in \CX_{T\lotimes_R S}$ by Theorem \ref{ascend}, hence \[\Hom_{\Der{A}}(C\lotimes_R S, Y\lotimes_R S)=0.\] Using the isomorphism presented in Lemma \ref{lemma-zim} we get:
 \[\Hom_{\Der{R}}(C, Y)\otimes_RS=0,\] hence $\Hom_{\Der{R}}(C, Y)=0$, since $S$ is faithfully flat. It follows $Y\in\Y_T$.
\end{proof}

\begin{cor}\label{silt-inj}
Suppose that we are in the hypothesis of Setting \ref{setting-1} and that $\lambda:R\to S$ is faithfully flat. If $T_1,T_2\in \Der A$ are silting objects of finite type such that the $\Der B$ silting objects $T_1\lotimes_R S$ and $T_2\lotimes_R S$ are equivalent then $T_1$ and $T_2$ are equivalent.
\end{cor}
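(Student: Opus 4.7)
The plan is to reduce the equivalence of $T_1$ and $T_2$ to the equality of their silting classes $\Y_{T_1}$ and $\Y_{T_2}$, and then to derive this equality directly from Proposition \ref{YTtoYTS}(3). Recall from the definitions (and Remark \ref{functors-classes}) that two silting objects are equivalent precisely when they induce the same t-structure, i.e.\ when their associated coaisles coincide.

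The key observation is that Proposition \ref{YTtoYTS}(3) gives, for $i=1,2$, an explicit description
\[
\Y_{T_i} \;=\; \{\,Y \in \Der A \mid Y \lotimes_R S \in \Y_{T_i \lotimes_R S}\,\},
\]
where we use that each $T_i$ is silting of finite type. By hypothesis, $T_1\lotimes_R S$ and $T_2\lotimes_R S$ are equivalent silting objects in $\Der B$, so $\Y_{T_1 \lotimes_R S} = \Y_{T_2 \lotimes_R S}$. Substituting this equality in the two descriptions above yields $\Y_{T_1} = \Y_{T_2}$, which is exactly what equivalence of $T_1$ and $T_2$ means.

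There is really no obstacle left to overcome once Proposition \ref{YTtoYTS}(3) is in hand: the faithful flatness of $\lambda$ and the finite-type assumption have already been absorbed into that statement (via the compact generation of the associated TTF triples and Lemma \ref{lemma-zim}). The only small point to note is that the argument genuinely uses the finite-type hypothesis on \emph{both} $T_1$ and $T_2$ in order to invoke Proposition \ref{YTtoYTS}(3) for each of them; without finite type one would not have compact generators at one's disposal to run the faithful-flatness descent on $\Hom$-vanishing. Consequently the proof reduces to a one-line application of the proposition.
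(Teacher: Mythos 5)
Your argument is correct and is exactly the one the paper intends: the corollary is stated immediately after Proposition~\ref{YTtoYTS} with no separate proof, and applying Proposition~\ref{YTtoYTS}(3) to $T_1$ and to $T_2$ and using $\Y_{T_1\lotimes_R S}=\Y_{T_2\lotimes_R S}$ is the whole content. One small terminological slip: in the paper's conventions $\Y_T=T^{\perp_{>0}}$ is the \emph{aisle} of the silting t-structure $(\Y_T,\CZ_T)$, not the coaisle, though this has no effect on the argument since the equality $\Y_{T_1}=\Y_{T_2}$ in any case pins down the entire TTF-triple and hence the equivalence.
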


\subsection{Ascend of $n$-tilting modules for $R$-algebras}
We present here generalizations, for $R$-algebras, of some results proved in \cite{HST20} that are useful in our approach. 

If $\CS$ is a family of modules from $\Mod A$ ($A$ is a ring) then  $(\CK_{\CS},\CL_{\CS})$ denotes the cotorsion theory generated by $\CS$ i.e., $\CL=\Ker\prod_{n\geq 1}\Ext_A^n(\CS,-)$. {Recall that an $A$-module is \textit{strongly finitely presented} if it admits a projective resolution consisting of finitely generated projective $A$-modules.}

The proof of the following result follows verbatim from the argument presented in \cite[Proposition 2.3]{HST20}.

\begin{lemma}\label{HST20-2.3} Suppose that we are in the hypothesis of Setting \ref{setting-1} and that $\lambda:R\to S$ is flat.
If $\CS$ is a family of strongly finitely presented modules in $\Mod A$ then $\CK_{\CS}\otimes_R S\subseteq \CK_{\CS\otimes_R S}$ and $\CL_{\CS}\otimes_R S\subseteq \CL_{\CS\otimes_R S}$. 

Moreover, if $\lambda$ is faithfully flat, then
$$\CK_\CS=\{X\in \Mod A\mid X\otimes_R S\in \CK_{\CS\otimes_R S}\},$$ and $$\CL_\CS=\{X\in \Mod A\mid X\otimes_R S\in \CL_{\CS\otimes_R S}\}.$$
\end{lemma}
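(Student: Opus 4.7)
The approach rests on two base-change compatibilities for $\Ext$, both deduced from the hypotheses. First, since each $M\in\CS$ admits a projective resolution $P_\bullet\to M$ in $\Mod A$ by finitely generated projectives, flatness of $S$ over $R$ makes $P_\bullet\otimes_R S\to M\otimes_R S$ a resolution of the same type in $\Mod B$. Combining the standard identification $\Hom_A(P,X)\otimes_R S\cong \Hom_B(P\otimes_R S,X\otimes_R S)$ for $P$ finitely generated projective with flatness of $S$ commuting with cohomology yields
\[ \Ext^n_A(M,X)\otimes_R S\;\cong\;\Ext^n_B(M\otimes_R S,\,X\otimes_R S)\qquad(n\geq 0). \]
Second, the tensor-restriction adjunction combined with the fact that $-\otimes_R S$ sends projective $A$-resolutions to projective $B$-resolutions gives
\[ \Ext^n_B(Y\otimes_R S,\,W)\;\cong\;\Ext^n_A(Y,\,W)\qquad(n\geq 0), \]
for any $Y\in\Mod A$ and $W\in\Mod B$, the latter viewed on the right as an $A$-module via restriction.

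Granted these two compatibilities, three of the four assertions are straightforward. Ascent of $\CL_\CS$ follows directly from the first isomorphism for $n\geq 1$. For ascent of $\CK_\CS$: take $Y\in\CK_\CS$ and $W\in\CL_{\CS\otimes_R S}$; the second isomorphism applied with $Y=M\in\CS$ shows that $W$ restricts to a module in $\CL_\CS$, so the second isomorphism again converts $\Ext^1_A(Y,W)=0$ into $\Ext^1_B(Y\otimes_R S,W)=0$. Descent of $\CL_\CS$ under faithful flatness is the reflection of the first isomorphism: $\Ext^n_A(M,X)\otimes_R S=0$ forces $\Ext^n_A(M,X)=0$.

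I expect descent of $\CK_\CS$ to be the main obstacle. The plan is to use completeness of the cotorsion pair $(\CK_\CS,\CL_\CS)$, which holds since $\CS$ is a set (Eklof--Trlifaj). This provides, for each $X\in\Mod A$, a special precover sequence $0\to L\to K\to X\to 0$ with $K\in\CK_\CS$ and $L\in\CL_\CS$. Tensoring yields an analogous sequence in $\Mod B$ whose terms lie in the corresponding classes, and the assumption $X\otimes_R S\in\CK_{\CS\otimes_R S}$ forces it to split. The remaining, subtle task is to descend this splitting, equivalently to prove $\Ext^1_A(X,L)=0$. The planned route is to combine the vanishing $\Ext^1_A(X,L\otimes_R S)\cong \Ext^1_B(X\otimes_R S,L\otimes_R S)=0$ from the second isomorphism with the short exact sequence $0\to L\to L\otimes_R S\to Q\to 0$ in $\Mod A$ (exact because $R\hookrightarrow S$ is a pure $R$-monomorphism under faithful flatness), using that $Q\in\CL_\CS$ -- a consequence of $\CL_\CS$ being closed under cokernels of monomorphisms between its members, which follows from the higher-$\Ext$ definition. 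The long exact $\Ext$ sequence then presents $\Ext^1_A(X,L)$ as a quotient of $\Hom_A(X,Q)$, and pushing the vanishing through will require a further analysis of descent data for the splitting, or iteration along the Amitsur cosimplicial object of $R\to S$.
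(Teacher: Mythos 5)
Your two base-change compatibilities
\[
\Ext^n_A(M,X)\otimes_R S\cong \Ext^n_B(M\otimes_R S,X\otimes_R S)\quad\text{and}\quad
\Ext^n_B(Y\otimes_R S,W)\cong \Ext^n_A(Y,W|_A)
\]
are correct (the first uses that each $M\in\CS$ has a resolution by finitely generated projectives and that $S$ is $R$-flat; the second uses that $B$ is flat over $A$, so that projective $A$-resolutions of $Y$ are sent to projective $B$-resolutions of $Y\otimes_A B$). Using them as you do, the ascent of $\CL_\CS$, the ascent of $\CK_\CS$ (via $W|_A\in\CL_\CS$ for $W\in\CL_{\CS\otimes_R S}$), and the descent of $\CL_\CS$ are all correctly argued, and this is exactly the route the paper implicitly follows by deferring to \cite[Proposition 2.3]{HST20}.

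The gap is precisely where you anticipate: the descent of $\CK_\CS$. The chain you set up is sound up to the last step — you correctly obtain $\Ext^1_A(X,L\otimes_RS)\cong\Ext^1_B(X\otimes_RS,L\otimes_RS)=0$ and, from the short exact sequence $0\to L\to L\otimes_RS\to Q\to 0$ with $Q\in\CL_\CS$, a \emph{surjection} $\Hom_A(X,Q)\twoheadrightarrow\Ext^1_A(X,L)$. But a surjection onto $\Ext^1_A(X,L)$ yields nothing: to conclude $\Ext^1_A(X,L)=0$ you would need the map $\Hom_A(X,L\otimes_RS)\to\Hom_A(X,Q)$ to be surjective, equivalently $\Ext^1_A(X,L)\to\Ext^1_A(X,L\otimes_RS)$ to be injective, and there is no reason for that here ($L\hookrightarrow L\otimes_RS$ is a pure, not split, monomorphism, and $\Ext^1_A(X,-)$ does not turn pure monos into monos). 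The ``iteration along the Amitsur cosimplicial object'' you gesture at also does not obviously close the gap: splicing
\[
0\to L\to L\otimes_RS\to Q_1\otimes_RS\to Q_2\otimes_RS\to\cdots
\]
does give a right resolution of $L$ by $\Hom_A(X,-)$-acyclics (each cosyzygy sits in $\CL_{\CS\otimes_RS}$ as a $B$-module, and the cotorsion pair is hereditary), so $\Ext^j_A(X,L)\cong H^j(\Hom_A(X,E^\bullet))$; but you now face exactly the same problem one degree up — there is no evident reason for the complex $\Hom_A(X,E^\bullet)$ to be exact in positive degrees. Note that this kind of descent is genuinely nontrivial: already for $\CS=\varnothing$ (so $\CK_\CS=$ projectives) the statement is the Raynaud–Gruson theorem on faithfully flat descent of projectivity, whose proof does not go via descending a splitting. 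So the fourth assertion requires an argument of a different nature than the one you have sketched, and as written the proposal is incomplete on this point.
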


\begin{prop}\label{tilting-alg}
Suppose that we are in the hypothesis of Setting \ref{setting-1} and that $\lambda:R\to S$ is flat.
If $T$ is an $n$-tilting module and then $T\otimes_R S$ is $n$-tilting. 

Moreover, if $\lambda$ is faithfully flat then 
$$\CK_T=\{X\in \Mod A\mid X\otimes_R S\in \CK_{T\otimes_R S}\},$$ and $$\CL_T=\{X\in \Mod A\mid X\otimes_R S\in \CL_{T\otimes_R S}\}.$$ 
\end{prop}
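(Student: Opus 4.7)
The plan splits into two parts, corresponding to the two sentences of the statement. For the ascent, I would first combine Corollary \ref{Corollary 3.6-PoS} with Theorem \ref{char-silt-bounded} to identify $T$ with a bounded silting complex in $\Der A$. Because $S$ is flat over $R$, $-\lotimes_R S$ agrees with $-\otimes_R S$ on modules, and Theorem \ref{ascend}(I) then yields that $T\otimes_R S$ is a bounded silting complex in $\Der B$, concentrated in degree zero (hence a module). Applying the exact functor $-\otimes_R S$ to a projective resolution of $T$ of length at most $n$ produces a resolution of $T\otimes_R S$ of the same length whose terms remain projective in $\Mod B$ (since $P\otimes_R S = P\otimes_A B$ for $P$ projective over $A$), so $T\otimes_R S$ has projective dimension at most $n$. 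Applying Corollary \ref{Corollary 3.6-PoS} in the reverse direction to $T\otimes_R S$ then gives that it is $n$-tilting.

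For the moreover part, the characterization of $\CL_T$ is essentially immediate from Proposition \ref{YTtoYTS}(3): since every $n$-tilting module is silting of finite type (a standard result of Bazzoni--Herbera--\v S\v tov\'\i\v cek), we have $M \in \Y_T$ iff $M\otimes_R S \in \Y_{T\otimes_R S}$, and for $M \in \Mod A$ both conditions reduce to lying in the respective $\CL$-classes via $H^i\RHom_A(T,M) \cong \Ext_A^i(T,M)$. For $\CK_T$ the same finite-type property yields a set $\CS$ of strongly finitely presented $A$-modules with $(\CK_T,\CL_T) = (\CK_\CS,\CL_\CS)$, and Lemma \ref{HST20-2.3} then gives $\CK_T = \CK_\CS = \{X \in \Mod A \mid X\otimes_R S \in \CK_{\CS\otimes_R S}\}$.

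The technical core, and the main obstacle, is to identify $\CK_{\CS\otimes_R S}$ with $\CK_{T\otimes_R S}$ as cotorsion pairs in $\Mod B$. I would obtain this from the enriched adjunction $\RHom_B(-\lotimes_A B,-) \cong \RHom_A(-,\RHom_B(B,-))$ from Setting \ref{setting-1}: since $B$ is flat over $A$ (as $S$ is flat over $R$), for any $A$-module $N$ the derived tensor $N\lotimes_A B$ reduces to $N\otimes_R S$, and taking $H^i$ yields the base-change isomorphism $\Ext^i_B(N\otimes_R S,M) \cong \Ext^i_A(N,M)$ for every $B$-module $M$. Applied to $N=T$ and to each $N=S'\in\CS$, this translates the vanishing conditions defining $\CL_{T\otimes_R S}$ and $\CL_{\CS\otimes_R S}$ into the same condition $M\in\CL_T=\CL_\CS$ on $M$ regarded as an $A$-module, so the two cotorsion pairs in $\Mod B$ coincide, completing the identification and yielding the stated characterization of $\CK_T$ via Lemma \ref{HST20-2.3}.
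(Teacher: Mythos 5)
Your proof is correct and follows essentially the same strategy as the paper: Theorem \ref{ascend} together with Corollary \ref{Corollary 3.6-PoS} for the ascent of the $n$-tilting property, and Lemma \ref{HST20-2.3} (using that tilting cotorsion pairs are generated by strongly finitely presented modules) for the characterizations of $\CK_T$ and $\CL_T$. In fact you are slightly more thorough than the paper at one point: the paper leaves implicit the identification of $(\CK_{\CS\otimes_R S},\CL_{\CS\otimes_R S})$ with $(\CK_{T\otimes_R S},\CL_{T\otimes_R S})$ needed to read off the conclusion from Lemma \ref{HST20-2.3}, whereas your base-change isomorphism $\Ext^i_B(N\otimes_R S,M)\cong\Ext^i_A(N,M)$ (for $B$-modules $M$, via the adjunction in Setting \ref{setting-1} and flatness) supplies that step explicitly; your alternative derivation of the $\CL_T$ equality via Proposition \ref{YTtoYTS}(3) is a valid, if somewhat roundabout, substitute.
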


\begin{proof}
We work in $\Der A$ and replace $T$ by its (bounded) projective resolution. From Lemma \ref{r-comm} it follows that  
$T\lotimes_R S\in T^{\perp_{>0}}$. By Proposition \ref{ascend} we know that $T\lotimes_R S\in (T\lotimes_R S)^{\perp_{>0}}$, and that $T\lotimes_R S$ is a weak generator in $\Der R$. Since $S$ is flat, $T\lotimes_R S=T\otimes_R S$ is also a $B$-module, hence $T\lotimes_R S\in (T\lotimes_R S)^{\perp_{<0}}$. The conclusion follows from Corollary \ref{Corollary 3.6-PoS}.

The last part is a consequence of Lemma \ref{HST20-2.3} since all tilting cotorsion pairs are generated by families of strongly finitely presented modules. 
\end{proof}

\section{Cosilting complexes of cofinite type over commutative rings}\label{Sect-cos-com}

Let $\lambda:R\to S$ be a morphism of commutative rings. In this section we consider the transfers of (co)silting objects of (co)finite type provided by the functors associated to $\lambda$.   

\subsection{Thomason filtrations} Since $R$ is a commutative ring, any compactly generated t-structure $(\U,\V)$ in $\Der R$ is parametrized by so called Thomason filtration on  
$\Spec R$. More precisely we call {\em Thomason (open)} a subset of $\Spec R$
a union of the subsets of the form $V(I)=\{\bp\in\Spec R\mid I\subseteq\bp\}$, 
where $I$ is a finitely generated ideal of $R$ (such a subset is called a {\em basic Thomason set}). Note that the basic Thomason sets are closed in the Zariski topology, that is the Thomason topology is the Hochster dual of the Zariski one.  

A {\em Thomason filtration} on $\Spec R$ is a family $\mathbb{X}=(X_k)_{k\in\Z}$ such that for every integer $k$ we have $X_k\subseteq \Spec R$ is a Thomason set, and $X_{k+1}\subseteq X_k$. This filtration is called {\em non-degenerate} if $\bigcap_n X_n=\varnothing$ and $\bigcup_n X_n=\Spec R$. Moreover, we will say that $\mathbb{X}$ is \textit{bounded} if there exist two integers $m_0$ and $n_0$ such that $X_m=\Spec R$ for all $m\leq m_0$ and $X_n=\emptyset$ for all $n\geq n_0$. It was proved in \cite[Proposition 5.12]{HHZ21} that non-degenerate Thomason filtrations parametrize cosilting complexes of cofinite type. We record, for further use, this correspondence.

\begin{thm}\label{coresp-thomason}
 {\rm (I)} There are bijective correspondences between the following classes:
\begin{enumerate}[{\rm i)}]
\item equivalence classes of cosilting complexes of cofinite type in $\Der R$;
\item compactly generated TTF-triples that are cosuspended and non-degenerate;
\item non-degenerate Thomason filtrations on $\Spec R$.
\end{enumerate}

{\rm (II)}  These bijections restrict to bijections between equivalence classes of bounded cosilting complexes of cofinite type,  TTF-triples that are cointermediate and cosuspended, and bounded Thomason filtrations. 
\end{thm}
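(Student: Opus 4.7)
The statement (I) is a compilation of two separate bijections that have been set up elsewhere, so my plan is mainly to reduce to them; the only genuine work lies in matching the boundedness conditions in part (II). I would split the argument into three steps: (i)$\leftrightarrow$(ii) by definition, (ii)$\leftrightarrow$(iii) by citation, and a short explicit check for (II).

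For (i)$\leftrightarrow$(ii), Remark \ref{TTF-notations}(3) already gives a bijection between equivalence classes of pure-injective cosilting complexes in $\Der R$ and non-degenerate cosuspended homotopically smashing TTF-triples. By definition, a cosilting complex is of cofinite type exactly when its associated TTF-triple is compactly generated; and by the earlier remark (citing \cite{SS20}) every compactly generated $t$-structure in $\Der R$ is automatically homotopically smashing, so the pure-injectivity hypothesis is automatic on the cofinite-type side. Restricting both sides of Remark \ref{TTF-notations}(3) to the compactly generated TTF-triples yields (i)$\leftrightarrow$(ii).

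For (ii)$\leftrightarrow$(iii), I would invoke \cite[Proposition 5.12]{HHZ21} and briefly recall the two maps: a Thomason filtration $\mathbb{X}=(X_n)_{n\in\Z}$ produces the TTF-triple whose aisle is generated, as a cosuspended and coproduct-closed subcategory of $\Der R$, by compact objects supported on $X_n$ placed in cohomological degree $n$; conversely, starting from a compactly generated cosuspended TTF-triple with coaisle $\V$, one recovers $X_n = \{\bp \in \Spec R \mid \kappa(\bp)[-n] \notin \V\}$. Non-degeneracy of $\mathbb{X}$ matches non-degeneracy of the $t$-structure, and no new argument is needed beyond this citation.

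For part (II), by Remark \ref{TTF-notations}(4) a cosilting object $C$ is bounded iff $(\U_C, \V_C, \W_C)$ is cointermediate and cosuspended, so the task reduces to matching cointermediateness with boundedness of $\mathbb{X}$ under the correspondence of the previous step. In one direction, if $X_n = \varnothing$ for $n \geq n_0$ then no generator of the aisle appears in degree $\geq n_0$, forcing $\V_{\mathbb{X}} \supseteq \Der{R}^{\geq n_0}$; if $X_n = \Spec R$ for $n \leq m_0$, the complex $R[-n]$ is among the generators of $\U_{\mathbb{X}}$ for every $n \leq m_0$, forcing $\U_{\mathbb{X}} \supseteq \Der{R}^{\leq m_0}$. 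For the converse, I would read $X_n$ off $\V_{\mathbb{X}}$ through the residue-complex formula above and verify that cointermediate bounds force the $X_n$ to stabilize at both ends. I expect this last verification to be the only mildly non-formal step of the proof; everything else is either purely definitional or a direct invocation of \cite{HHZ21}.
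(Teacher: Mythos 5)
Your proposal is correct and follows essentially the same route as the paper: part (I) is reduced to citation of \cite[Proposition 5.12]{HHZ21} (the paper cites this directly for all three classes, whereas you reconstruct the step (i)$\leftrightarrow$(ii) via Remark~\ref{TTF-notations}(3) and the fact that compactly generated implies homotopically smashing — an equivalent decomposition), and part (II) is handled by reducing through Remark~\ref{TTF-notations}(4) (equivalently \cite[Theorem 3.13]{MV18}) plus a short verification matching boundedness of the filtration with the (co)intermediate condition, just as in the paper's proof. One small inaccuracy in your write-up: the filtration is read off the aisle, not the coaisle, namely $X_n = \{\bp \in \Spec R \mid \kappa(\bp)[-n] \in \U\}$; with a non-degenerate t-structure an object can lie in neither $\U$ nor $\V$, so $\kappa(\bp)[-n] \notin \V$ is in general a weaker condition.
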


\begin{proof} The first part is proved in \cite[Proposition 5.12]{HHZ21}. These bijections extend the bijections described in \cite[Theorem 3.11]{H20}. 

The correspondence between bounded cosilting complexes and cointermediate, cosuspended TTF-triples is provided by \cite[Theorem 3.13]{MV18}, see also  \cite[Corollary 2.14]{AH21}. For the correspondence between TTF-triples that are cointermediate and cosuspended and bounded Thomason filtrations, let $\mathbb{X}=(X_k)_{k\in \Z}$ be a bounded Thomason filtration. By \cite[Theorem 3.11]{H20}, the aisle of the corresponding $t$-structure is $$\U_{\mathbb{X}}=\{X\in \Der R\mid \mathrm{Supp}(H^k(X))\subseteq X_k \textrm{ for all }n\in\Z\}.$$ Since $X_k=\varnothing$ for all $k\geq n$, it follows that for every $X\in \U_{\mathbb{X}}$ we have $H^k(X)=0$ for all $k\geq n$. It follows that $$\Der{R}^{\geq n}=\{X\in\Der R\mid H^i(X)=0 \textrm{ for all } i<n\}\subseteq \U_{\mathbb{X}}^{\perp_0}=\V_{\mathbb{X}}, $$ where $\U_{\mathbb{X}}$ is the coaisle of the $t$-structure associated to $\mathbb{X}$. Conversely, it is easy to see that if $(\U, \V,\W)$ is a TTF-triple that is cointermediate and cosuspended TTF-triple then the associated Thomason filtration is bounded. 
\end{proof}

\subsection{A topological transfer.}

If $\lambda:R\to S$ is a morphism of commutative rings, we can use the topological properties of the associated map $\lambda^\star:\Spec S\to \Spec R$, $\lambda^\star(\bq)=\lambda^{-1}(\bq)$, together with Theorem \ref{coresp-thomason}. Therefore, we obtain a topological method to  transfer the cosilting complexes of cofinite type from $\Der R$ to cosilting complexes of cofinite type in $\Der S$.  We start by recalling some basic results.

\begin{lemma}\label{open-closed}
Let $f:\mathfrak{S}_1\to\mathfrak{S}_2$ be a surjective closed map between two topological spaces. If $f^{-1}(Y)\subseteq \mathfrak{S}_1$ is an open set, then $Y$ is open in $\mathfrak{S}_2$.
\end{lemma}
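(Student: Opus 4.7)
The plan is to show that the complement of $Y$ in $\mathfrak{S}_2$ is closed, which immediately yields that $Y$ is open. The argument is a short and standard fact in point-set topology, so I will keep it compact.

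First, I would set $Z := \mathfrak{S}_2 \setminus Y$ and compute its preimage: since preimages commute with complements, $f^{-1}(Z) = \mathfrak{S}_1 \setminus f^{-1}(Y)$. By hypothesis $f^{-1}(Y)$ is open, so $f^{-1}(Z)$ is closed in $\mathfrak{S}_1$. Next, since $f$ is a closed map, the image $f(f^{-1}(Z))$ is closed in $\mathfrak{S}_2$.

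The final step uses surjectivity of $f$: for any subset $Z \subseteq \mathfrak{S}_2$, we have $f(f^{-1}(Z)) = Z \cap f(\mathfrak{S}_1)$, which equals $Z$ when $f$ is surjective. Hence $Z$ itself is closed, and consequently $Y = \mathfrak{S}_2 \setminus Z$ is open, as desired.

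The proof does not present any real obstacle; the only point that requires (minimal) care is recognising that the identity $f(f^{-1}(Z)) = Z$ truly needs surjectivity of $f$, which is one of the hypotheses. Both the closedness of $f$ and its surjectivity are used exactly once, so the hypotheses are tight.
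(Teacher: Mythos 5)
Your proof is correct and is the standard argument: pass to the complement, use that preimages commute with complements, apply closedness of $f$ to the closed preimage, and use surjectivity to identify $f(f^{-1}(Z))$ with $Z$. The paper states this lemma without proof, so there is nothing to compare against; your proof supplies exactly the expected justification.
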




\begin{lemma}\label{top-prop} {\rm (I)} \cite[Proposition 5.9]{San17} The map $\lambda^\star$ is continuous with respect to Thomason's topology.

{\rm (II)} \cite[Lemma 3.15]{HST20} If $\lambda$ is faithfully flat then $\lambda^\star$ is a surjective closed map with respect to Thomason's topology.
\end{lemma}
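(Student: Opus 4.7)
The plan is to handle the two assertions separately. For part (I), I would verify continuity by checking preimages of basic Thomason opens. For a finitely generated ideal $I=(a_1,\ldots,a_n)\subseteq R$, a direct computation gives
\[(\lambda^\star)^{-1}(V(I))=\{\bq\in\Spec S\mid \lambda(I)\subseteq\bq\}=V(\lambda(I)S),\]
and the ideal on the right is finitely generated by $\lambda(a_1),\ldots,\lambda(a_n)$; this is essentially all there is to (I).

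The substantive assertion is (II). Surjectivity of $\lambda^\star$ is the standard consequence of faithful flatness: for $\bp\in\Spec R$, the ring $S\otimes_R\kappa(\bp)$ is nonzero, and any of its prime ideals contracts to $\bp$. To establish closedness I would exploit the interplay between the Zariski, Thomason, and patch (constructible) topologies on $\Spec R$. Recall that the patch topology is generated by Zariski open sets together with Zariski closed sets; it is compact Hausdorff and refines the Thomason topology.

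The key intermediate lemma I would invoke (or derive from spectral-space theory) is the characterization: a subset $Y\subseteq\Spec R$ is Thomason closed \iff $Y$ is patch closed and closed under generalization. The ``only if'' direction is immediate since each basic Thomason closed $D(I)$ with $I$ finitely generated is a Zariski quasi-compact open set and is generalization closed. For the converse, given a generalization-closed patch-closed $Y$ and a point $\bp\notin Y$, generalization-closedness of $Y$ forces $\bp\not\subseteq\bq$ for every $\bq\in Y$; we may therefore pick $a_\bq\in\bp\setminus\bq$ for each $\bq\in Y$, and the family $\{D(a_\bq)\}_{\bq\in Y}$ patch-covers $Y$. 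Patch-compactness extracts a finite subcover, producing a finitely generated ideal $I\subseteq\bp$ with $V(I)\cap Y=\emptyset$, which shows the complement of $Y$ is Thomason open.

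With this characterization in hand, closedness of $\lambda^\star$ follows quickly: for a Thomason closed $Z\subseteq\Spec S$, patch-continuity of $\lambda^\star$ together with patch-compactness of $\Spec S$ yields that $\lambda^\star(Z)$ is patch closed, while the going-down property for the flat map $\lambda$ lifts any generalization of a point of $\lambda^\star(Z)$ to a generalization inside $Z$, so $\lambda^\star(Z)$ is also generalization closed. The main obstacle is establishing the characterization lemma; once it is in place, the rest is a short chain of standard facts about faithfully flat maps and spectral spaces.
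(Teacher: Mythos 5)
Your proof is correct on all counts. Note that the paper does not actually prove this lemma: both parts are cited to external references, namely \cite[Proposition 5.9]{San17} for (I) and \cite[Lemma 3.15]{HST20} for (II), so there is no in-paper argument to compare against. Your part (I) follows from the computation $(\lambda^\star)^{-1}(V(I)) = V(\lambda(I)S)$ for $I$ finitely generated, which is correct and immediately gives continuity. For part (II), you prove the standard spectral-space characterization that a subset of $\Spec{R}$ is Thomason closed if and only if it is patch closed and closed under generalization: the forward direction is clear since each basic Thomason open $V(I)$ has complement a quasi-compact Zariski open, hence patch clopen, and is generalization closed; for the converse your prime-avoidance choice $a_\bq \in \bp \setminus \bq$ together with patch-compactness of the patch-closed set $Y$ produces the required finitely generated $I \subseteq \bp$ with $V(I) \cap Y = \varnothing$. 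With that in hand, patch-closedness of $\lambda^\star(Z)$ follows from patch-continuity of $\lambda^\star$ (preimages of quasi-compact Zariski opens are quasi-compact Zariski opens) plus patch-compactness of $\Spec{S}$ and Hausdorffness of the patch topology on $\Spec{R}$, and generalization-closedness follows from going-down for the flat map $\lambda$. Surjectivity from $S \lotimes_R \kappa(\bp) \neq 0$ is the standard argument. Every step checks out; this is a complete and correct self-contained proof of a fact the paper delegates to the literature.
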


We will say that a Thomason subset $X\subseteq\Spec S$ is {\em Thomason saturated} (with respect to $\lambda$) if 
$X=(\lambda^\star)^{-1}(\lambda^\star(X))$. This means that for every $\bq\in\Spec S$ we have $\bq\in X$ if and only if $\lambda^{-1}(\bq)\in\lambda^\star(X)$. 

\begin{thm}\label{topological-transfer} Suppose that $\lambda:R\to S$ is a morphism of commutative rings. If $\mathbb{X}=(X_n)_{n\in \Z}$ is a non-degenerate Thomason filtration in $\Spec R$ then $\mathbb{Y}=((\lambda^\star)^{-1}(X_n))_{n\in\Z}$ is a non-degenerate Thomason filtration in $\Spec S$.

Therefore, $\lambda$ induces a map 
$$\mathbf{\Phi}:\left\{\begin{array}{c}\hbox{compactly generated }\\ \hbox{ t-structures in }\Der{R}\end{array}\right\}\longrightarrow\left\{\begin{array}{c}\hbox{compactly generated }\\ \hbox{ t-structures  in }\Der{S}\end{array}\right\}, $$ that is defined in the following way:

\noindent If $(\U,\V)$ is a compactly generated t-structure in $\Der R$ and $\mathbb{X}=(X_n)_{n\in \Z}$ is the Thomason filtration associated to it then $\mathbf{\Phi}(\U,\V)$ is the $t$-structure in $\Der S$ that is defined by the Thomason filtrations  $\mathbb{Y}=((\lambda^\star)^{-1}(X_n))_{n\in\Z}$. All components of $\mathbb{Y}$ are Thomason saturated with respect to $\lambda$. 

Moreover, if $\lambda$ is faithfully flat then it induces a map $\Psi$ that associates to every compactly generated $t$-structure $(\U',\V')$ in $\Der S$ with the property that all components $Y_n$ of the associated Thomason filtration $\mathbb{Y}=(Y_n)_{n\in \Z}$ are Thomason saturated with respect to $\lambda$, the t-structure of $\Der R$ that corresponds to $\mathbb{X}=(\lambda^\star(Y_n))_{n\in\Z}$. In this case we have $\mathbf{\Psi}\mathbf{\Phi}(\U,\V)=(\U,\V)$ for every compactly generated $t$-structure $(\U,\V)\in \Der R$  
\end{thm}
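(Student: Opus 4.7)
The plan is to prove the statement essentially by routine point-set manipulation, combining the two properties of $\lambda^\star$ recorded in Lemma \ref{top-prop} with the parametrization of compactly generated t-structures by Thomason filtrations (Theorem \ref{coresp-thomason}). The statement splits naturally into three tasks: (i) show that taking preimages under $\lambda^\star$ sends non-degenerate Thomason filtrations on $\Spec R$ to non-degenerate Thomason filtrations on $\Spec S$ consisting of saturated sets, (ii) when $\lambda$ is faithfully flat, show that taking direct images under $\lambda^\star$ of a filtration consisting of saturated Thomason sets yields a non-degenerate Thomason filtration on $\Spec R$, and (iii) verify $\mathbf\Psi\mathbf\Phi=\id$.

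For (i): each $(\lambda^\star)^{-1}(X_n)$ is Thomason by the continuity part Lemma \ref{top-prop}(I). The nesting $X_{n+1}\subseteq X_n$ is preserved under $(\lambda^\star)^{-1}$, and since preimages commute with arbitrary unions and intersections, the non-degeneracy $\bigcap_n X_n=\varnothing$ and $\bigcup_n X_n=\Spec R$ transports verbatim (the second using nothing, the first using that $(\lambda^\star)^{-1}(\varnothing)=\varnothing$). Saturation is tautological: $(\lambda^\star)^{-1}(\lambda^\star((\lambda^\star)^{-1}(X_n)))=(\lambda^\star)^{-1}(X_n)$ since $\lambda^\star((\lambda^\star)^{-1}(X_n))\subseteq X_n$ and the other inclusion is automatic. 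The map $\mathbf\Phi$ is then defined by invoking Theorem \ref{coresp-thomason} on both sides.

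For (ii): assume $\lambda$ faithfully flat, so by Lemma \ref{top-prop}(II) the map $\lambda^\star$ is surjective and closed in the Thomason topology. Given a saturated Thomason filtration $(Y_n)$ on $\Spec S$, set $X_n=\lambda^\star(Y_n)$. Since $Y_n=(\lambda^\star)^{-1}(X_n)$ is Thomason, Lemma \ref{open-closed} applied to the surjective closed map $\lambda^\star$ shows that $X_n$ is Thomason in $\Spec R$. The nesting property is clear from $Y_{n+1}\subseteq Y_n$. For non-degeneracy, $\bigcup_n X_n=\lambda^\star(\bigcup_n Y_n)=\lambda^\star(\Spec S)=\Spec R$ by surjectivity. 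The delicate point is $\bigcap_n X_n=\varnothing$: if $\mathfrak p\in\bigcap_n X_n$, then by saturation $(\lambda^\star)^{-1}(\mathfrak p)\subseteq (\lambda^\star)^{-1}(X_n)=Y_n$ for every $n$, so $(\lambda^\star)^{-1}(\mathfrak p)\subseteq \bigcap_n Y_n=\varnothing$, contradicting surjectivity of $\lambda^\star$. This is the step where the saturation hypothesis is used essentially. Again Theorem \ref{coresp-thomason} converts this into the map $\mathbf\Psi$.

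For (iii): given $(\U,\V)$ compactly generated in $\Der R$ with associated Thomason filtration $\mathbb{X}=(X_n)$, the composition sends it to the filtration $\lambda^\star((\lambda^\star)^{-1}(X_n))=X_n\cap\lambda^\star(\Spec S)=X_n$, where the last equality uses surjectivity of $\lambda^\star$ from Lemma \ref{top-prop}(II). By the bijection in Theorem \ref{coresp-thomason} this recovers the t-structure $(\U,\V)$.

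The only step requiring genuine care is the non-degeneracy of $\mathbb{X}$ in (ii), where one must simultaneously invoke the saturation hypothesis on the $Y_n$ and surjectivity of $\lambda^\star$; the remaining verifications are formal set-theoretic consequences of Lemma \ref{top-prop} and Theorem \ref{coresp-thomason}.
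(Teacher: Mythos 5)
Your proof is correct and follows essentially the same route as the paper, which applies Lemma~\ref{top-prop} (continuity, and surjectivity plus closedness for faithfully flat $\lambda$) together with Lemma~\ref{open-closed} and the parametrization in Theorem~\ref{coresp-thomason}. The paper leaves most of the point-set verifications as "an easy exercise"; you have supplied them, correctly isolating the one non-formal step — that saturation of the $Y_n$ is precisely what makes $\bigcap_n X_n=\varnothing$ in the $\mathbf\Psi$ direction.
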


\begin{proof} Since $\lambda^\star$ is continuous with respect to Hochster's topology it follows that $\mathbb{Y}$ is a system of Thomason sets. 

Observe that $\bq\in (\lambda^\star)^{-1}(X_n)$ if and only if $\lambda^{-1}(\bq)\in X_n$. Using this, it is easy to see that $\mathbb{Y}$ is a Thomason filtration. 

For the last statement, we apply Lemma \ref{top-prop} and Lemma \ref{open-closed} to conclude that the terms of the sequence $\mathbb{X}$ are Thomason sets. As before, it is an easy exercise to prove that $\mathbb{X}$ is a Thomason filtration.   
\end{proof}

\begin{rem}\label{top-transfer-cosilting}
As we already mentioned in the introduction, $\mathbf{\Phi}$ induces a map $$\mathbf{\Phi}:\left\{\begin{array}{c}\hbox{equivalence classes of }\\ \hbox{ cosilting objects }\\ \hbox{of cofinite type in } \Der{R}\end{array}\right\}\longrightarrow\left\{\begin{array}{c}\hbox{equivalence classes of }\\ \hbox{ cosilting objects } \\ \hbox{ of cofinite type in } \Der{S}\end{array}\right\}. $$
If $\lambda$ is faithfully flat, this map is injective.
\end{rem}

\subsection{The ascend of (co)silting complexes of (co)finite type via derived functors} We already proved in Theorem \ref{new-ascend-(co)finite} that the (co)induction functor associated to $\lambda$ preserves the (co)finite type property. In the following we will see that the map induced by the coinduction functor on the equivalence classes of cosilting objects of cofinite type is in fact the map $\Phi$ described in Remark \ref{top-transfer-cosilting}. We will need the following 

\begin{lemma}\label{transfer-T}
Suppose that $\lambda:R\to S$ is a morphism of commutative rings.
\begin{enumerate}[{\rm (I)}]
\item If $C\in\Der R$ is an object such that  $\RHom_R(S,C)\in \Der S$ is cosilting of cofinite type and $\mathbb{Y}=(Y_n)_{n\in\Z}$ is the Thomason filtration associated to $\RHom_R(S,C)$ then all sets $Y_n$ are Thomason saturated with respect to $\lambda$.

\item If $T\in\Der R$ is an object such that  $T\lotimes_R S\in \Der S$ is silting of finite type and $\mathbb{Y}=(Y_n)_{n\in\Z}$ is the Thomason filtration associated to $(T\lotimes_R S)^+$ then all sets $Y_n$ are Thomason saturated with respect to $\lambda$.
\end{enumerate}
\end{lemma}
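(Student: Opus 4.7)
The plan is to characterize membership $\bq\in Y_n$ by a homological condition depending only on the $R$-prime $\bp:=\lambda^\star(\bq)=\lambda^{-1}(\bq)$, from which Thomason saturation of $Y_n$ will follow immediately.

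For (I), I would set $C':=\RHom_R(S,C)$ and first translate $\bq\in Y_n$ into a statement about the aisle of the associated $t$-structure. By the explicit description of Thomason-filtration aisles recalled in the proof of Theorem~\ref{coresp-thomason}, namely $\U_{\mathbb{Y}}=\{X\in\Der S \mid \mathrm{Supp}_S H^k(X)\subseteq Y_k~\text{for all}~k\}$, together with specialization-closure of the Thomason sets $Y_n$ and the computation $\mathrm{Supp}_S k_S(\bq)=\{\bq'\in\Spec S \mid \bq\subseteq\bq'\}$, we obtain that $\bq\in Y_n$ if and only if $k_S(\bq)[-n]\in \U_{C'}$. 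Next, using $\U_{C'}={^{\perp_{\leq 0}}C'}$ and the adjunction
\[
\Hom_{\Der S}(X,\RHom_R(S,C)[m])\cong \Hom_{\Der R}(X,C[m])
\]
applied to $X=k_S(\bq)[-n]$ viewed as an $R$-complex, this becomes $\Hom_{\Der R}(k_S(\bq),C[j])=0$ for all $j\leq n$.

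The key step is the decomposition of the $S$-side residue field as an $R$-module. The $R$-action on $k_S(\bq)$ factors through $R\twoheadrightarrow R/\bp\hookrightarrow k_R(\bp)\hookrightarrow k_S(\bq)$, so $k_S(\bq)$ is a $k_R(\bp)$-vector space, hence as an $R$-module is isomorphic to a nonzero direct sum $k_R(\bp)^{(I)}$. Since $\Hom$ converts direct sums in the first argument into products, the previous vanishing is equivalent to $\Hom_{\Der R}(k_R(\bp),C[j])=0$ for all $j\leq n$, a condition depending solely on $\bp=\lambda^\star(\bq)$. Therefore $Y_n=(\lambda^\star)^{-1}(\lambda^\star(Y_n))$, proving (I).

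For (II), I would reduce to (I) through the natural isomorphism
\[
(T\lotimes_R S)^+\cong \RHom_R(S,T^+)
\]
in $\Der S$, which is an instance of the tensor-hom adjunction $\RHom_\Z(M\lotimes_R N,\Q/\Z)\cong \RHom_R(N,\RHom_\Z(M,\Q/\Z))$ applied with $M=T$ and $N=S$. Under the hypothesis that $T\lotimes_R S$ is silting of finite type in $\Der S$, the complex $(T\lotimes_R S)^+\cong\RHom_R(S,T^+)$ is cosilting of cofinite type in $\Der S$, and applying (I) to $C:=T^+$ yields the claim. The main technical hurdle is recognising that $k_S(\bq)$ admits a compatible $k_R(\bp)$-vector space structure, which allows one to collapse the $S$-side residue field to the $R$-side residue field; once this is noted, the rest is a formal adjunction combined with the standard support description of Thomason $t$-structures.
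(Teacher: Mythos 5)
Your proof is correct and takes essentially the same route as the paper: both translate $\bq\in Y_n$ to the condition $\kappa(\bq)[-n]\in\U_{\RHom_R(S,C)}$, pass from $\Der S$ to $\Der R$ via the restriction--coinduction adjunction, and then use that the $R$-module $\kappa(\bq)$ is a nonzero direct sum of copies of $\kappa(\lambda^\star(\bq))$ to replace one residue field by the other, with part (II) reduced to part (I) by $(T\lotimes_R S)^+\cong\RHom_R(S,T^+)$. The only cosmetic difference is that you rederive the identification $\bq\in Y_n\iff\kappa(\bq)[-n]\in\U_{\RHom_R(S,C)}$ from the support description of the aisle, whereas the paper cites it directly from \cite[Lemma 3.7]{HHZ21}.
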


\begin{proof}
(I) 
Let $n\in \Z$. From \cite[Lemma 3.7]{HHZ21} and Theorem \ref{ascend} {(applied for $A=R$ and, consequently, $B=S$)} it follows that 
\begin{align*}Y_n&=\{\bq\in\Spec S\mid \kappa(\bq)[-n]\in\U_{\RHom_R(S,C)}\}\\ &=\{\bq\in \Spec S\mid \kappa(\bq)[-n]\lotimes_S S\in \U_C\}.\end{align*}

Note that for every $\bq\in \Spec S$ there exists a canonical morphism (of $R$-algebras) $\kappa(\lambda^\star(\bq))\to \kappa(\bq)$ that is induced by the morphism $R/\lambda^{-1}(\bq)\to S/\bq$. This morphism is unital, and $\kappa(\lambda^\star(\bq))$ and $\kappa(\bq)$ are fields. It follows that, as an $R$-module, $\kappa(\bq)$ is a direct sum of copies of $\kappa(\lambda^\star(\bq))$. Then $\Hom_{\Der R}(\kappa(\bq),V)=0$ if and only if $\Hom_{\Der R}(\kappa(\lambda^\star(\bq)),V)=0$. 

Since $\kappa(\bq)[-n]\in \U_{\RHom_R(S,C)}$ if and only if for all $i\leq 0$ we have 
$$0= \Hom_{\Der S}(\kappa(\bq)[-n],\RHom_R(S,C)[i])\cong \Hom_{\Der R}(\kappa(\bq)[-n],C[i]),$$
we conclude that $\kappa(\bq)[-n]\in \U_{\RHom_R(S,C)}$ if and only if for all $i\leq 0$ we have $\Hom_{\Der R}(\kappa(\lambda^\star(\bq))[-n],C[i])=0$. Then $Y_n$ is Thomason saturated with respect to $\lambda$.

(II) For this case we apply (I) and the isomorphism $(T\lotimes_RS)^+\cong \RHom_R(S,T^+)$.
\end{proof}

\begin{thm}\label{ascend-(co)finite}
Let $\lambda:R\to S$ be a morphism of commutative rings. 
\begin{enumerate}[{\rm (I)}]
\item If $C\in\Der R$ is cosilting of cofinite type then  $\RHom_R(S,C)\in\mathbf{\Phi}(C)$, where $\mathbf{\Phi}$ is the map described in Remark \ref{top-transfer-cosilting}.
\item  If $T\in \Der R$ is silting of finite type then $(T\lotimes_R S)^+\in \mathbf{\Phi}(T^+)$. 
\end{enumerate}
\end{thm}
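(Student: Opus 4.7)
The strategy is to identify, for each of the two objects produced by the derived extension functors, the underlying Thomason filtration on $\Spec S$ and show that it equals the one prescribed by $\mathbf{\Phi}$.

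For part (I), Theorem \ref{new-ascend-(co)finite} already tells us that $\RHom_R(S,C) \in \Der S$ is cosilting of cofinite type, so by Theorem \ref{coresp-thomason} its equivalence class is determined by a non-degenerate Thomason filtration $\mathbb{Y}' = (Y'_n)_{n \in \Z}$ on $\Spec S$. Likewise $C$ corresponds to a Thomason filtration $\mathbb{X} = (X_n)_{n \in \Z}$ on $\Spec R$, and by construction $\mathbf{\Phi}(C)$ corresponds to $\mathbb{Y} = ((\lambda^\star)^{-1}(X_n))_{n \in \Z}$. Thus it suffices to show $Y'_n = (\lambda^\star)^{-1}(X_n)$ for every $n \in \Z$. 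By the characterization used in the proof of Lemma \ref{transfer-T}, we have
\[
Y'_n = \{\bq \in \Spec S \mid \kappa(\bq)[-n] \in \U_{\RHom_R(S,C)}\},
\]
and the analysis there (namely, the adjunction isomorphism $\Hom_{\Der S}(\kappa(\bq)[-n], \RHom_R(S,C)[i]) \cong \Hom_{\Der R}(\kappa(\bq)[-n], C[i])$ together with the fact that $\kappa(\bq)$ is a free $\kappa(\lambda^\star(\bq))$-module) shows that $\kappa(\bq)[-n] \in \U_{\RHom_R(S,C)}$ if and only if $\kappa(\lambda^\star(\bq))[-n] \in \U_C$, i.e.\ if and only if $\lambda^\star(\bq) \in X_n$. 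This gives the desired equality.

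For part (II), we appeal to part (I) via the natural isomorphism
\[
(T \lotimes_R S)^+ \cong \RHom_R(S, T^+)
\]
obtained from the tensor-hom adjunction (this is the case $Y = S$, $X = T$ of the first isomorphism in Lemma~1.1). Since $T$ is silting of finite type, Theorem \ref{silt-cosilt-dual} gives that $T^+$ is cosilting of cofinite type in $\Der R$, so part (I) applies and yields $\RHom_R(S,T^+) \in \mathbf{\Phi}(T^+)$. Combined with the isomorphism above and the fact that equivalence classes are preserved by isomorphism, this gives $(T \lotimes_R S)^+ \in \mathbf{\Phi}(T^+)$.

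The only substantive point is the identification $Y'_n = (\lambda^\star)^{-1}(X_n)$, and most of the required computation is already isolated in Lemma \ref{transfer-T}; it is essentially a reformulation of its proof. Part (II) is then a formal consequence of part (I) via adjunction.
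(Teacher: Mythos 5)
Your proof is correct and follows essentially the same route as the paper: both identify the Thomason filtration of the transferred object via the computation inside the proof of Lemma~\ref{transfer-T} (which gives $Y'_n = (\lambda^\star)^{-1}(X_n)$ once $C$ is known to be cosilting of cofinite type), and both deduce part (II) from part (I) via the isomorphism $(T\lotimes_RS)^+\cong\RHom_R(S,T^+)$. The only difference is that you spell out a few more of the implicit steps (invoking Theorem~\ref{new-ascend-(co)finite} explicitly and tracking the equivalence classes), which is harmless and arguably clearer than the paper's terse version.
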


\begin{proof}
(I) From Lemma \ref{transfer-T} it follows that for all $n\in \Z$ we have $Y_n=(\lambda^\star)^{-1}(X_n)$, where $\mathbb{X}=(X_n)_{n\in\Z}$ is the Thomason filtration associated to $C$, and the proof is complete.  


(II) This is also a consequence of Lemma \ref{transfer-T}.
\end{proof}



If $\lambda$ is faithfully flat, we can apply Theorem \ref{ascend-(co)finite} together with the second part of Theorem \ref{topological-transfer}, we obtain the following cosilting version of Corollary \ref{silt-inj}.

\begin{cor}\label{inj-ascend} Suppose that $\lambda:R\to S$ is faithfully flat. If $C_1,C_2\in \Der R$ are cosilting complexes of cofinite type such that the cosilting complexes $\RHom_R(S,C_1)$ and $\RHom_R(S,C_2)$ are equivalent then $C_1$ and $C_2$ are equivalent.
\end{cor}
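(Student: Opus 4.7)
The plan is to reduce this statement to the injectivity of the topological transfer map $\mathbf{\Phi}$ under the faithfully flat hypothesis. The key tools, both already available in the excerpt, are Theorem \ref{ascend-(co)finite}, which identifies the derived coinduction functor with $\mathbf{\Phi}$ on equivalence classes, and Theorem \ref{topological-transfer}, which produces a one-sided inverse $\mathbf{\Psi}$ of $\mathbf{\Phi}$ in the faithfully flat case.

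First I would unwind the hypothesis. Since $C_1, C_2 \in \Der R$ are cosilting of cofinite type, each determines an equivalence class $[C_i]$ and, via the bijection of Theorem \ref{coresp-thomason}, a non-degenerate Thomason filtration $\mathbb{X}^{(i)} = (X_n^{(i)})_{n \in \Z}$ on $\Spec R$. By Theorem \ref{ascend-(co)finite}, $\RHom_R(S, C_i)$ lies in the image class $\mathbf{\Phi}([C_i])$ in $\Der S$, which corresponds to the Thomason filtration $((\lambda^\star)^{-1}(X_n^{(i)}))_{n \in \Z}$ on $\Spec S$.

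Next, I translate the assumption that $\RHom_R(S, C_1)$ and $\RHom_R(S, C_2)$ are equivalent: this means the two images $\mathbf{\Phi}([C_1])$ and $\mathbf{\Phi}([C_2])$ coincide as equivalence classes of cosilting complexes of cofinite type in $\Der S$, and correspondingly the Thomason filtrations $((\lambda^\star)^{-1}(X_n^{(1)}))_{n \in \Z}$ and $((\lambda^\star)^{-1}(X_n^{(2)}))_{n \in \Z}$ on $\Spec S$ agree.

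Finally I invoke the faithful flatness of $\lambda$. By Theorem \ref{topological-transfer} (and Lemma \ref{top-prop}(II)), $\lambda^\star$ is surjective, so $(\lambda^\star)^{-1}(X_n^{(1)}) = (\lambda^\star)^{-1}(X_n^{(2)})$ forces $X_n^{(1)} = X_n^{(2)}$ for every $n \in \Z$; equivalently, $\mathbf{\Psi}\mathbf{\Phi}([C_i]) = [C_i]$ yields $[C_1] = \mathbf{\Psi}\mathbf{\Phi}([C_1]) = \mathbf{\Psi}\mathbf{\Phi}([C_2]) = [C_2]$. Thus $C_1$ and $C_2$ are equivalent. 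There is no real obstacle here beyond the bookkeeping of identifying the algebraic transfer with the topological one; all substantive work has been carried out in the preceding two theorems.
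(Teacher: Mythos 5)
Your proof is correct and follows essentially the same route the paper takes: the paper proves the corollary in one line by combining Theorem~\ref{ascend-(co)finite} (which identifies the algebraic transfer $\RHom_R(S,-)$ with the topological transfer $\mathbf{\Phi}$ on equivalence classes) with the second part of Theorem~\ref{topological-transfer} (which gives the one-sided inverse $\mathbf{\Psi}$ when $\lambda$ is faithfully flat). Your unwinding of $\mathbf{\Psi}\mathbf{\Phi}=\mathrm{id}$ in terms of surjectivity of $\lambda^\star$ acting on the Thomason filtrations is exactly the mechanism underlying the paper's argument, just written out explicitly.
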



 

\subsection{Some descent properties} If $\lambda:R\to S$ is faithfully flat, we can use the above results to describe all cosilting complexes of cofinite type from $\Der S$ that can be obtained as images of the functor $\RHom_R(S,-)$.

\begin{prop}
Let $\lambda:R\to S$ be a faithfully flat morphism of commutative rings. If $\overline{C}\in \Der S$ is a cosilting complex of cofinite type and $\mathbb{Y}=(Y_n)_{n\in \Z}$ is the associated Thomason filtration on $\Spec S$, the following are equivalent:
\begin{enumerate}[{\rm (i)}]
\item there exists a cosilting complex of cofinite type $C$ in $\Der R$ such that $\overline{C}$ is equivalent to $\RHom_R(S,C)$;
\item 
all $Y_n$ are Thomason saturated with respect to $\lambda$.  
\end{enumerate}  
\end{prop}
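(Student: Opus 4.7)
\smallskip

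My plan is to prove the equivalence by going (i)$\Rightarrow$(ii) via the results already established, and then building the cosilting complex $C$ in (ii)$\Rightarrow$(i) by pushing the filtration $\mathbb{Y}$ down to $\Spec R$ along $\lambda^\star$. The direction (i)$\Rightarrow$(ii) is immediate: if $\overline{C}$ is equivalent to $\RHom_R(S,C)$ then the two cosilting objects share the same Thomason filtration, and Lemma~\ref{transfer-T}(I) tells us exactly that all components of the filtration attached to $\RHom_R(S,C)$ are Thomason saturated with respect to $\lambda$.

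For (ii)$\Rightarrow$(i), I would set $X_n := \lambda^\star(Y_n)$ for each $n\in\Z$ and first show that $\mathbb{X}=(X_n)_{n\in\Z}$ is a non-degenerate Thomason filtration on $\Spec R$. Since $\lambda$ is faithfully flat, Lemma~\ref{top-prop}(II) says $\lambda^\star$ is a surjective closed map with respect to Hochster's topology, so each $X_n$ is Thomason closed in the Hochster sense, hence a Thomason subset of $\Spec R$. The inclusions $X_{n+1}\subseteq X_n$ are obvious, and surjectivity of $\lambda^\star$ gives $\bigcup_n X_n = \lambda^\star(\Spec S) = \Spec R$. The only subtle point is $\bigcap_n X_n = \varnothing$: if $\bp$ lay in this intersection, then saturation gives $(\lambda^\star)^{-1}(\bp)\subseteq (\lambda^\star)^{-1}(X_n)=Y_n$ for every $n$, so the whole (nonempty, by surjectivity) fibre over $\bp$ would sit in $\bigcap_n Y_n=\varnothing$, a contradiction. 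This is the step where the saturation hypothesis really does its job.

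With $\mathbb{X}$ a non-degenerate Thomason filtration in hand, Theorem~\ref{coresp-thomason} produces (up to equivalence) a unique cosilting complex of cofinite type $C\in\Der R$ corresponding to $\mathbb{X}$. By Theorem~\ref{ascend-(co)finite}(I), $\RHom_R(S,C)$ lies in the class $\mathbf{\Phi}(C)$, i.e.\ its associated Thomason filtration is $((\lambda^\star)^{-1}(X_n))_{n\in\Z}$. But saturation of $Y_n$ yields $(\lambda^\star)^{-1}(X_n)=(\lambda^\star)^{-1}(\lambda^\star(Y_n))=Y_n$, so the Thomason filtrations of $\RHom_R(S,C)$ and $\overline{C}$ coincide. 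By Theorem~\ref{coresp-thomason} again, these two cosilting objects of cofinite type are equivalent, completing the proof.

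The main obstacle I anticipate is precisely the verification of $\bigcap_n X_n=\varnothing$ above, since this is the unique place where neither continuity/closedness of $\lambda^\star$ nor the bare non-degeneracy of $\mathbb{Y}$ suffices on its own; it is exactly the interplay of saturation with the surjectivity of $\lambda^\star$ (faithful flatness) that forces the conclusion. Everything else is a routine assembly of Theorem~\ref{topological-transfer}, Theorem~\ref{ascend-(co)finite}, and the bijective correspondence from Theorem~\ref{coresp-thomason}.
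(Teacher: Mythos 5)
Your overall strategy matches the paper's: for (i)$\Rightarrow$(ii) invoke Lemma~\ref{transfer-T}, and for (ii)$\Rightarrow$(i) push $\mathbb{Y}$ forward along $\lambda^\star$ to a Thomason filtration $\mathbb{X}$ on $\Spec R$, take the corresponding cosilting complex $C$, and use Theorem~\ref{ascend-(co)finite} plus saturation to recover $\overline C$ up to equivalence. However, there is a genuine error in the step where you claim that each $X_n=\lambda^\star(Y_n)$ is Thomason. You argue that since $\lambda^\star$ is a closed map for the Thomason (Hochster-dual) topology, $X_n$ is ``Thomason closed, hence a Thomason subset.'' This conflates two different things: Thomason subsets of $\Spec R$ are by definition the \emph{open} sets of the Thomason topology, not the closed ones, and a closed map takes closed sets to closed sets — it has no reason to take the open set $Y_n$ to an open set. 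In general, $\lambda^\star$ of a Thomason set need not be Thomason.

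The correct justification, which is what the paper uses (via the proof of Theorem~\ref{topological-transfer}), is Lemma~\ref{open-closed}: because $Y_n$ is Thomason saturated with respect to $\lambda$, one has $(\lambda^\star)^{-1}(X_n)=(\lambda^\star)^{-1}\bigl(\lambda^\star(Y_n)\bigr)=Y_n$, which is Thomason open; since $\lambda^\star$ is surjective and closed, Lemma~\ref{open-closed} then forces $X_n$ to be open, i.e.\ Thomason. So the saturation hypothesis is not only needed to verify $\bigcap_n X_n=\varnothing$ (which you handle correctly), but already to establish that $\mathbb{X}$ consists of Thomason sets at all — you misidentify the only place saturation enters. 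Once this step is repaired by invoking Lemma~\ref{open-closed}, the remainder of your argument (pinning down the filtration of $\RHom_R(S,C)$ via Theorem~\ref{ascend-(co)finite}(I), using saturation to identify it with $\mathbb{Y}$, and concluding equivalence by the bijection in Theorem~\ref{coresp-thomason}) is sound and is essentially what the paper does.
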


\begin{proof}
i)$\Rightarrow$ii) follows from Lemma \ref{transfer-T}.

ii)$\Rightarrow$i) From the proof of Theorem \ref{topological-transfer} it follows that $\mathbb{X}=(\lambda^\star(Y_n))_{n\in \Z}$ is a Thomason filtration. Let $C\in\Der R$ be a cosilting complex that corresponds to $\mathbb{X}$. It follows from Theorem \ref{ascend-(co)finite} that $\RHom_R(S,C)$ is equivalent to $\overline{C}$.
\end{proof}

Using the equivalence described in Theorem \ref{silt-cosilt-dual}(2), we obtain:

\begin{cor}\label{asc-bound-silt}
Let $\lambda:R\to S$ be a faithfully flat morphism of commutative rings. If $\overline{T}\in \Der S$ is a bounded silting complex, the following are equivalent:
\begin{enumerate}[{\rm (i)}]
\item there exists a (bounded) silting complex $T$ in $\Der R$ such that $\overline{T}$ is equivalent to $T\lotimes_R S$;
\item if $\mathbb{Y}=(Y_n)_{n\in \Z}$ is the Thomason filtration associated to $\overline{T}^+$ then  
all sets $Y_n$ are Thomason saturated with respect to $\lambda$.  
\end{enumerate}  
\end{cor}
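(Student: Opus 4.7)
The plan is to reduce the statement to the cosilting case already handled by the preceding Proposition, via the duality of Theorem \ref{silt-cosilt-dual}(2). First I would observe that since $S$ is commutative, equivalence classes of bounded silting complexes in $\Der S$ are in bijection, through $\overline{T}\mapsto \overline{T}^+$, with equivalence classes of bounded cosilting complexes of cofinite type in $\Der S$. Moreover, for any bounded silting $T$ in $\Der R$, the base-change identity $(T\lotimes_R S)^+\cong \RHom_R(S,T^+)$ (already used in the proof of Lemma \ref{transfer-T}) shows that ascending $T$ along $\lambda$ corresponds, on the dual side, to applying $\RHom_R(S,-)$ to $T^+$.

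Next I would apply the preceding Proposition to the bounded cosilting complex of cofinite type $\overline{T}^+$ in $\Der S$: it gives a cosilting complex of cofinite type $C$ in $\Der R$ with $\overline{T}^+$ equivalent to $\RHom_R(S,C)$ if and only if every $Y_n$ in the Thomason filtration of $\overline{T}^+$ is Thomason saturated with respect to $\lambda$. Given such a $C$, provided we can arrange $C$ to be bounded, Theorem \ref{silt-cosilt-dual}(2) gives $C$ equivalent to $T^+$ for a bounded silting complex $T\in\Der R$, and then the chain $\overline{T}^+\simeq \RHom_R(S,T^+)\cong (T\lotimes_R S)^+$ together with the injectivity of $(-)^+$ on equivalence classes yields $\overline{T}\simeq T\lotimes_R S$. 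Conversely, starting from (i), the same chain of equivalences produces an equivalence $\overline{T}^+\simeq \RHom_R(S,T^+)$, so the preceding Proposition gives (ii).

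The point requiring the most care is the boundedness of $C$. Because $\overline{T}$ is bounded silting, Theorem \ref{coresp-thomason}(II) tells us that $\mathbb{Y}$ is a bounded Thomason filtration. Inspecting the construction in the proof of the preceding Proposition, $C$ corresponds under Theorem \ref{coresp-thomason} to the filtration $\mathbb{X}=(\lambda^\star(Y_n))_{n\in\Z}$. Since $\lambda^\star$ is surjective by Lemma \ref{top-prop}(II), we have $\lambda^\star(\Spec S)=\Spec R$, while $\lambda^\star(\varnothing)=\varnothing$, so the filtration $\mathbb{X}$ is bounded as well. Again by Theorem \ref{coresp-thomason}(II), $\mathbb{X}$ corresponds to a bounded cosilting complex of cofinite type, which by Theorem \ref{silt-cosilt-dual}(2) is equivalent to $T^+$ for a suitable bounded silting complex $T\in\Der R$. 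This closes the remaining gap in the implication (ii)$\Rightarrow$(i) and completes the reduction.
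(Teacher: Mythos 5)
Your proof is correct and follows essentially the same route as the paper's: dualize via $(-)^+$ to the cosilting side, apply the preceding proposition, and transfer back using Theorem \ref{silt-cosilt-dual}(2). The one place you add value is in spelling out why the resulting $C$ is bounded (surjectivity of $\lambda^\star$ sends $\mathbb{Y}$ to a bounded filtration $\mathbb{X}$, whence $C$ is bounded by Theorem \ref{coresp-thomason}(II)), a point the paper's proof states without detail.
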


\begin{proof}
The implication (i)$\Rightarrow$(ii) is proved in Lemma \ref{transfer-T}. Conversely, since $\mathbb{Y}=(Y_n)_{n\in \Z}$ is a (bounded) Thomason filtration that is saturated with respect to $\lambda$, there exists a bounded cosilting complex $C\in \Der{R}$ such that $\RHom_R(S,C)$ is equivalent to $\overline{T}^+$. Since $C$ is bounded, it follows that $C$ is equivalent to a complex of the form $T^+$ with $T\in \Der R$ a bounded silting complex. Applying  Theorem \ref{silt-cosilt-dual}(2), it follows that the bounded silting complexes $\overline{T}$ and $T\lotimes_R S$ are equivalent.   
\end{proof}

\begin{rem}
The above corollary is also valid for all silting complexes of finite type $\overline{T}\in \Der S$ if we assume that all cosilting complexes of cofinite type from $\Der{R}$ are of the form $T^+$, with $T\in \Der R$ a silting complex of finite type (e.g., if $R$ is noetherian, cf. \cite[Theorem 3.8]{AH21}). 
\end{rem}





The above considerations let us prove a descend property for bounded cosilting complexes:

\begin{thm}\label{a-cosilting}
Let $T\in \Der{R}$ be a bounded complex of projectives. 
If $\RHom(S,T^+)$ is cosilting in $\Der{S}$ then $T^+$ is cosilting in $\Der{R}$. 
\end{thm}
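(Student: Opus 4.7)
Since $T$ is a bounded complex of projective $R$-modules, the character dual $T^+$ is represented by a bounded complex of injective $R$-modules (character duals of projectives are injective). Hence Theorem~\ref{char-silt-bounded} applies and reduces the problem to verifying the two conditions (Cb1) and (Cb2) for $T^+$. I work under the faithful flatness of $\lambda$, which is the standing assumption of this subsection on descent.

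For condition (Cb2), take any $X\in{}^{\perp_\Z}(T^+)$. The duality $\RHom_R(X,T^+)\cong(T\lotimes_R X)^+$ recalled at the beginning of Section~\ref{sect-silt-cosilt}, together with the fact that $(-)^+$ detects zero in $\Der R$, yields $T\lotimes_R X=0$. Tensoring with $S$ over $R$ and rearranging in the commutative setting gives $(T\lotimes_R S)\lotimes_S(X\lotimes_R S)=0$ in $\Der S$. Since $\RHom_R(S,T^+)\cong(T\lotimes_R S)^+$ is cosilting in $\Der S$ by hypothesis, Corollary~\ref{tensor-gen} applied in $\Der S$ forces $X\lotimes_R S=0$, and faithful flatness of $\lambda$ then gives $X=0$.

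Condition (Cb1) is the substantive step: I must show $\Hom_{\Der R}((T^+)^I,T^+[n])=0$ for every set $I$ and every $n>0$. Given a morphism $\phi\colon(T^+)^I\to T^+[n]$, the functor $\RHom_R(S,-)$ commutes with products and shifts, so $\RHom_R(S,\phi)\colon\RHom_R(S,T^+)^I\to\RHom_R(S,T^+)[n]$ is a morphism whose source lies in $\Prod(\RHom_R(S,T^+))$, and it therefore vanishes by (Cb1) applied to the cosilting object $\RHom_R(S,T^+)$ in $\Der S$. The main obstacle is to deduce $\phi=0$ from $\RHom_R(S,\phi)=0$, because $\RHom_R(S,-)$ does not reflect zero morphisms in general.

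To overcome this, the plan is to combine two ingredients. First, $T^+$ is pure-injective as a character dual of an object of $\Der R$, so every phantom morphism with codomain $T^+[n]$ vanishes. Second, Proposition~\ref{phantoms} shows that under faithful flatness the functor $-\lotimes_R S$ reflects phantom morphisms. It therefore suffices to show that $\phi\lotimes_R S$ is phantom in $\Der S$, and by Lemma~\ref{dual-phantom} this reduces to verifying $(\phi\lotimes_R S)^+=0$. The natural identity $(Y\lotimes_R S)^+\cong\RHom_R(S,Y^+)$ identifies $(\phi\lotimes_R S)^+$ with $\RHom_R(S,\phi^+)$, a morphism $\RHom_R(S,T^{++})[-n]\to\RHom_R(S,(T^{(I)})^{++})$, and the required vanishing is to be extracted from the already-known $\RHom_R(S,\phi)=0$ by a careful naturality chase through the character-duality identifications, exploiting the pure-injectivity of the double duals $T^{++}$ and $(T^{(I)})^{++}$. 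Completing this last identification---bridging the two a priori distinct applications of $\RHom_R(S,-)$ to $\phi$ and to $\phi^+$---is the principal technical hurdle I expect in executing the proof.
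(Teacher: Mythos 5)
Your verification of (Cb2) is correct and clean: it goes through $T\lotimes_R X=0$, then $(T\lotimes_R S)\lotimes_S(X\lotimes_R S)=0$, then Corollary~\ref{tensor-gen} in $\Der S$, then faithful flatness. However, there is a genuine gap in your argument for (Cb1), and you have correctly identified where it is. The problem is that the bridge you are hoping for does not exist. You know $\RHom_R(S,\phi)=0$, and you wish to conclude that $\phi\lotimes_R S$ is phantom in $\Der S$, i.e.\ by Lemma~\ref{dual-phantom} that $(\phi\lotimes_R S)^+=0$, which via the identification $(Y\lotimes_R S)^+\cong\RHom_R(S,Y^+)$ is the statement $\RHom_R(S,\phi^+)=0$. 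But $\RHom_R(S,\phi)$ and $\RHom_R(S,\phi^+)$ are $\RHom_R(S,-)$ applied to two unrelated morphisms: the first lives between $\RHom_R(S,(T^+)^I)$ and $\RHom_R(S,T^+[n])$, the second between $\RHom_R(S,T^{++}[-n])$ and $\RHom_R(S,(T^{(I)})^{++})$. There is no comparison morphism between them, and the obvious naturality square (using the unit $\eta\colon\mathrm{id}\to(-)^{++}$ and applying $\RHom_R(S,-)$) only shows $\RHom_R(S,\phi^{++})\circ\RHom_R(S,\eta)=0$, which is not enough since $\RHom_R(S,\eta)$ is a split mono, not an epi. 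Since $\RHom_R(S,-)$ does not reflect zero morphisms (as the paper's remark after Lemma~\ref{lemma-zim} emphasizes), the information $\RHom_R(S,\phi)=0$ by itself really does not pin down $\phi^+$.

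The paper's proof avoids this obstacle entirely by taking a different route. From the hypothesis, $(T\lotimes_R S)^+\cong\RHom_R(S,T^+)$ is a bounded cosilting complex of cofinite type in $\Der S$, so $T\lotimes_R S$ is bounded silting. Lemma~\ref{transfer-T} then shows that the Thomason filtration of $(T\lotimes_R S)^+$ is saturated with respect to $\lambda$, and Corollary~\ref{asc-bound-silt} produces a \emph{bona fide} bounded silting complex $\widetilde T$ in $\Der R$ with $\widetilde T\lotimes_R S$ equivalent to $T\lotimes_R S$. Finally, two applications of Lemma~\ref{UT} (which uses faithful flatness and the formula $H^n((T\lotimes_R S)\lotimes_S(X\lotimes_R S))\cong H^n(T\lotimes_R X)\otimes_R S$, hence is available without knowing $T$ is silting) show that ${}^{\perp_{\leq 0}}(T^+)={}^{\perp_{\leq 0}}(\widetilde T^+)$ and ${}^{\perp_{>0}}(T^+)={}^{\perp_{>0}}(\widetilde T^+)$, so the pair $({}^{\perp_{\leq 0}}(T^+),{}^{\perp_{>0}}(T^+))$ is literally the cosilting t-structure of $\widetilde T^+$ and hence a t-structure. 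In other words, the existence of the lift $\widetilde T$, obtained from the Thomason-filtration combinatorics, is the key input, and it substitutes for both (Cb1) and (Cb2); the paper never attempts a direct verification of (Cb1) for $T^+$, and that is exactly the step that your proposal was unable to close.
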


\begin{proof}
Since the Thomason filtration of $\RHom_R(S,T^+)\cong (T\lotimes_R S)^+$ is bounded and Thomason saturated with respect to $\lambda$, we can apply Proposition \ref{asc-bound-silt} to observe that there exists a bounded complex of projectives $\widetilde{T}$ in $\Der{R}$ such that $\widetilde{T}$ is silting and $(\widetilde{T}\lotimes_R S)^+$ is equivalent to $(T\lotimes_R S)^+$. 
It follows that $(\widetilde{T}\lotimes_R S)^+$ and $(T\lotimes_R S)^+$, induce the same t-structure whose aisle and coaisle are  
\[\V_{(T\lotimes_RS)^+}={^{\perp_{\leq0}}[(T\lotimes_RS)^+]}={^{\perp_{\leq0}}[(\widetilde{T}\lotimes_RS)^+]},\] respectively \[\W_{(T\lotimes_RS)^+}={^{\perp_{>0}}[(T\lotimes_RS)^+]}={^{\perp_{>0}}[(\widetilde{T}\lotimes_RS)^+]}.\]
Using Corollary \ref{UT} twice, we get 
\[{^{\perp_{\leq0}}(T^+)}=\{X\in\Der R\mid X\lotimes_RS\in\V_{(T\lotimes_RS)^+}\}={^{\perp_{\leq0}}(\widetilde{T}^+)}\]
and 
\[{^{\perp_{>0}}(T^+)}=\{X\in\Der R\mid X\lotimes_RS\in\W_{(T\lotimes_RS)^+}\}={^{\perp_{>0}}(\widetilde{T}^+)}.\]
Therefore the pair $({^{\perp_{\leq0}}(T^+)}, {^{\perp_{>0}}(T^+)})$ coincides with the cosilting t-structure induced by $(\widetilde T)^+$. Hence $T^+$ is cosilting too and it is obviously equivalent to $(\widetilde T)^+$.
\end{proof}

\section{The descend property for bounded silting complexes}

\subsection{Bounded silting complexes as modules over Dynkin quivers}

Let $T$ be a bounded silting complex in $\Der R$. We assume that it is concentrated in the degrees $-n+1,\dots,-1,0$, and we will use the interpretation presented in 
\cite{MV18}. In order to do this, we consider the Dynkin quiver 
$$\mathbb{A}_n: \overset{-n+1} \bullet \longrightarrow \overset{-n+2} \bullet \longrightarrow \dots \longrightarrow \overset{-1} \bullet \longrightarrow \overset{0} \bullet  ,$$
and we denote by $A(R)$ the $R$-algebra $R\mathbb{A}_n/I$, where I is the ideal generated by all paths of length $2$. Note that $A(R)\cong T_n(R)/J(T_n(R))^2$, where $T_n(R)$ denotes the ring of lower triangular matrices over $R$ and $J(T_n(R))$ is the corresponding Jacobson radical. 

Let $\mathrm{Rep}(A(R))$ be the category of representations bounded by $I$ of $\mathbb{A}_n$ in $\Mod R$. Then $\mathrm{Rep}(A(R))$ is equivalent to the category  $\Mod A(R)$, and it can be identified to the category of complexes over $R$ that are concentrated in $-n+1,\dots,-1,0$, so we have a fully faithful functor from $\mathrm{Rep}(A(R))$ to the category of complexes. If $X$ is a complex concentrated in $-n+1,\dots,-1,0$, we will denote by $\widehat{X}$ the corresponding object from $\mathrm{Rep}(A(R))$. The above mentioned functor induces a functor $\mathbf{\Psi}_R:\mathrm{Rep}(A(R))\to \Der R$. We have the following useful result

\begin{lemma}\cite[Lemma 2.4]{MV18}\label{ext-MV} If $\mathbf{\Psi}(\widehat{T})$ is a complex of projectives then for every $\widehat{X}\in \mathrm{Rep}(A(R))$ and for every $i>0$ we have 
$$\Hom_{\Der R}(\mathbf{\Psi}_R(\widehat{T}), \mathbf{\Psi}_R(\widehat{X})[i])=\Ext^i_{\mathrm{Rep}(A(R))}(\widehat{T},\widehat{X}).$$
\end{lemma}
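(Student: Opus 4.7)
My plan is to prove the lemma by induction on $i \geq 1$: the base case $i=1$ via an explicit Yoneda identification, and the inductive step via dimension-shifting along a canonical short exact sequence of representations.

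For the base case $i=1$, since $\mathbf{\Psi}_R(\widehat{T})$ is a bounded complex of projective $R$-modules it is K-projective, so
\[
\Hom_{\Der R}(\mathbf{\Psi}_R(\widehat{T}), \mathbf{\Psi}_R(\widehat{X})[1]) = H^1 \Hom_R^\bullet(\widehat{T}, \widehat{X}),
\]
i.e.\ the group of chain-level cocycles $g = (g^k: \widehat{T}^k \to \widehat{X}^{k+1})_k$ satisfying $d_X g + g d_T = 0$, modulo null-homotopies. To such a cocycle I would associate the Yoneda extension $0 \to \widehat{X} \to \widehat{Y}_g \to \widehat{T} \to 0$ in $\mathrm{Rep}(A(R))$ defined by $\widehat{Y}_g^k = \widehat{X}^k \oplus \widehat{T}^k$ with twisted differential $(x,t) \mapsto (d_X x + g^k(t), d_T t)$; the identity $d^2=0$ is exactly the cocycle condition, and cohomologous cocycles yield equivalent extensions. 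Conversely, the projectivity of each $\widehat{T}^k$ as an $R$-module forces every representation-theoretic extension of $\widehat{T}$ by $\widehat{X}$ to split degree-wise, and any such splitting produces a cocycle. This establishes $\Hom_{\Der R}(\widehat{T}, \widehat{X}[1]) \cong \Ext^1_{\mathrm{Rep}(A(R))}(\widehat{T}, \widehat{X})$.

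For the inductive step ($i \Rightarrow i+1$ with $i \geq 1$), I would use the canonical short exact sequence in $\mathrm{Rep}(A(R))$
\[
0 \to \widehat{K} \to \widehat{P}^0 \to \widehat{T} \to 0, \qquad \widehat{P}^0 = \bigoplus_{j=0}^{n-1} P_{-j} \otimes_R \widehat{T}^{-j},
\]
with $P_{-j}$ the indecomposable projective at vertex $-j$ and $\widehat{P}^0 \to \widehat{T}$ the natural surjection $(x,y) \mapsto x + d_T(y)$ at vertex $-k$. A direct computation of the kernel gives $\widehat{K}^j \cong \widehat{T}^{j-1}$, so $\widehat{K}$ again satisfies the hypothesis of the lemma. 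The key observation is that each summand $P_{-j} \otimes_R \widehat{T}^{-j}$ with $j > 0$ is, viewed as a complex of $R$-modules, the contractible two-term complex $\widehat{T}^{-j} \xrightarrow{\mathrm{id}} \widehat{T}^{-j}$; therefore $\mathbf{\Psi}_R(\widehat{P}^0)$ is quasi-isomorphic to $\widehat{T}^0$ placed in degree $0$. Since $\widehat{X}$ is concentrated in non-positive degrees, this forces $\Hom_{\Der R}(\widehat{P}^0, \widehat{X}[j]) = 0$ for all $j \geq 1$, mirroring the vanishing $\Ext^j_{\mathrm{Rep}(A(R))}(\widehat{P}^0, \widehat{X}) = 0$ ($\widehat{P}^0$ is projective in $\mathrm{Rep}(A(R))$). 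The long exact sequences derived from the associated triangle in $\Der R$ and from the short exact sequence in $\mathrm{Rep}(A(R))$ then give
\[
\Hom_{\Der R}(\widehat{T}, \widehat{X}[i+1]) \cong \Hom_{\Der R}(\widehat{K}, \widehat{X}[i]) \quad\text{and}\quad \Ext^{i+1}_{\mathrm{Rep}(A(R))}(\widehat{T}, \widehat{X}) \cong \Ext^i_{\mathrm{Rep}(A(R))}(\widehat{K}, \widehat{X}),
\]
and the inductive hypothesis applied to $\widehat{K}$ identifies the right-hand sides.

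The main obstacle is the base case $i=1$: the natural comparison map $\Hom_{\mathrm{Rep}(A(R))}(\widehat{T},\widehat{X}) \to \Hom_{\Der R}(\mathbf{\Psi}_R(\widehat{T}),\mathbf{\Psi}_R(\widehat{X}))$ is only a surjection in general (its kernel consisting of null-homotopic chain maps), which is exactly why the lemma would fail at $i=0$. The Yoneda cocycle-to-extension dictionary bypasses this, and it is precisely the degree-wise projectivity of the components $\widehat{T}^k$ as $R$-modules---rather than projectivity of $\widehat{T}$ as a representation---that enables every representation-theoretic extension to split degree-wise and so makes the dictionary bijective.
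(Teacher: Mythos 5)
Your proof is correct. The paper does not actually give a proof of this lemma—it cites \cite[Lemma 2.4]{MV18} and moves on—so there is no in-paper argument to compare against; I am therefore assessing your proposal on its own merits. Your two-step strategy is the standard route to the statement, and I have checked the details: for $i=1$, the identification $\Hom_{\Der R}(\mathbf{\Psi}_R(\widehat{T}),\mathbf{\Psi}_R(\widehat{X})[1]) = H^1\Hom^\bullet_R(T,X)$ is valid because a bounded complex of projectives is K-projective, the differential on $\widehat{Y}_g$ squares to zero exactly by the cocycle condition $d_Xg + gd_T = 0$, cohomologous cocycles yield equivalent extensions, and conversely a degree-wise splitting (available since each $\widehat{T}^k$ is $R$-projective) of any representation extension produces a cocycle; injectivity of the resulting map holds because any equivalence of extensions has matrix form $\left(\begin{smallmatrix}1 & h\\ 0 & 1\end{smallmatrix}\right)$ and forces the two cocycles to differ by $dh$. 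For the inductive step, the kernel indeed satisfies $\widehat{K}^{j}\cong\widehat{T}^{j-1}$, hence stays degree-wise projective and supported in $[-n+2,0]$; the object $\widehat{P}^0$ is projective in $\mathrm{Rep}(A(R))$ because the $\widehat{T}^{-j}$ are projective $R$-modules; and $\mathbf{\Psi}_R(\widehat{P}^0)$ decomposes as $\widehat{T}^0[0]$ plus contractible disk complexes, so $\Hom_{\Der R}(\mathbf{\Psi}_R(\widehat{P}^0),\mathbf{\Psi}_R(\widehat{X})[j])=0$ for $j\geq 1$, mirroring $\Ext^j_{\mathrm{Rep}(A(R))}(\widehat{P}^0,\widehat{X})=0$. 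One small point worth making explicit: chaining the two long exact sequences as you do yields an abstract isomorphism; if one wants it to be the canonical comparison map induced by the exact functor $\mathbf{\Psi}_R: \mathrm{Rep}(A(R)) \to \Der R$, one should remark that this map commutes with both connecting morphisms, which is routine. In any case, the only use of the lemma in this paper (Proposition~\ref{desc-self-ext-orthogonal}) requires only the vanishing equivalence, so the abstract isomorphism suffices.
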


This correspondence is compatible with respect to change of rings. 

\begin{lemma}
Suppose that $\lambda:R\to S$ is a flat morphism of commutative ring. Then $A(S)\cong A(R)\otimes S$ and we have a commutative diagram
$$\xymatrix{\mathrm{Rep}(A(R))\ar[rr]^{-\otimes_R S}\ar[d]_{\mathbf{\Psi}_R} && \mathrm{Rep}(A(S))\ar[d]^{\mathbf{\Psi}_S} \\
\Der R \ar[rr]^{-\lotimes_R S} && \Der S
}
$$
\end{lemma}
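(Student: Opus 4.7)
The plan is to break the statement into two independent assertions and verify each by direct computation. First I would prove the ring isomorphism $A(S) \cong A(R) \otimes_R S$, and then use this (plus flatness) to verify the commutativity of the square.

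For the ring isomorphism, I would exploit the fact that the path algebra $R\mathbb{A}_n$ is a free $R$-module, with basis the set $\mathcal{P}$ of (directed) paths in $\mathbb{A}_n$. This has two consequences: the canonical map $R\mathbb{A}_n \otimes_R S \to S\mathbb{A}_n$ is a well-defined isomorphism of $S$-algebras (multiplication is $R$-bilinear on the free generators), and the submodule $I \subseteq R\mathbb{A}_n$ generated by paths of length two is a pure $R$-submodule (being a direct summand, as $\mathcal{P}$ splits into paths of length $\le 1$ and paths of length $\ge 2$). Hence $I \otimes_R S$ identifies with the analogous ideal in $S\mathbb{A}_n$, and the quotient $A(R) \otimes_R S \cong S\mathbb{A}_n / (I\otimes_R S) \cong A(S)$ carries over as an $S$-algebra isomorphism.

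For the commutativity of the square, I would unravel all four functors on an arbitrary representation $\widehat{X} = (X_{-n+1} \xrightarrow{f_{-n+1}} \cdots \xrightarrow{f_{-1}} X_0) \in \mathrm{Rep}(A(R))$. Going right then down, one obtains the representation $\widehat{X}\otimes_R S = (X_i\otimes_R S, f_i\otimes_R S)$, which $\mathbf{\Psi}_S$ identifies with the complex $[X_{-n+1}\otimes_R S \to \cdots \to X_0\otimes_R S]$ concentrated in degrees $-n+1,\dots,0$. Going down then right, $\mathbf{\Psi}_R(\widehat{X})$ is the bounded complex $[X_{-n+1}\to\cdots\to X_0]$, and since $\lambda$ is flat, the derived tensor product $-\lotimes_R S$ agrees with the termwise ordinary tensor product $-\otimes_R S$ on this bounded complex of $R$-modules. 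The resulting complex is manifestly the same as the one produced via $\mathbf{\Psi}_S$; naturality in morphisms of representations is immediate since both routes apply $-\otimes_R S$ componentwise.

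I do not expect any serious obstacle: once the purity/freeness of $R\mathbb{A}_n$ over $R$ is observed, the ring isomorphism is formal, and the commutativity of the square reduces to the elementary observation that flatness lets the derived extension on a bounded complex be computed naively. The only mild subtlety, which I would make explicit, is the identification of the $R$-linear functor $-\otimes_R S$ on $\mathrm{Rep}(A(R))$ with the $A(R)$-linear extension of scalars $-\otimes_{A(R)}A(S)$ under the equivalence $\mathrm{Rep}(A(R))\simeq \Mod{A(R)}$; this follows from the computation $M\otimes_{A(R)} A(S) \cong M\otimes_{A(R)}(A(R)\otimes_R S) \cong M\otimes_R S$ using the isomorphism of the first step.
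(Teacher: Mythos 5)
Your proof is correct and follows essentially the same route as the paper, which is very terse: the authors simply observe $(R\mathbb{A}_n/I)\otimes_R S\cong (R\mathbb{A}_n\otimes_R S)/(I\otimes_R S)\cong S\mathbb{A}_n/K$ (citing a reference for tensor products of quotients) and dismiss the commutativity of the square as a ``direct computation.'' You supply the details the paper omits --- the freeness of $R\mathbb{A}_n$ over $R$, the fact that $I$ is a direct summand spanned by paths of length at least two (so $I\otimes_R S$ injects without needing flatness), and the explicit identification of the termwise ordinary tensor with $-\lotimes_R S$ on bounded complexes via flatness --- all of which is sound and exactly what the paper implicitly relies on.
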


\begin{proof}
For the first isomorphism we observe that $(R\mathbb{A}_n/I)\otimes_R S\cong (R\mathbb{A}_n\otimes_R S)/(I\otimes_R S)\cong S\mathbb{A}_n/K$, where $K$ is the ideal in $S\mathbb{A}_n$ generated by all paths of length $2$ (see \cite{Con}). The commutativity of the diagram can be checked by direct computations.
\end{proof}

With these preparatory considerations in hand, we are ready to prove the following 

\begin{prop}\label{desc-self-ext-orthogonal}
Let $\lambda:R\to S$ be a faithfully flat morphism of commutative rings.
Suppose that $T$ is a bounded complex of projective $R$-modules that is concentrated in $-n+1,\dots,-1,0$. If $T\lotimes_R S$ is silting then $\Add(T)\subseteq T^{\perp_{>0}}$.  
\end{prop}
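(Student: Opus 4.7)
The plan is to transfer the desired Ext-vanishing through the Marks--Vit\'oria equivalence to a question about $A(R)$-modules, and then to combine the cosilting descent of Theorem \ref{a-cosilting} with the pure-injective duality to reduce the question to showing that a certain Ext class is not merely phantom but actually zero.

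First I would apply Lemma \ref{ext-MV}: since $T$ is a bounded complex of projective $R$-modules, the conclusion $\Add(T)\subseteq T^{\perp_{>0}}$ is equivalent to
\[
\Ext^i_{A(R)}(\widehat T,\widehat T^{(I)})=0\ \text{for all}\ i>0\ \text{and all sets}\ I,
\]
where $\widehat T\in\Mod{A(R)}$ corresponds to $T$ via Marks--Vit\'oria. The same lemma in $\Der S$, combined with $A(S)=A(R)\otimes_R S$ and the silting hypothesis on $T\lotimes_R S$ (which implies $(T\lotimes_RS)^{(I)}\in(T\lotimes_RS)^{\perp_{>0}}$), yields the analogous vanishing $\Ext^i_{A(S)}(\widehat T\otimes_R S,\widehat T^{(I)}\otimes_R S)=0$.

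Next I would use the duality to produce a cosilting complex over $R$. Since $T\lotimes_R S$ is a bounded silting complex in $\Der S$, Theorem \ref{silt-cosilt-dual}(2) identifies $(T\lotimes_R S)^+\cong\RHom_R(S,T^+)$ as a bounded cosilting complex of cofinite type in $\Der S$, so Theorem \ref{a-cosilting} applies to give that $T^+$ is cosilting in $\Der R$. By Lemma \ref{char-silt}, the class ${^{\perp_{>0}}T^+}$ is closed under products, hence contains $T^{+I}=(T^{(I)})^+$, and therefore $\Hom_{\Der R}(T^{+I},T^+[i])=0$ for every $i>0$. For any $\phi\colon T\to T^{(I)}[i]$ the dual $\phi^+\colon T^{+I}[-i]\to T^+$ thus vanishes, and Lemma \ref{dual-phantom} forces $\phi$ to be a phantom in $\Der R$.

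The remaining task -- upgrading ``phantom'' to ``zero'' -- is the main obstacle. My plan is to translate $\phi$ into the Ext class $\xi\in\Ext^i_{A(R)}(\widehat T,\widehat T^{(I)})$ and apply Proposition \ref{phantoms} to the faithfully flat morphism of $R$-central algebras $A(R)\to A(S)=A(R)\otimes_R S$: because $\xi\otimes_R S=0$ is trivially phantom in $\Der{A(S)}$, Proposition \ref{phantoms} returns that $\xi$ is phantom in $\Der{A(R)}$ as well. To conclude $\xi=0$ I would exploit the fact that $\widehat T$ admits an explicit bounded projective resolution over $A(R)$ by direct sums of extended projectives $Q_k^M$ with $M$ projective $R$-modules, which in turn are filtered colimits of finitely generated projective $A(R)$-modules; this writes $\widehat T$ as a homotopy colimit of compact objects of $\Der{A(R)}$. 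The phantom condition annihilates the $\varprojlim$-term in the associated Milnor exact sequence, while Lemma \ref{lemma-zim} combined with the vanishing on the $A(S)$-side and the faithful flatness of $R\to S$ is what one uses to control the $\varprojlim^1$-term; the delicate point is the compatibility of base change with the $\varprojlim^1$ of these specific towers, and this is where the bulk of the technical work lies.
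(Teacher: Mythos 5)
Your first three steps are correct and match observations also implicit in the paper: the translation to $\Ext^i_{A(R)}(\widehat T,\widehat T^{(I)})$ via Lemma~\ref{ext-MV}, the fact that $T^+$ becomes cosilting in $\Der R$ by Theorem~\ref{a-cosilting}, and the deduction (via closure of ${}^{\perp_{>0}}T^+$ under products and Lemma~\ref{dual-phantom}) that every $\phi\colon T\to T^{(I)}[i]$ with $i>0$ is a phantom. However, the crucial final step --- upgrading ``phantom'' to ``zero'' --- is never actually carried out, and the sketched route is not going to close the gap. The obstruction is fundamental: $T$ (equivalently $\widehat T$) is a bounded complex of projectives that are \emph{not} finitely generated, so $T$ is not compact and Lemma~\ref{lemma-zim} does not give you control over $\Hom_{\Der{A(R)}}(\widehat T,-)\otimes_R S$. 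Your plan to use a Milnor sequence runs into exactly this: a phantom map from a (non-compact) homotopy colimit of compacts lives in the $\varprojlim^1$ term, and $\varprojlim^1$ does not commute with $-\otimes_R S$ even for faithfully flat $S$. Knowing the class dies after base change therefore tells you nothing about the $\varprojlim^1$ term over $R$. You flag this yourself (``the delicate point\dots this is where the bulk of the technical work lies''), but as written this is not a proof --- it is the unresolved heart of the problem, and I do not see how the approach can be completed.

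The paper sidesteps the phantom-vs.-zero difficulty entirely by working with cotorsion pairs at the module level. Since $T\lotimes_R S$ is bounded silting, Corollary~\ref{asc-bound-silt} produces a bounded \emph{silting} complex $P\in\Der R$ (concentrated in the same degrees) with $P\lotimes_R S$ equivalent to $T\lotimes_R S$. Via Marks--Vit\'oria, $\widehat P$ is an $(n-1)$-tilting $A(R)$-module, and $\widehat{P}\otimes_R S$ and $\widehat{T\lotimes_R S}$ are equivalent tilting $A(S)$-modules. Proposition~\ref{tilting-alg} then gives faithfully flat descent of the tilting cotorsion pair: $\CK_{\widehat P}=\{X\mid X\otimes_R S\in\CK_{\widehat P\otimes_R S}\}$ and similarly for $\CL$. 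Since $\widehat T^{(I)}\otimes_R S\in\Add(\widehat{T\lotimes_R S})\subseteq\CK_{\widehat P\otimes_RS}\cap\CL_{\widehat P\otimes_RS}$, descent places $\widehat T^{(I)}$ in $\CK_{\widehat P}\cap\CL_{\widehat P}$; as this is a hereditary cotorsion pair, $\Ext^i_{A(R)}(\widehat T,\widehat T^{(I)})=0$ for all $i>0$ follows directly. Note that this gives genuine $\Ext$-vanishing with no phantom detour, precisely because the descent statement in Proposition~\ref{tilting-alg} (ultimately Lemma~\ref{HST20-2.3}, applied to strongly finitely presented generators of the cotorsion pair) works at the level of modules where the Zimmermann-type base-change isomorphism is available for the \emph{compact} generators of $\CK_{\widehat P}$, not for $\widehat T$ itself.
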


\begin{proof}
Since $T\otimes_R S$ is bounded silting, we can use Lemma \ref{transfer-T} and Corollary \ref{asc-bound-silt} to observe that there exists a silting complex $P\in D(R)$ such that $P\lotimes_R S$ is equivalent to $T\lotimes_R S$. Moreover, since $\lambda$ is faithfully flat, we can assume that $P$ is concentrated in $-n+1,\dots,-1,0$. We use \cite[Theorem 2.10]{MV18} to  observe that $\widehat{P}$ is an $(n-1)$-tilting module over $A(R)$, and that $\widehat{P\lotimes_R S}\cong \widehat{P}\otimes_R S$ and $\widehat{T\lotimes_R S}$ are equivalent $(n-1)$-tilting modules over $A(S)$. From Proposition \ref{tilting-alg} it follows that $\widehat{T}^{(I)}\in \CK_{\widehat{P}}\bigcap \CL_{\widehat{P}}$ for all sets $I$. Then $\Ext^{i}_{A(R)}(T,\Add(T))=0$ for all $i>0$. Since $T$ is a complex of projectives, we can use Lemma \ref{ext-MV} to obtain the conclusion. 
\end{proof}
 
 There, we now see that the condition (Sb1) of the characterization of bounded silting complexes from Theorem~\ref{char-silt-bounded} descends along all faithfully flat morphisms. Although we are not able to show an analogous result for (Sb2) in full generality, we shall show that it holds in many situations.  
 
 Recall that a full subcategory $\C$ of $\Der R$ is \textit{localizing} if it is a triangulated subcategory closed under coproducts. If $\C$ is an arbitrary subcategory of $\Der R$ we denote by $\Loc(\C)$ the smallest localizing subcategory of $\Der R$ containing $\C$, and we write just $\Loc(X)$ if $\C = \{X\}$. Recall from \cite[Theorem 3.14]{An-19} that $(\Loc(X),X^{\perp_\Z})$ is a t-structure. Therefore,  $\Loc(X) = \Der R$ if and only if $X$ is a generator of $\Der R$, that is, $X^{\perp_{\Z}} = 0$. We will freely use the fact that any localizing subcategory $\C$ of $\Der R$ is a tensor ideal, meaning that $X \lotimes_R C \in \C$ for any $C \in \C$ and $X \in \Der R$, see \cite[Lemma 1.1.8]{KP}.
 
 \begin{prop}\label{descent-ff-gen}
Let $R$ be a commutative ring and $R \to S$ a faithfully flat homomorphism of commutative rings such that $\Loc(S) = \Der R$. 

Suppose that $T\in \Der R$ is isomorphic to a bounded complex of projective $R$-modules. If $T\lotimes_R S$ is silting in $\Der S$ then $T$ is silting in $\Der R$.
\end{prop}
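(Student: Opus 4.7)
By Theorem~\ref{char-silt-bounded} applied to the bounded complex of projectives $T$, it is enough to verify the two conditions (Sb1) $\Add(T)\subseteq T^{\perp_{>0}}$ and (Sb2) $T^{\perp_{\Z}}=0$ in $\Der R$. Condition (Sb1) is the content of Proposition~\ref{desc-self-ext-orthogonal}, so the whole argument reduces to establishing (Sb2). Recalling from \cite[Theorem 3.14]{An-19} that $(\Loc(T),T^{\perp_{\Z}})$ is a $t$-structure in $\Der R$, (Sb2) is equivalent to the equality $\Loc(T)=\Der R$. Since by hypothesis $\Loc(S)=\Der R$, it will therefore suffice to prove that $S\in\Loc(T)$.

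To exhibit $S$ inside $\Loc(T)$, I would introduce the full subcategory
\[\D=\{Y\in\Der S\mid Y\in\Loc(T)\},\]
where each $Y\in\Der S$ is regarded as an object of $\Der R$ via restriction of scalars along $\lambda$. Since the restriction functor is exact and preserves arbitrary coproducts (indeed it does not change the underlying complex), $\D$ is closed under triangles, shifts and coproducts in $\Der S$, and hence is a localizing subcategory of $\Der S$. The key input is the tensor-ideal property of localizing subcategories of $\Der R$ recalled just before the statement: applied to $\Loc(T)$, which contains $T$, it yields $T\lotimes_{R}S\in\Loc(T)$. Thus $T\lotimes_{R}S\in\D$.

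Now $T\lotimes_{R}S$ is silting in $\Der S$ by assumption, so it generates $\Der S$; equivalently, $\Loc_{S}(T\lotimes_{R}S)=\Der S$. Since $\D$ is a localizing subcategory of $\Der S$ containing $T\lotimes_{R}S$, we conclude $\D=\Der S$, and in particular $S\in\D$, i.e.\ $S\in\Loc(T)$. Combined with the hypothesis $\Loc(S)=\Der R$, this forces $\Loc(T)\supseteq\Loc(S)=\Der R$, which is (Sb2) and finishes the proof.

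The point I expect to be the subtle one is precisely the step $T\lotimes_{R}S\in\Loc(T)$: the naive attempt would be to work Hom-theoretically, starting from $X\in T^{\perp_{\Z}}$ and trying to deduce $X\lotimes_{R}S\in(T\lotimes_{R}S)^{\perp_{\Z}}$ by adjunction, but this would require $\RHom_{R}(T,-)$ to commute with the scalar extension $-\lotimes_{R}S$, which is not available because $T$ need not be compact in $\Der R$. Passing to the localizing subcategory $\Loc(T)$ and exploiting its tensor-ideal structure sidesteps this obstruction completely; the remaining machinery is just the standard bookkeeping above.
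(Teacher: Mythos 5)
Your proof is correct and follows essentially the same route as the paper: reduce to (Sb1) and (Sb2) via Theorem~\ref{char-silt-bounded}, obtain (Sb1) from Proposition~\ref{desc-self-ext-orthogonal}, then for (Sb2) use the tensor-ideal property of $\Loc(T)$ to get $T\lotimes_R S\in\Loc(T)$ and deduce $S\in\Loc(T)$ because $T\lotimes_R S$ generates $\Der S$. The only difference is cosmetic: the paper states the passage from ``$T\lotimes_R S\in\Loc(T)$ and $T\lotimes_R S$ generates $\Der S$'' to ``$S\in\Loc(T)$'' in one line with a citation, whereas you spell it out by introducing the localizing subcategory $\D=\{Y\in\Der S : Y|_R\in\Loc(T)\}$ of $\Der S$ and observing it contains $T\lotimes_R S$, which is a nice way to make that step airtight.
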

 \begin{proof}
By Proposition~\ref{desc-self-ext-orthogonal}, we already know that (Sb1) from Theorem~\ref{char-silt-bounded} holds for $T$. It remains to show that (Sb2), that is, we need to check $\Loc(T) = \Der R$. We have $T \lotimes_R S \in \Loc(T)$ and  by the assumption, $T \lotimes_R S$ is a silting in $\Der S$. It follows that $S \in \Loc(T)$ (this is clear since $T \lotimes_R S$ satisfies condition (S2) of \cite[Definition 5.1]{An-19} thanks to \cite[Proposition 5.3]{An-19}), and therefore $\Loc(T) = \Der R$ by the assumption we made on $S$.
\end{proof}

Using Corollary~\ref{Corollary 3.6-PoS} we obtain 

\begin{cor}\label{tilt-descent-ff-gen}
Let $R$ be a commutative ring and $R \to S$ a faithfully flat homomorphism of commutative rings such that $\Loc(S) = \Der R$. 
If $T\in \ModR$ is a module such that $T\otimes_R S\in \Mod S$ is $n$-tilting then $T$ is $n$-tilting. 
\end{cor}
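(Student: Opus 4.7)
The plan is to reduce the corollary to Proposition~\ref{descent-ff-gen} combined with the characterization of $n$-tilting modules given in Corollary~\ref{Corollary 3.6-PoS}. The only content that is not immediate from these two results is the descent of finite projective dimension, which I would establish first as a preliminary step.

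First I would show that $T$ has projective dimension at most $n$ over $R$. Pick any projective resolution $\cdots \to P^{-n} \to P^{-n+1} \to \cdots \to P^0 \to T \to 0$ in $\Mod R$ and let $K$ be the $n$-th syzygy $\ker(P^{-n+1} \to P^{-n+2})$. Since $\lambda\colon R \to S$ is flat, applying $-\otimes_R S$ to this resolution produces a projective resolution of $T \otimes_R S$ over $S$ whose $n$-th syzygy is $K \otimes_R S$. The hypothesis that $T \otimes_R S$ is $n$-tilting forces $\mathrm{pdim}_S(T \otimes_R S) \leq n$, so $K \otimes_R S$ is projective over $S$. Projectivity is an ad-property along faithfully flat ring extensions by the Raynaud--Gruson theorem, hence $K$ is projective over $R$ and $\mathrm{pdim}_R T \leq n$. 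In particular, $T$ is isomorphic in $\Der R$ to the bounded complex $0 \to P^{-n} \to \cdots \to P^0 \to 0$ of projectives.

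Next, because $\lambda$ is flat we have $T \lotimes_R S \cong T \otimes_R S$, and this is an $n$-tilting module over $S$, hence in particular a bounded silting complex in $\Der S$. Proposition~\ref{descent-ff-gen} now applies and produces that $T$ is silting in $\Der R$. By Theorem~\ref{char-silt-bounded}, this silting property translates into the two conditions $\Add(T) \subseteq T^{\perp_{>0}}$ and $T^{\perp_{\Z}} = 0$. Combining these with the projective dimension bound established in the preceding step, the three hypotheses of Corollary~\ref{Corollary 3.6-PoS} are all satisfied, and we conclude that $T$ is $n$-tilting.

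The only non-formal ingredient is the descent of finite projective dimension, which is where the main subtlety lies; however, this step is really just an invocation of Raynaud--Gruson, so no serious obstacle arises and the corollary follows by assembling the cited results.
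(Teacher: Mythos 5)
Your proof is correct and follows the same route the paper intends: combine Proposition~\ref{descent-ff-gen} with Corollary~\ref{Corollary 3.6-PoS}. The only nontrivial detail you add explicitly --- the descent of the bound $\mathrm{pdim}_R T \leq n$ via Raynaud--Gruson on the $n$-th syzygy --- is exactly the argument of Lemma~\ref{bounded-local}, which the paper records a few lines later; you are right that it is needed both to legitimize the use of Proposition~\ref{descent-ff-gen} (which presupposes $T$ is a bounded complex of projectives) and to supply hypothesis (i) of Corollary~\ref{Corollary 3.6-PoS}.
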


\begin{rem}
Following \cite{PS18}, a silting object $T$ is called a \emph{tilting object} if in addition we have $\Add(T) \subseteq T^{\perp_{<0}}$. If $T \in \Der R$ is bounded silting, then $T$ is tilting precisely if the realization functor from the bounded derived category of the heart of the silting t-structure to the bounded derived category of $R$ is an equivalence \cite[Corollary 5.2]{PS18}. Any $n$-tilting module is a tilting object as an object of the derived category.
 
 We don't know if the descent (or even the ascent) properties we established for bounded silting complexes restrict to tilting complexes, in general. In particular, our approach which used the transfer to the category $\mathrm{Rep}(A(R))$ does not offer information about the groups $\Hom(T, T^{(I)}[k])$ for $k<0$. 
 \end{rem}
 
 An important example of a faithfully flat morphism is the so-called \textit{Zariski cover}, that is, a ring morphism of the form $R \to \prod_{i=0}^{n-1} R[f_i^{-1}]$ for some finite generating set $f_0,f_1,\ldots,f_{n-1}$ of the regular module $R$, also see \S\ref{subsec-locality}.

 \begin{lemma}\label{zariski-cover}
Let $R \to S$ be a Zariski cover of $R$, then $\Loc(S) = \Der R$.
 \end{lemma}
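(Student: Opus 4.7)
The plan is to show $R \in \Loc(S)$, from which the conclusion follows immediately since $R$ is a (compact) generator of $\Der R$. To realize $R$ inside $\Loc(S)$, I would build an explicit finite resolution of $R$ by objects of $\Loc(S)$ using the augmented \v{C}ech complex associated to the generating family $f_0, \ldots, f_{n-1}$.

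First I would write down the augmented \v{C}ech complex
\[
0 \to R \to \prod_{i} R[f_i^{-1}] \to \prod_{i<j} R[(f_if_j)^{-1}] \to \cdots \to R[(f_0f_1\cdots f_{n-1})^{-1}] \to 0
\]
and recall the standard fact that this sequence is exact whenever $(f_0,\ldots,f_{n-1}) = R$. The quickest verification is to localize at an arbitrary prime $\mathfrak{p} \subseteq R$: since not all $f_i$ lie in $\mathfrak{p}$, some $f_i$ becomes a unit in $R_\mathfrak{p}$, and the localized \v{C}ech complex becomes contractible in the usual way (homotopy built from division by that $f_i$). Exactness on stalks then gives exactness globally.

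Next, I would argue that every term $R[(f_{i_0}f_{i_1}\cdots f_{i_k})^{-1}]$ lies in $\Loc(S)$. Indeed, since $S = \prod_{i<n} R[f_i^{-1}]$ is a \emph{finite} product, each factor $R[f_i^{-1}]$ is a direct summand of $S$, hence belongs to $\Loc(S)$. Because localizations are flat, we have
\[
R[(f_{i_0}\cdots f_{i_k})^{-1}] \;\cong\; R[f_{i_0}^{-1}] \lotimes_R \cdots \lotimes_R R[f_{i_k}^{-1}],
\]
and the tensor-ideal property of localizing subcategories recalled before the statement shows that these iterated derived tensor products remain in $\Loc(S)$. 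Finite products of such terms are again in $\Loc(S)$, so every term of the \v{C}ech complex lies in $\Loc(S)$.

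Finally, the exactness of the augmented complex means that $R$ is quasi-isomorphic in $\Der R$ to the unaugmented \v{C}ech complex, which is a bounded complex whose entries all lie in $\Loc(S)$. Using the brutal truncations to filter it by $n$ exact triangles with entries in $\Loc(S)$, we conclude $R \in \Loc(S)$. Since $R$ is a compact generator of $\Der R$, this forces $\Loc(S) = \Der R$. The only genuinely non-formal step is the exactness of the augmented \v{C}ech complex, but this is the classical Serre vanishing for quasi-coherent sheaves on an affine scheme and is established by the stalk-wise contraction argument above.
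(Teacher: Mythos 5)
Your proof is correct, and it is essentially the argument one would expect behind the paper's citation of \cite[Lemma 4.1]{HST20}: the augmented \v{C}ech complex attached to a generating family $f_0,\dots,f_{n-1}$ is exact, each of its terms $R[(f_{i_0}\cdots f_{i_k})^{-1}]$ is an iterated (derived) tensor product of direct summands of $S$ and hence lies in the tensor ideal $\Loc(S)$, and filtering the bounded complex by brutal truncations puts $R$ itself in $\Loc(S)$, which forces $\Loc(S)=\Der R$ since $R$ is a compact generator. The only cosmetic remark is that invoking ``Serre vanishing'' is a slight overstatement of what is needed; the stalk-wise contraction you describe (some $f_i$ becomes a unit after localizing at any prime) is the elementary and entirely sufficient justification.
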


 \begin{proof}
    This is essentially proved in \cite[Lemma 4.1]{HST20}.
 \end{proof}
 For the definition of a local property and an ad-property, see \S\ref{subsec-locality}.
 \begin{lemma}\label{bounded-local}
    The property of ${X} \in \Der R$ of being isomorphic to a bounded complex of projectives is an ad-property.
 \end{lemma}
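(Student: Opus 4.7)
The plan is to verify separately the two conditions of being an ad-property. For ascent, suppose $X \cong P^\bullet$ in $\Der R$ with $P^\bullet$ a bounded complex of projective $R$-modules, and let $R \to S$ be any flat ring morphism. Since each $P^i$ is flat, the derived tensor product coincides with the ordinary one, giving $X \lotimes_R S \cong P^\bullet \otimes_R S$, a bounded complex in which each term $P^i \otimes_R S$ is projective as an $S$-module.

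For descent, let $R \to S$ be faithfully flat and assume $X \lotimes_R S \cong Q^\bullet$ in $\Der S$, where $Q^\bullet$ is a bounded complex of projective $S$-modules concentrated in some interval $[a',b']$. First observe that $X$ has bounded cohomology: flatness of $S$ gives $H^i(X) \otimes_R S \cong H^i(X \lotimes_R S) \cong H^i(Q^\bullet)$, and faithful flatness transfers the vanishing outside $[a',b']$ back to $H^i(X)$. Fix a bounded-above $K$-projective resolution $P^\bullet \to X$ in $\Der R$ with $P^i = 0$ for $i > b'$; tensoring with $S$ (using flatness of each $P^i$) yields a bounded-above projective $S$-resolution $P^\bullet \otimes_R S \to X \lotimes_R S \cong Q^\bullet$.

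The crucial step is to produce a projective syzygy. Since $Q^\bullet$ is a bounded complex of projectives, the mapping cone of $P^\bullet \otimes_R S \to Q^\bullet$ is an acyclic bounded-above complex of projective $S$-modules, hence contractible. In the range where both $Q^{-N-1}$ and $Q^{-N}$ vanish, i.e., for $N > -a'$, the cone reduces in those two degrees to a shift of $P^{-N} \otimes_R S \to P^{-N+1} \otimes_R S$, so the contracting homotopy exhibits $\ker(P^{-N}\otimes_R S \to P^{-N+1}\otimes_R S)$ as a direct summand of $P^{-N}\otimes_R S$, hence a projective $S$-module. By flatness of $S$ over $R$ this syzygy equals $K \otimes_R S$, where $K := \ker(P^{-N}\to P^{-N+1})$. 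Invoking the Raynaud--Gruson theorem on the faithfully flat descent of projectivity for modules \cite{RG}, we conclude that $K$ is a projective $R$-module.

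The same choice $N > -a'$ also ensures $H^i(X) = 0$ for all $i \leq -N$. I would finish by exhibiting the bounded complex $P'^\bullet = (0 \to K \to P^{-N} \to P^{-N+1} \to \cdots \to P^{b'} \to 0)$, with $K$ in degree $-N-1$, the inclusion $K \hookrightarrow P^{-N}$ as differential at that spot, and the remaining differentials inherited from $P^\bullet$. The composite $K \hookrightarrow P^{-N} \to P^{-N+1}$ vanishes by definition of $K$, so $P'^\bullet$ is a bounded complex of projective $R$-modules. The natural chain map $P^\bullet \to P'^\bullet$, given by the identity in degrees $\geq -N$, the factored differential $P^{-N-1} \twoheadrightarrow K$ (which lands in $K$ because $P^{-N-1}\to P^{-N}\to P^{-N+1}$ is zero) in degree $-N-1$, and zero in lower degrees, is a quasi-isomorphism by direct cohomology computation, giving $X \cong P'^\bullet$ in $\Der R$. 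The main obstacle is the Raynaud--Gruson invocation to pull projectivity back across $R \to S$; the remaining steps are standard manipulations with bounded-above projective resolutions and their truncations.
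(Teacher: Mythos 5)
Your proof is correct and follows essentially the same strategy as the paper: deduce boundedness of cohomology by faithful flatness, pass to a bounded-above projective resolution, produce a projective syzygy after base change via the contractibility of an acyclic bounded-above complex of projectives, descend projectivity of that syzygy via Raynaud--Gruson, and truncate. The only cosmetic difference is that the paper uses the soft truncation $\tau^{\geq -n}P$ (with the projective cokernel of a differential as the new leftmost term), whereas you splice in the projective kernel $K = \ker(d^{-N})$ one degree lower; both produce a bounded complex of projectives quasi-isomorphic to $X$.
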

 \begin{proof}
    Clearly, if $X$ is a bounded complex of projective $R$-modules then $X \otimes_R S$ is a bounded complex of projective $S$-modules for any flat ring morphism $R \to S$.
 
    Let $R \to S$ be faithfully flat and assume without loss of generality that $X \otimes_R S$ is a complex of projective $S$-modules concentrated in degrees $-n,-n+1,\ldots,0$. Since $R \to S$ is faithfully flat and $X \otimes_R S$ is bounded, we see immediately that homology of $X$ vanishes outside of degrees $-n,-n+1,\ldots,0$. Therefore, there is a complex $P$ of projective $R$-modules concentrated in non-positive degrees which is isomorphic to $X$ in $\Der R$. Since $P \otimes_R S$ is a complex of projective $S$-modules isomorphic in $\Der S$ to $X \otimes_R S$. By the assumption on $X \otimes_R S$, the cokernel of the differential map $X^{-n-1} \otimes_R S \to X^{-n} \otimes_R S$, which is the same as $\Coker(X^{-n-1} \to X^{-n}) \otimes_R S$, is a projective $S$-module. Then by the result of Raynaud and Gruson \cite[Theorem 95.5]{stacks}, the cokernel of the differential $X^{-n-1} \to X^{-n}$ is a projective $R$-module. It follows that the soft truncation $\tau^{\geq -n}X$, which is quasi-isomorphic to $X$, is a bounded complex of projective $R$-modules.
 \end{proof}
 \begin{cor}\label{silt-zar-loc}
The property of an object $T \in \Der R$ being bounded silting is (Zariski) local.
 \end{cor}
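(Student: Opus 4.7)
The plan is to verify the two hypotheses of the Affine Communication Lemma (Lemma~\ref{affine-communication}) for the property $\mathfrak{P}$ of being a bounded silting complex. In fact, we will check the stronger ad-property conditions, which immediately imply locality.

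For the ascent part, let $T \in \Der R$ be bounded silting and let $R \to S$ be any flat ring morphism (in particular, any localization $R \to R[f^{-1}]$). Applying Theorem~\ref{ascend}(I) with $A = R$ and $B = S$, we obtain that $T \lotimes_R S$ is silting in $\Der S$, and part (4) of that theorem preserves boundedness. So condition (1) of Lemma~\ref{affine-communication} holds.

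For the descent part, let $f_0, f_1, \ldots, f_{n-1} \in R$ generate $R$ as an ideal and put $S = \prod_{i<n} R[f_i^{-1}]$, so that $R \to S$ is the associated Zariski cover, which is faithfully flat. Suppose $T \in \Der R$ and assume $T \lotimes_R S$ is bounded silting in $\Der S$. First, by Lemma~\ref{bounded-local}, being isomorphic to a bounded complex of projectives is itself an ad-property, so from the boundedness of $T \lotimes_R S$ as a complex of projectives we deduce that $T$ is isomorphic to a bounded complex of projective $R$-modules. Next, by Lemma~\ref{zariski-cover}, the Zariski cover satisfies $\Loc(S) = \Der R$. Now Proposition~\ref{descent-ff-gen} applies verbatim and yields that $T$ is silting in $\Der R$; since $T$ is already bounded as a complex of projectives, $T$ is in fact bounded silting. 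This establishes condition (2) of Lemma~\ref{affine-communication}, and the conclusion follows.

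There is no real obstacle here: all the technical work has been carried out in the preceding results (Theorem~\ref{ascend}, Proposition~\ref{descent-ff-gen}, Lemma~\ref{bounded-local}, and Lemma~\ref{zariski-cover}), and the corollary is essentially a packaging of these ingredients through the Affine Communication Lemma.
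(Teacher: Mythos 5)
Your proof is correct and follows essentially the same route as the paper: both verify the two conditions of the Affine Communication Lemma using Theorem~\ref{ascend} for ascent, and Lemma~\ref{bounded-local}, Lemma~\ref{zariski-cover}, and Proposition~\ref{descent-ff-gen} for descent along a Zariski cover. Your write-up is merely a more explicit unpacking of the paper's terse one-line proof (which, in fact, implicitly relies on Proposition~\ref{descent-ff-gen} without naming it).
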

 \begin{proof}
  Combine Theorem~\ref{ascend}, Lemma~\ref{zariski-cover} and Lemma~\ref{bounded-local},  and apply the Affine Communication Lemma \ref{affine-communication}.
 \end{proof}
 
 Denote by $\I$ the full subcategory of all pure-injective objects of $\Der R$.
 \begin{thm}\label{loc-pure-inj}
    Let $R$ be a commutative ring with the property that $\Loc(\I) = \Der R$. Then any faithfully flat ring homomorphism $R \to S$ of commutative rings has the property that $\Loc(S) = \Der R$.
 \end{thm}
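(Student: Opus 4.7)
My plan is to show $\I\subseteq\Loc(S)$; the hypothesis $\Loc(\I)=\Der R$ then yields $\Der R=\Loc(\I)\subseteq\Loc(S)$.

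Since $\lambda\colon R\to S$ is faithfully flat, it is a pure monomorphism of $R$-modules, so the resulting short exact sequence $0\to R\to S\to K\to 0$ is pure, with $K=S/R$ a flat $R$-module. The corresponding triangle $R\to S\to K\xrightarrow{\partial}R[1]$ in $\Der R$ is pure, i.e.\ $\partial$ is a phantom. For any $E\in\Der R$, derived-tensoring yields a distinguished triangle
\[
E\to E\lotimes_R S\to E\lotimes_R K\xrightarrow{\id_E\lotimes\partial}E[1],
\]
and the main technical point is that this tensored triangle is still pure.

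To prove $\id_E\lotimes\partial$ is phantom I take an arbitrary compact $C\in\Der R$ and use the identification $\RHom_R(C,X)\cong C^{*}\lotimes_R X$ valid for compact $C$. The induced map $\Hom(C,E\lotimes_R K)\to\Hom(C,E[1])$ is then identified with the degree-zero connecting morphism $\delta^0$ in the long exact sequence of the triangle $W\to W\lotimes_R S\to W\lotimes_R K\to W[1]$, where $W:=C^{*}\lotimes_R E$. Since $R$, $S$, and $K$ are flat $R$-modules, one has $H^n(W\lotimes_R X)\cong H^n(W)\otimes_R X$ for each $X\in\{R,S,K\}$; pureness of $\lambda$ then guarantees that $0\to H^n(W)\to H^n(W)\otimes_R S\to H^n(W)\otimes_R K\to 0$ is short exact for every $n$. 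Surjectivity at the right end of this SES forces every connecting map $\delta^n$ in the LES to vanish, so in particular $\Hom(C,\id_E\lotimes\partial)=\delta^0=0$.

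With the tensored triangle now known to be pure, pure-injectivity of $E\in\I$ (together with the fact that shifts of phantoms are again phantoms) makes $\Hom(E\lotimes_R S,E)\to\Hom(E,E)$ surjective, so a lift of $\id_E$ yields a retraction $E\lotimes_R S\to E$; hence $E$ is a direct summand of $E\lotimes_R S$. Moreover $E\lotimes_R S\in\Der S$, and since the restriction functor $G\colon\Der S\to\Der R$ is triangulated, coproduct-preserving, and sends $S\in\Der S$ to $S\in\Loc(S)$, the preimage $G^{-1}(\Loc(S))$ is a localizing subcategory of $\Der S$ containing $S$, and so equals $\Der S$. Therefore $E\lotimes_R S\in\Loc(S)$ and $E\in\Loc(S)$, which establishes $\I\subseteq\Loc(S)$ and completes the argument. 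The main obstacle is the purity claim for the tensored triangle; my approach sidesteps having to invoke the general fact that phantoms form a tensor ideal in $\Der R$ by exploiting the very specific structure of the pure short exact sequence $0\to R\to S\to K\to 0$—flatness of all three modules reduces the purity question to the explicit LES computation above.
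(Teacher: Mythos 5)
Your proof is correct and takes essentially the same route as the paper: you show $\I\subseteq\Loc(S)$ by exhibiting each pure-injective $E$ as a direct summand of $E\lotimes_R S$, which lies in $\Loc(S)$, and then invoke the hypothesis. The paper handles the key technical step — that $E\to E\lotimes_R S$ is a pure monomorphism in $\Der R$ — by citing an earlier lemma, whereas you unpack it into a self-contained long-exact-sequence computation via the compact-object identification $\RHom_R(C,-)\cong C^{*}\lotimes_R-$; this is a welcome clarification, though it silently uses that $K=S/R$ is flat (which does hold: $S$ is flat and the sequence is pure, so $\Tor_1^R(-,K)=0$), and you should say so, since you rely on $H^n(W\lotimes_R K)\cong H^n(W)\otimes_R K$. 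Your justification that $E\lotimes_R S\in\Loc(S)$ via the restriction functor is also fine; the paper instead appeals to localizing subcategories of $\Der R$ being tensor ideals.
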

 \begin{proof}
    Recall from \cite[Lemma 35.4.8.]{stacks} that the map $\lambda: R \to S$ is a pure monomorphism of $R$-modules. It follows that for any object $X \in \Der R$ the morphism $X \lotimes_R \lambda: X \to X \lotimes_R S$ is a pure monomorphism in $\Der R$ by \cite[Lemma 2.6]{AH21}. If $X$ is pure-injective, then $X \lotimes_R \lambda$ is a split monomorphism and so $X$ is a direct summand in $X \lotimes_R S$. Since $X \lotimes_R S \in \Loc(S)$, we have $X \in \Loc(S)$. Then $\I \subseteq \Loc(S)$ and so $\Loc(S) = \Der R$.
 \end{proof}
 \begin{rem}\label{list-descend}
 The condition $\Loc(\I) = \Der R$ holds in the following hypotheses:
    \begin{itemize}
        \item $R$ noetherian, this follows from \cite[Theorem 3.3]{Rick},
        \item $R$ admits a finite injective resolution, in particular, if $R$ has finite global dimension,
        \item more generally, if $R$ is isomorphic to a bounded complex of pure-injectives, in particular, if $R$ has finite pure global dimension, in particular, if $R$ is countable or of cardinality $\aleph_n$ for some $n > 0$ \cite[Proposition 10.5]{GJ}, 
        \item if $\Der R$ is a compactly generated triangulated category of finite pure global dimension in the sense of \cite{Bel}, in particular, if $\Der R$ satisfies the ``Brown representability for homology of morphisms``, see \cite{CKN}.
        \item if $R$ is such that every localizing subcategory of $\Der R$ is cohomological in the sense of Krause, see e.g. \cite[\S 3.2]{KS}.
    \end{itemize}
    We also remark that $\Loc(S) = \Der R$ where $R \to S$ is a faithfully flat homomorphism of commutative rings such that $S$ is of projective dimension at most $1$ over $R$ and such that the projective dimension of flat $R$-modules are bounded by some positive integer is proved in \cite[Proposition 4.2]{CI}.
\end{rem} 

\begin{rem}
Combining the above, we see that if $\Loc(\I) = \Der R$ holds for {\em all} commutative rings then the property of being a bounded silting object is even an ad-property.
We do not know any ring for which $\Loc(\I) = \Der R$ fails. An example for which injective modules do not generate $\Der R$ is in \cite{Rick}. If $R$ is von Neumann regular that $\Loc(\I) = \Der R$ precisely if injectives generate in the sense of \cite{Rick}.
 \end{rem}
 
 \section*{Acknowledgements}
The research of S.\ Breaz and G.-C. Modoi is supported by a grant of the Ministry of Research, Innovation and Digitization, CNCS/CCCDI--UEFISCDI, project number PN-III-P4-ID-PCE-2020-0454, within PNCDI III. 

The research of M. Hrbek is supported by the GAČR project 20-13778S and RVO: 67985840.

\end{document}